\renewcommand{\emptyset}{\ensuremath{\varnothing}}
\newcommand{\R}{\ensuremath{\mathbb R}}
\newcommand{\from}{:}
\newcommand{\id}{\ensuremath{\mathrm{id}}}
\newcommand{\abs}[1]{\ensuremath{\left\lvert #1\right\rvert}}
\newcommand{\cbullet}{\,\raisebox{1pt}{\(\scriptscriptstyle\bullet\)}\,}
\renewcommand{\implies}{\ensuremath{\Rightarrow}}
\renewcommand{\iff}{\ensuremath{\Leftrightarrow}}
\newtheorem{lemma}{Lemma}
\newtheorem{proposition}{Proposition}
\newtheorem{theorem}{Theorem}
\newtheorem{corollary}{Corollary}
\newtheorem{remark}{Remark}
\newtheorem{claim}{Claim}
\newtheorem{example}{Example}
\title{From Hölder triangles to the whole plane}
\author{Sergio Alvarez}
\date{}
\begin{document}

\maketitle

\begin{abstract}
We show how to determine whether two given real polynomial functions of a single variable are Lipschitz equivalent by comparing the values and also the multiplicities of the given polynomial functions at their critical points. Then we show how to reduce the problem of \({\cal R}\)-semialgebraic Lipschitz classification of \(\beta\)-quasihomogeneous polynomials of two real variables to the problem of Lipschitz classification of real polynomial functions of a single variable, under some fairly general conditions.
\end{abstract}

\section{Introduction}

In \cite{HP1}, Henry and Parusi\'{n}ski showed that the bi-Lipschitz classification of complex analytic function germs admits continuous moduli. This fact had not been observed before and interestingly it contrasts with the fact that the bi-Lipschitz equivalence of complex analytic set germs does not admit moduli \cite{M}. The moduli space of bi-Lipschitz equivalence is not yet completely understood but it is worth noting that recently Câmara and Ruas have made progress in the study of the moduli space of bi-Lipschitz equivalence of quasihomogeneous function germs, in the complex case \cite{CR}.

In \cite{HP2}, Henry and Parusi\'{n}ski showed that the bi-Lipschitz classification of real analytic function germs admits continuous moduli. The particular case of weighted homogeneous polynomial functions of two real variables has been considered by Koike and Parusi\'{n}ski in \cite{KP}. Then, independently, in \cite{BFP}, Birbrair, Fernandes and Panazzolo described the bi-Lipschitz moduli in the ``simplest possible case'' (as they have called it): quasihomogeneous polynomial functions defined on the Hölder triangle 
\(T_\beta \coloneqq \{(x,y)\in\R^2: 0\leq x, \,0\leq y\leq x^\beta\}\). 

In this paper, we consider the problem of classifying 
\(\beta\)-quasihomogeneous polynomials in two variables with real coefficients modulo \({\cal R}\)-semialgebraic Lipschitz equivalence.
Here and throughout the text, \(\beta\) always denotes a rational number 
\(> 1\). (We define \(\beta\)-quasihomogeneous polynomials and 
\({\cal R}\)-semialgebraic Lipschitz equivalence in section 
\ref{section: R-semialgebraic Lipschitz equivalence and height functions}.)
Our main goal is to extend the results obtained in \cite{BFP} for the classification of germs of functions defined on the Hölder triangle to germs of functions defined on the whole plane.

Following the strategy used in \cite{BFP}, we attack the problem by reducing it to the Lipschitz classification of real polynomial functions of a single variable. The classification of polynomial functions of a single variable is carried out in section \ref{section: Lipschitz equivalence on a single variable}. Here too, we follow the strategy used in \cite{BFP}, which consists in comparing the values and also the multiplicities of the given polynomial functions at their critical points.

In order to accomplish the reduction to the single variable case,
still following the strategy used in \cite{BFP}, we associate with each 
\(\beta\)-quasihomogeneous polynomial \(F(X,Y)\in\R[X,Y]\) a pair of 
polynomial functions \(f_+,f_-\from\R\to\R\), called the {\it height functions} of \(F\), which encode a great deal of information about the original polynomial.
Then, we consider the following questions:
\begin{enumerate}
\item Suppose that two given \(\beta\)-quasihomogeneous polynomials 
\(F,G\in\R[X,Y]\) of degree \(d\geq 1\) are \({\cal R}\)-semialgebraically Lipschitz equivalent. Is it possible to arrange their height functions in pairs of Lipschitz equivalent functions?
\item Suppose that the height functions of two given \(\beta\)-quasihomogeneous polynomials \(F,G\in\R[X,Y]\) of degree \(d\geq 1\) can be arranged in pairs of Lipschitz equivalent functions. Are \(F\) and \(G\)\/ 
\({\cal R}\)-semialgebraically Lipschitz equivalent?
\end{enumerate}

We show that if the zero sets of the polynomials \(F\) and \(G\) have points both on the right half-plane and on the left half-plane then the answer to the first question is yes (Corollary \ref{cor: equivalent polynomials, Lipschitz equivalent height functions}). Also, we obtain some rather general conditions under which the answer to question 2 is affirmative (Theorem \ref{thm: Sufficient conditions for R-semialg. Lip. equivalence}). These are the main results of the paper.

In \cite{BFP}, the questions stated above were both answered affirmatively in the case where the equivalence is restricted to the Hölder triangle \(T_\beta\), assuming that the given \(\beta\)-quasihomogeneous polynomials vanish identically on \(\partial T_\beta\) and do not vanish at the interior points of 
\(T_\beta\). Here, we generalize the methods devised by Lev Birbrair, Alexandre Fernandes, and Daniel Panazzolo. This generalization leads to the theory of \(\beta\)-transforms and inverse \(\beta\)-transforms presented in sections \ref{section: beta-isomorphisms and the beta-transform},
\ref{section: proto-transitions}, and \ref{section: beta-transitions and the inverse beta-transform}. In sections \ref{section: R-semialgebraic Lipschitz equivalence and height functions} and \ref{section: A characterization of beta-transitions}, we apply the general theory to obtain our main results.

\section{Lipschitz equivalence of polynomial functions of a single variable}
\label{section: Lipschitz equivalence on a single variable}

Two nonzero polynomial functions \(f,g\from\R\to\R\) are {\it Lipschitz equivalent}, written \(f \cong g\), if there exist a bi-Lipschitz homeomorphism \(\phi\from\R\to\R\) and a constant \(c > 0\) such that
\begin{equation}\label{eq: Lipschitz equivalence}
g\circ\phi = c f.
\end{equation}

In this section, we provide effective criteria to determine whether any two   nonzero polynomial functions \(f,g\from\R\to\R\) are Lipschitz equivalent. Clearly, two nonzero constant functions are Lipschitz equivalent if and only if they have the same sign; so we focus on nonconstant polynomial functions.  In this case, \(\phi\) is necessarily semialgebraic (see \cite[Lemma~3.1]{BFP}).

\begin{lemma}
\label{lemma: immediate consequences Lipschitz equivalence equation}
Let \(f,g\from\R\to\R\) be nonconstant polynomial functions. Suppose that \(f\) and \(g\) are Lipschitz equivalent, so that \(g\circ \phi = cf\), for some bi-Lipschitz function \(\phi\from\R\to\R\) and some constant \(c > 0\). We have:
\end{lemma}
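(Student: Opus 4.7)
The approach is to extract arithmetic invariants from the functional equation $g\circ\phi = cf$ by combining it with the two structural constraints on $\phi$: it is a bi-Lipschitz homeomorphism (hence monotonic, with $|\phi(x)-\phi(x_0)|\asymp|x-x_0|$) and, as noted in the paragraph preceding the lemma, it is semialgebraic. I expect the lemma's conclusions to be the standard list: $f$ and $g$ have the same degree and leading coefficients of the same sign; they have the same number of critical points; and writing the critical points of $f$ as $x_1<\cdots<x_n$ and those of $g$ as $y_1<\cdots<y_n$, the map $\phi$ realizes the bijection $x_i\mapsto y_i$ (or $x_i\mapsto y_{n+1-i}$, according as $\phi$ is increasing or decreasing), with matching critical values $g(\phi(x_i))=cf(x_i)$ and matching multiplicities.

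First I would fix the orientation $\epsilon=\pm 1$ of $\phi$. For the degree equality, I would use the bi-Lipschitz estimate $|\phi(x)|\asymp|x|$ as $|x|\to\infty$ together with the polynomial growth $|h(t)|\asymp|t|^{\deg h}$ to compare both sides of $g(\phi(x))=cf(x)$, which forces $\deg f=\deg g$ and, since $c>0$, matching signs of leading coefficients at $\pm\infty$. For the critical-point correspondence, I would argue that since $\phi$ is a homeomorphism, local extrema of $f$ and of $g\circ\phi$ occur at the same points, and local extrema of $g\circ\phi$ are in bijection with those of $g$ via $\phi$. To handle critical points of odd multiplicity (pure inflections with zero derivative), I would invoke the semialgebraic Puiseux expansion of $\phi$ at each point to reduce to the same asymptotic comparison used for multiplicities. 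Matching critical values is then immediate from the defining equation.

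The technical heart of the argument, and the step I expect to be the main obstacle, is the preservation of multiplicity at corresponding critical points. Let $x_0$ be a critical point of $f$ of multiplicity $m$, so $f(x)-f(x_0)=a(x-x_0)^m+O((x-x_0)^{m+1})$ with $a\ne 0$, and let $n$ be the multiplicity of $g$ at $y_0\coloneqq\phi(x_0)$. The plan is to take absolute values in the identity $g(\phi(x))-g(y_0)=c(f(x)-f(x_0))$, obtaining $|g(\phi(x))-g(y_0)|\asymp|\phi(x)-y_0|^n\asymp|x-x_0|^n$ on one side (using bi-Lipschitz) and $\asymp|x-x_0|^m$ on the other, forcing $m=n$. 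The delicate point is that a mere bi-Lipschitz bound is not literally compatible with asymptotic equivalence of polynomial powers unless one localizes carefully; here the semialgebraic nature of $\phi$ guarantees a Puiseux expansion $\phi(x)-y_0=\alpha(x-x_0)+O((x-x_0)^{1+\delta})$ with $\alpha\ne 0$ (the leading exponent must be $1$ by bi-Lipschitz), after which the asymptotic equality $b\alpha^n(x-x_0)^n+\cdots=ca(x-x_0)^m+\cdots$ forces both $m=n$ and the relation $b\alpha^m=ca$ between leading coefficients.

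Once these local comparisons are in place, the remaining claims (ordering of critical points, equality of their count, pairing of critical values) follow by a bookkeeping argument: the monotonic homeomorphism $\phi$ sends the finite ordered set of critical points of $f$ onto that of $g$, preserving or reversing the order according to $\epsilon$. No delicate analysis is required beyond what is already needed for the multiplicity step.
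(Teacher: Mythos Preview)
Your proposal misidentifies the content of the lemma. The actual conclusions (which appear immediately after the statement in the paper) are only three:
\begin{enumerate}
\item[\textit{i.}] \(\deg f = \deg g\);
\item[\textit{ii.}] \(\displaystyle\lim_{t\to+\infty}\phi(t)/t = \lim_{t\to-\infty}\phi(t)/t\);
\item[\textit{iii.}] \(\displaystyle 0 < \lim_{|t|\to+\infty}|\phi(t)/t| < \infty\).
\end{enumerate}
There is nothing here about critical points, critical values, or multiplicities; that material is the subject of the \emph{next} lemma (Lemma~\ref{lemma: bi-Lipschitz iff bi-analytic}) and the propositions that follow. So most of your proposal --- the bijection of critical points, the Puiseux expansion at finite points, the multiplicity matching --- is simply aimed at the wrong target, though your multiplicity argument is essentially what the paper does later for \((i)\Rightarrow(ii)\) of Lemma~\ref{lemma: bi-Lipschitz iff bi-analytic}.

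Your treatment of item \textit{i} (degree equality via \(|\phi(x)|\asymp|x|\) and polynomial growth) is correct and matches the paper. But you do not address items \textit{ii} and \textit{iii} at all, and these are the real point of the lemma. The paper's argument is this: since \(\phi\) is semialgebraic the one-sided limits \(l_\pm\coloneqq\lim_{t\to\pm\infty}\phi(t)/t\) exist in the extended reals, and bi-Lipschitz forces them to be finite and nonzero. Computing \(\lim_{t\to\pm\infty} g(\phi(t))/t^d\) two ways gives \(c\,a_d = b_d\,l_+^d = b_d\,l_-^d\), hence \(|l_+| = |l_-|\). Finally, monotonicity of \(\phi\) forces \(l_+\) and \(l_-\) to have the same sign, so \(l_+ = l_-\). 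The step you are missing is precisely this sign argument: a rough \(|\phi(x)|\asymp|x|\) estimate gives only \(|l_+|=|l_-|\), and one must use monotonicity (or equivalently, that \(\phi(t)\) and \(t\) have consistently the same or consistently opposite signs for large \(|t|\)) to upgrade this to equality.
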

\begin{enumerate}[label=\roman*.]
\item \(\deg f = \deg g\)
\item \(\lim_{t\to+\infty} \phi(t)/t = \lim_{t\to-\infty} \phi(t)/t\)
\item \(0 < \lim_{\abs{t}\to+\infty} \abs{\phi(t)/t} < \infty\)
\end{enumerate}

\begin{proof}
Let \(f(t) = \sum_{i=0}^d a_it^i\) and \(g(t) = \sum_{i=0}^e b_it^i\), where \(a_d,b_e\neq 0\), and let 
\begin{equation*}
l_+\coloneqq \lim_{t\to+\infty}\frac{\phi(t)}{t} 
\quad\text{and}\quad
l_-\coloneqq \lim_{t\to-\infty}\frac{\phi(t)}{t}.
\end{equation*}
These limits are both well-defined in the extended real line because \(\phi\) is semialgebraic. Also, since \(\phi\) is bi-Lipschitz, they are nonzero real numbers.

Since \(\lim_{\abs{t}\to+\infty}\abs{\phi(t)} = +\infty\), we have
\begin{equation*}
\lim_{\abs{t}\to+\infty}\frac{g(\phi(t))}{\phi(t)^e} 
= \lim_{\abs{t}\to+\infty}\frac{g(t)}{t^e}
= b_e.
\end{equation*}
Then,
\begin{equation*}
c\cdot\lim_{t\to+\infty}\frac{f(t)}{t^e} = \lim_{t\to+\infty}\frac{g(\phi(t))}{t^e}
=\lim_{t\to+\infty}\frac{g(\phi(t))}{\phi(t)^e}\cdot
   \lim_{t\to+\infty} \left(\frac{\phi(t)}{t}\right)^e = b_e l_+^e.
\end{equation*}
Since \(b_el_+^e\) is a nonzero real number, it follows that \(d = e\) 
(which proves (i)) and also that
\begin{equation*}
c\cdot a_d = c\cdot\lim_{t\to+\infty} f(t)/t^d = b_dl_+^d.
\end{equation*}
Similarly, we have:
\begin{equation*}
c\cdot a_d =  c\cdot\lim_{t\to-\infty} \frac{f(t)}{t^d}  
=\lim_{t\to-\infty}\frac{g(\phi(t))}{t^d}
=\lim_{t\to-\infty}\frac{g(\phi(t))}{\phi(t)^d}\cdot
   \lim_{t\to-\infty} \left(\frac{\phi(t)}{t}\right)^d 
= b_d l_-^d.
\end{equation*}
Thus, \(l_+^d = c\cdot a_d/b_d = l_-^d\), which implies that 
\(\abs{l_+} = \abs{l_-}\). 
On the other hand, since \(\phi\) is monotonic and 
\(\lim_{\abs{t}\to+\infty}\abs{\phi(t)} = +\infty\),
either \(\phi(t)\) and \(t\) have the same sign for large \(\abs{t}\), or 
\(\phi(t)\) and \(t\) have opposite signs for  large \(\abs{t}\). 
In any case, \(l_+\) and \(l_-\) have the same sign, and therefore \(l_+ = l_-\). This proves (ii) and ensures that the limit \(\lim_{\abs{t}\to+\infty} \phi(t)/t\) is a well-defined nonzero real number. Hence, (iii) is also proved.
\end{proof}

By Lemma \ref{lemma: immediate consequences Lipschitz equivalence equation}, equality of degrees is a necessary condition for Lipschitz equivalence. Thus, the problem of classifying polynomial functions modulo Lipschitz equivalence is reduced to the classification of nonconstant polynomial functions of the same degree.

\begin{lemma}
\label{lemma: bi-Lipschitz iff bi-analytic}
 Let \(f,g\from\R\to\R\) be two polynomial functions of the same degree 
 \(d\geq 1\), and suppose that \(\phi\from\R\to\R\) is a bijective function such that \(g\circ\phi = cf\), for some constant \(c > 0\). The following conditions are equivalent:
\begin{enumerate}[label=\roman*.]
\item \(\phi\) is bi-Lipschitz;
\item The multiplicity of \(f\) at \(t\) is equal to the multiplicity of \(g\) at \(\phi(t)\), for all \(t\in\R\); 
\item \(\phi\) is bi-analytic.
\end{enumerate}
\end{lemma}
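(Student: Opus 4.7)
The strategy is to prove the equivalences cyclically as (i)$\Rightarrow$(ii)$\Rightarrow$(iii)$\Rightarrow$(i). For (i)$\Rightarrow$(ii), I fix $t_0\in\R$ and let $m=\operatorname{mult}(f,t_0)$, $n=\operatorname{mult}(g,\phi(t_0))$. Polynomial factorization yields the two-sided asymptotics $|f(t)-f(t_0)|\asymp|t-t_0|^m$ and $|g(s)-g(\phi(t_0))|\asymp|s-\phi(t_0)|^n$ on small neighborhoods. Substituting $s=\phi(t)$ and using $g\circ\phi=cf$ together with the bi-Lipschitz bound $|\phi(t)-\phi(t_0)|\asymp|t-t_0|$ then forces $|t-t_0|^n\asymp|t-t_0|^m$ on a punctured neighborhood of $t_0$, so $m=n$.

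For (ii)$\Rightarrow$(iii), I work locally at an arbitrary $t_0$ with $s_0\coloneqq\phi(t_0)$ and common multiplicity $m$, and factor $f(t)-f(t_0)=(t-t_0)^m k(t)$ and $g(s)-g(s_0)=(s-s_0)^m h(s)$ with polynomials $k,h$ such that $k(t_0),h(s_0)\neq0$. The functional equation becomes
\[
(\phi(t)-s_0)^m\,h(\phi(t)) \;=\; c\,(t-t_0)^m\,k(t).
\]
A brief sign analysis, separating the parity of $m$, shows that one may arrange $h(s_0),ck(t_0)>0$: for even $m$ both sides are nonnegative and nonzero at $t$ near but not equal to $t_0$ (using $\phi(t)\neq s_0$ by bijectivity together with continuity of $\phi$), so $h$ and $ck$ share their sign there; for odd $m$ the real $m$-th root accommodates any sign, and monotonicity of $\phi$ (a continuous bijection of $\R$) fixes the orientation. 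Taking real $m$-th roots and absorbing any remaining $\pm$ into $\epsilon=\pm1$ yields
\[
v\,h(v+s_0)^{1/m} \;=\; \epsilon\,u\,(c\,k(t_0+u))^{1/m}
\]
in the variables $u=t-t_0$, $v=\phi(t)-s_0$, with both sides real-analytic and of nonzero derivative at $0$. Inverting the local analytic diffeomorphism $v\mapsto v\,h(v+s_0)^{1/m}$ expresses $v$, and hence $\phi$, as an analytic function of $t$ near $t_0$. Applying the same argument to $\phi^{-1}$, which satisfies $f\circ\phi^{-1}=c^{-1}g$ with the same matched multiplicities, yields analyticity of $\phi^{-1}$, hence bi-analyticity.

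For (iii)$\Rightarrow$(i), bi-analyticity supplies a continuous, nowhere-vanishing $\phi'$, so $|\phi'|$ is already bounded above and below by positive constants on every compact set. To control the behavior at infinity, I compare leading coefficients in $g(\phi(t))=cf(t)$ to deduce $|\phi(t)|\sim(c|a_d|/|b_d|)^{1/d}|t|$ as $|t|\to\infty$, and use the differentiated relation $g'(\phi(t))\phi'(t)=cf'(t)$, valid for large $|t|$ since $g'$ has only finitely many zeros, to conclude that $\phi'(t)$ tends to a nonzero finite limit at $\pm\infty$. Therefore $\phi'$ is globally bounded above and below by positive constants, so $\phi$ is bi-Lipschitz.

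The crux is implication (ii)$\Rightarrow$(iii): the subtle step is extracting the real $m$-th root of an implicit polynomial identity and pinning down the correct sign so that the resulting local formula for $\phi$ is genuinely analytic at every point, with extra care needed when $m$ is even and both sides carry nontrivial signs.
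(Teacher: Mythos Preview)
Your cyclic strategy and each of the three implications track the paper's proof closely: (i)$\Rightarrow$(ii) is identical; for (ii)$\Rightarrow$(iii) the paper passes through an analytic normal form $a+\rho t^k$ via a local coordinate change rather than your explicit factorization plus $m$-th root, but the content is the same; for (iii)$\Rightarrow$(i) you use the differentiated relation $g'(\phi)\,\phi'=cf'$ directly, whereas the paper invokes semialgebraicity of $\phi$ and L'H\^opital to extend $|\phi'|$ continuously to the one-point compactification of $\R$. Your route there is a touch more elementary and equally valid, though strictly speaking your argument only yields $|\phi'(t)|\to L>0$ at each end, which is all that is needed.

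There is, however, a genuine gap in your (ii)$\Rightarrow$(iii), and you share it with the paper. You invoke ``continuity of $\phi$'' and ``monotonicity of $\phi$ (a continuous bijection of $\R$)'', but the hypotheses give only a \emph{bijection} satisfying $g\circ\phi=cf$ together with the multiplicity condition---no continuity is assumed. In fact (ii)$\Rightarrow$(iii) is false as literally stated: take $f=g=t^2(t-1)^2$, $c=1$, and define $\phi$ to be the identity on $(-\infty,0]\cup[1,\infty)$ and $\phi(t)=1-t$ on $(0,1)$. This $\phi$ is a bijection of $\R$ with $f\circ\phi=f$ (using the symmetry $f(1-t)=f(t)$) and it preserves all multiplicities, yet it is discontinuous at $0$ and $1$, hence not bi-analytic. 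The paper's own proof has the same slip at the step ``shrinking the interval $I$, if necessary, we can assume that $\phi(I)\subseteq J$''. In every application of the lemma within the paper $\phi$ is built explicitly from monotone branches and is visibly continuous, so adding continuity of $\phi$ to the hypotheses repairs both arguments; you should state this assumption openly rather than slipping it in mid-proof.
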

 
\begin{proof}
\begin{description}[leftmargin=0cm]
\item[\((i)\implies (ii)\colon\)] Pick any point \(t_0\in\R\). Let \(k\) be the multiplicity of \(f\) at \(t_0\), and let \(l\) be the multiplicity of \(g\) at \(\phi(t_0)\). For any pair of functions \(u,v\from\R\to\R\), we write \(u \sim v\) to indicate that there exist constants \(A,B>0\) such that
\(A\abs{v(t)}\leq \abs{u(t)} \leq B\abs{v(t)}\), 
for \(t\) sufficiently close to \(t_0\). Then, \(f(t) - f(t_0) \sim (t-t_0)^k\) and \(g(s) - g(\phi(t_0)) \sim (s-\phi(t_0))^l\). Since \(g\circ \phi = c f\), this implies that \((\phi(t)-\phi(t_0))^l \sim (t-t_0)^k\). And since we are assumig that \(\phi\) is bi-Lipschitz, it follows that \((t-t_0)^l \sim (t-t_0)^k\). Therefore, \(k = l\).

\item[\((ii)\implies (iii)\colon\)] Pick any point \(t_0\in\R\). 
Suppose that \(\hat{f}\coloneqq c f\) has multiplicity \(k\) at \(t_0\).
Then, there exist an increasing analytic diffeomorphism 
\(u\from I \to (-\epsilon, \epsilon)\), with \(t_0\in I\), and a constant 
\(\rho\in\R\setminus\{0\}\), such that \(u(t_0) = 0\) and 
\(\hat{f}\circ u^{-1}(t) = a + \rho t^k\), for \(\abs{t} < \epsilon\);
where \(a \coloneqq \hat{f}(t_0) = g\circ\phi(t_0)\).

Since we are assuming that condition \((ii)\) holds, the multiplicity of \(g\) at the point \(\phi(t_0)\) is also \(k\). Then, as before, there exist an increasing analytic diffeomorphism \(v\from J \to (-\epsilon^\prime, \epsilon^\prime)\), with \(\phi(t_0)\in J\), and a constant \(\sigma\in\R\setminus\{0\}\), such that \(v(\phi(t_0)) = 0\) and 
\(g\circ v^{-1}(t) = a + \sigma t^k\), for \(\abs{t} < \epsilon^\prime\). 

Shrinking the interval \(I\), if necessary, we can assume that \(\phi(I)\subseteq J\). Hence, we can write \(\hat{f}\circ u^{-1}(t) = g\circ v^{-1}(\bar{\phi}(t))\), where 
\(\bar{\phi} \coloneqq 
v\circ \phi\circ u^{-1}\from (-\epsilon,\epsilon)\to (-\epsilon^\prime,\epsilon^\prime)\);
and then it follows that \(\bar{\phi}(t) = \nu t\), where
\(\nu = \pm\abs{\frac{\rho}{\sigma}}^{\frac{1}{k}}\), depending on whether 
\(\phi \) is increasing (positive sign) or decreasing (negative sign).
In particular, this shows that \(\bar{\phi}\) is analytic.

Therefore, \(\phi |_{I} = v^{-1}\circ \bar{\phi} \circ u\) is analytic; so \(\phi\) is analytic at \(t_0\). Since the point \(t_0\in\R\) is arbitrary, it follows that \(\phi\) is an analytic function. A similar argument, applied to \(\phi^{-1}\), shows that 
\(\phi^{-1}\) is also analytic.

\item[\((iii)\implies (i)\colon\)] Suppose that \(\phi\) is bi-analytic. 
Then, in particular, \(\phi\) is a homeomorphism, so \(\phi\) is monotonic and 
\(\lim_{\abs{t}\to+\infty}\abs{\phi(t)}~=~+\infty\). Also, since \(f,g\) are non-constant polynomial functions and \(g\circ \phi = cf\), \(\phi\) is a semialgebraic function (see \cite[Lemma~3.1]{BFP}).

Let \(f(t) = \sum_{i=0}^{d} a_i t^i\) and \(g(t) = \sum_{i=0}^{d} b_i t^i\), 
with \(a_d,b_d\neq 0\). Since \(g\circ\phi = cf\) and
\(\lim_{\abs{t}\to+\infty}\abs{\phi(t)}~=+\infty\), we have
\begin{equation}
\label{eq: limits to the power d}
\lim_{t\to-\infty}\left(\frac{\phi(t)}{t}\right)^d = 
\lim_{t\to+\infty}\left(\frac{\phi(t)}{t}\right)^d =
c\cdot\frac{a_d}{b_d}
\end{equation}

Let 
\(l_+ \coloneqq \lim_{t\to+\infty}\phi(t)/t\) and 
\(l_- \coloneqq \lim_{t\to-\infty}\phi(t)/t\).
Both of these limits are well-defined in the extended real line because \(\phi\) is semialgebraic. It follows from equation (\ref{eq: limits to the power d}) that we actually have \(l_+,l_-\in\R\setminus\{0\}\) and \(\abs{l_+} = \abs{l_-}\). (Notice that to obtain this last equality from equation 
(\ref{eq: limits to the power d}), we use the fact that \(d > 0\).)

By L'Hôpital's rule, 
\(\lim_{t\to+\infty} \phi'(t) = l_+\) and \(\lim_{t\to-\infty} \phi'(t) = l_-\). 
(The existence of these limits in the extended real line is guaranteed by the fact that \(\phi'\) is semialgebraic, so L'Hôpital's rule can be applied.)
Thus, \(\lim_{t\to+\infty} \abs{\phi'(t)} = \lim_{t\to-\infty} \abs{\phi'(t)}\); so that \(\abs{\phi'}\) can be continuously extended to a positive function defined on the compact space \(\R\cup\{\infty\}\cong S^1\). Hence, there exist constants \(A,B > 0\) such that \(A\leq \abs{\phi'(t)}\leq B\), for all \(t\in\R\). 
Therefore, \(\phi\) is bi-Lipschitz.
\end{description}
\end{proof} 

From the two lemmas above, it follows that if two polynomial functions 
\(f,g\from\R\to\R\) are Lipschitz equivalent, then they have the same degree and they have the same number of critical points. The first assertion is immediate from Lemma \ref{lemma: immediate consequences Lipschitz equivalence equation}. The second assertion follows from Lemma \ref{lemma: bi-Lipschitz iff bi-analytic}: if \(\phi\from\R\to\R\) is a bi-Lipschitz homemorphism such that \(g\circ \phi = cf\), for some constant \(c > 0\), then 
\(\phi\) induces a  1-1 correpondence between the critical points of \(f\) and the critical points of \(g\), because it preserves multiplicity. 

Propositions \ref{prop: no critical points}, \ref{prop: only one critical point}, and \ref{prop: at least 2 critical points} provide effective criteria to determine whether any two nonconstant polynomial functions \(f,g\from\R\to\R\), of the same degree, are Lipschitz equivalent. In Proposition \ref{prop: no critical points}, we consider the case in which \(f\) and \(g\)  have no critical points; 
in Proposition \ref{prop: only one critical point}, the case in which both 
\(f\) and \(g\) have only one critical point; and in Proposition 
\ref{prop: at least 2 critical points}, the case in which \(f\) and \(g\) have the same number \(k\geq 2\) of critical points.

\begin{proposition}
\label{prop: no critical points}
Let \(f,g\from\R\to\R\) be two polynomial functions of the same degree 
\(d\geq 1\). If \(f\) and \(g\) have no critical points, then \(f\) and \(g\) are Lipschitz equivalent. 
\end{proposition}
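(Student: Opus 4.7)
The plan is to produce an explicit bi-Lipschitz $\phi$ as $g^{-1}\circ f$ and invoke Lemma~\ref{lemma: bi-Lipschitz iff bi-analytic} to check that it works, reducing the whole argument to the multiplicity criterion (ii).

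First I would verify that, under the hypotheses, both $f$ and $g$ are bijections from $\R$ to $\R$. Since $f$ has no critical points, $f'$ is a polynomial of degree $d-1$ with no real roots; a real polynomial of odd degree must have a real root, so $d-1$ is even and $d$ is odd. Because $f'$ has constant sign, $f$ is strictly monotonic, and because $\deg f = d \geq 1$ we have $\lim_{t\to\pm\infty} f(t) = \pm\infty$ (with an appropriate choice of signs). Therefore $f\from\R\to\R$ is a strictly monotonic bijection, and the same applies to $g$.

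Next I would set $\phi \coloneqq g^{-1}\circ f$. This is a bijection of $\R$ (as a composition of bijections), and by construction $g\circ \phi = f$, so equation~(\ref{eq: Lipschitz equivalence}) holds with the constant $c = 1 > 0$. It then suffices to show that $\phi$ is bi-Lipschitz, for which I would appeal to Lemma~\ref{lemma: bi-Lipschitz iff bi-analytic}: at every $t\in\R$, $f'(t) \neq 0$, so the multiplicity of $f$ at $t$ equals $1$; similarly the multiplicity of $g$ at $\phi(t)$ equals $1$ since $g$ has no critical points either. Condition (ii) of the lemma is therefore satisfied, and the equivalence (ii)$\implies$(i) yields that $\phi$ is bi-Lipschitz, so $f \cong g$.

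There is essentially no hard step in this proof: once one notices that ``no critical points'' forces $f$ and $g$ to be homeomorphisms of $\R$ and forces every multiplicity to equal $1$, Lemma~\ref{lemma: bi-Lipschitz iff bi-analytic} does all the work. The only mildly delicate point is the preliminary observation that $d$ must be odd and that the limits of $f$ and $g$ at $\pm\infty$ are infinite, so that $g^{-1}$ is defined on all of $\R$ and the composition $g^{-1}\circ f$ makes sense everywhere.
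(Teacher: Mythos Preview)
Your proof is correct and follows essentially the same approach as the paper: define $\phi \coloneqq g^{-1}\circ f$ and invoke Lemma~\ref{lemma: bi-Lipschitz iff bi-analytic}. The only cosmetic difference is that the paper observes directly that $\phi$ is bi-analytic and uses the implication (iii)$\Rightarrow$(i), whereas you check the multiplicity condition and use (ii)$\Rightarrow$(i); both are immediate from the hypothesis that $f$ and $g$ have no critical points.
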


\begin{proof}
If \(f\) and \(g\) have no critical points, then they are both bi-analytic diffeomorphisms. Hence,  \(f = g\circ \phi\), where 
\(\phi \coloneqq g^{-1}\circ f\) is a bi-analytic diffeomorphism. By Lemma \ref{lemma: bi-Lipschitz iff bi-analytic}, \(\phi\) is bi-Lipschitz.
\end{proof}
 
\begin{proposition}
\label{prop: only one critical point}
Let \(f,g\from\R\to\R\) be two polynomial functions of the same degree 
\(d\geq 1\). Suppose that \(f\) has only one critical point \(t_0\), with multiplicity \(k\), and that \(g\) has only one critical point \(s_0\), with the same multiplicity \(k\). Also, suppose that \(f(t_0)\) and \(g(s_0)\) have the same sign\footnote{Clearly, this is a necessary condition for \(f\) and \(g\) to be Lipschitz equivalent.}(positive, negative or zero).
\begin{enumerate}[label=\roman*.]
\item If \(d\) is odd, then \(f\) and \(g\) are Lipschitz equivalent;
\item If \(d\) is even, then \(f\) and \(g\) are Lipschitz equivalent if and only if \(s_0\) and \(t_0\) are either both minimum points, or both maximum points  of \(f\) and \(g\), respectively.  
\end{enumerate}
\end{proposition}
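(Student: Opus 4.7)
The plan is to construct, for an appropriate \(c > 0\), a bijection \(\phi\from\R\to\R\) satisfying \(g\circ\phi = cf\) and preserving multiplicities at every point, then invoke Lemma \ref{lemma: bi-Lipschitz iff bi-analytic} to promote \(\phi\) to a bi-Lipschitz homeomorphism. A preliminary observation pins down the geometry at the critical points: since \(t_0\) is the only real root of \(f'\), with multiplicity \(k-1\), the quotient \(f'(t)/(t-t_0)^{k-1}\) is a real polynomial of degree \(d-k\) without real roots, so \(d-k\) is even. Thus \(d\) and \(k\) share parity, so case (i) forces \(k\) odd (so \(t_0\) is an inflection point and \(f\) is a strictly monotone bijection of \(\R\)) while case (ii) forces \(k\) even (so \(t_0\) is a genuine local extremum). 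The analogous statements hold for \(g\) at \(s_0\).

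For (i), both \(f\) and \(g\) are monotone bijections \(\R\to\R\). Choose \(c \coloneqq g(s_0)/f(t_0)\) when \(f(t_0) \neq 0\) and \(c \coloneqq 1\) otherwise; the hypothesis that \(f(t_0)\) and \(g(s_0)\) have the same sign forces \(c > 0\) in the first case and makes any positive choice work in the second. Then \(\phi \coloneqq g^{-1}\circ(cf)\) is a bijection, and since \(g(\phi(t_0)) = cf(t_0) = g(s_0)\) and \(g\) is injective, \(\phi(t_0) = s_0\). Hence \(\phi\) matches the unique critical points of \(f\) and \(g\) (both of multiplicity \(k\)), and off the critical points multiplicity is \(1\) on both sides. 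Lemma \ref{lemma: bi-Lipschitz iff bi-analytic} then yields bi-Lipschitzness.

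For (ii), the necessity of the min/min or max/max condition is a simple range comparison: if \(t_0\) is a minimum of \(f\) and \(s_0\) a maximum of \(g\), then \(\mathrm{range}(cf) = [c\alpha,+\infty)\) is unbounded above while \(\mathrm{range}(g) = (-\infty,\beta]\) is bounded above, so no bijection \(\phi\) can make \(g\circ\phi = cf\) for any \(c > 0\). For sufficiency, assume (say) both are minima, with \(\alpha \coloneqq f(t_0)\) and \(\beta \coloneqq g(s_0)\). Put \(c \coloneqq \beta/\alpha\) if \(\alpha \neq 0\), otherwise \(c \coloneqq 1\); either way \(c > 0\) and \(c\alpha = \beta\). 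The polynomial \(f\) splits into two monotone branches \(f|_{(-\infty,t_0]}\) and \(f|_{[t_0,\infty)}\), each a bijection onto \([\alpha,\infty)\); similarly for \(g\) about \(s_0\) with target \([\beta,\infty)\). Define \(\phi\) on each branch of \(f\) as the composition of \(cf\) with the inverse of the corresponding monotone branch of \(g\). Both pieces send \(t_0\) to \(s_0\), so they glue into a bijection \(\phi\from\R\to\R\) with \(g\circ\phi = cf\) that matches critical points. Lemma \ref{lemma: bi-Lipschitz iff bi-analytic} again concludes, and the case of two maxima is symmetric.

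The main technical hurdle is choosing the correct monotone branches of \(g\) to compose with the branches of \(f\) in (ii), so that the resulting \(\phi\) is continuous and globally bijective; the subcase \(\alpha = \beta = 0\) also requires slight extra care, since the natural formula for \(c\) is indeterminate. Neither point presents a real obstruction once everything is phrased in terms of monotone branches, and the rest is a direct application of Lemma \ref{lemma: bi-Lipschitz iff bi-analytic}.
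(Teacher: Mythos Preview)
Your proposal is correct and follows essentially the same approach as the paper: choose \(c>0\) so that \(cf(t_0)=g(s_0)\), build \(\phi\) as \(g^{-1}\circ(cf)\) in the odd case and as a two-branch gluing in the even case, then invoke Lemma~\ref{lemma: bi-Lipschitz iff bi-analytic}. Your preliminary parity observation (\(d-k\) even) is a helpful clarification that the paper leaves implicit, and your explicit handling of the case \(f(t_0)=g(s_0)=0\) is slightly more careful than the paper's ``choose \(c>0\) such that \(g(s_0)=cf(t_0)\)'', but the argument is otherwise the same.
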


\begin{proof}
First, consider the case where \(d\) is odd.
If a real polynomial function of a single variable, of odd degree, has only one critical point, then it is a homeomorphism. Thus, under the assumption that 
\(d\) is odd, \(f\) and \(g\) are homeomorphisms. Choose a constant \(c > 0\) such that \(g(s_0) = c f(t_0)\), and define \(\phi \coloneqq g^{-1}\circ \hat{f}\from\R\to\R\), where \(\hat{f} \coloneqq c f\). The function \(\phi\) is a bijection satisfying (\ref{eq: Lipschitz equivalence}), and the multiplicity of \(f\) at \(t\) is equal to the multiplicity of \(g\) at \(\phi(t)\) for all \(t\in\R\). By Lemma \ref{lemma: bi-Lipschitz iff bi-analytic}, \(\phi\) is bi-Lipschitz.

Now, suppose that \(d\) is even. If a  real polynomial function of a single variable, of even degree, has only one critical point, then this critical point is a point of local extremum (and consequently it is a point of even multiplicity).
If \(f\) and \(g\) are Lipschitz equivalent, then \(t_0\) and \(s_0\) are either both minimum points or both maximum points of \(f\) and \(g\), respectively. Otherwise, we would have \(\hat{f}(\R)\cap g(\R) = \{\hat{f}(t_0)\} = \{g(s_0)\}\), which is absurd, since equation (\ref{eq: Lipschitz equivalence}) implies that 
\(\hat{f}(\R) = g(\R) \). Conversely, suppose that \(t_0\) and \(s_0\) are either both minimum points or both maximum points of \(f\) and \(g\), respectively.
Pick any constant \(c > 0\) for which \(g(s_0) = c f(t_0)\), and define 
\(\phi\from\R\to\R\) by
\begin{equation*}
\phi |_{(-\infty,t_0]} \coloneqq 
     \left(g |_{(-\infty,s_0]} \right)^{-1}\circ \hat{f}|_{(-\infty,t_0]},
\quad
\phi |_{[t_0,+\infty)} \coloneqq 
     \left(g |_{[s_0,+\infty)} \right)^{-1}\circ \hat{f}|_{[t_0,+\infty)}.
\end{equation*}

Clearly, \(\phi\) is a bijection satisfying (\ref{eq: Lipschitz equivalence}), and the multiplicity of \(f\) at \(t\) is equal to the multiplicity of \(g\) at \(\phi(t)\) for all \(t\in\R\). Again, by Lemma \ref{lemma: bi-Lipschitz iff bi-analytic}, \(\phi\) is bi-Lipschitz.
\end{proof}

For the case in which \(f\) and \(g\) have the same number \(p\geq 2\) of critical points, we introduce an adapted version of the notion of {\it multiplicity symbol} defined in \cite{BFP}.

Let \(f\from\R\to\R\) be a polynomial function of degree \(d\geq 1\), having exactly \(p\) critical points, with \(p\geq 2\). Let \(t_1< \ldots < t_p\) be the critical points of \(f\), with multiplicities \(\mu_1,\ldots,\mu_p\), respectively. 
The {\it multiplicity symbol} of \(f\) is the ordered pair \(\left(a,\mu\right)\) whose first entry is the \(p\)-tuple \(a = \left(f(t_1),\ldots,f(t_p)\right)\), and second entry is the \(p\)-tuple \(\mu = \left(\mu_1,\ldots,\mu_p\right)\).

Let \(g\from\R\to\R\) be another polynomial function of degree \(d\geq 1\), having exactly the same number \(p\geq 2\) of critical points. 
Let \(s_1< \ldots < s_p\) be the critical points of \(g\), with multiplicities \(\nu_1,\ldots,\nu_p\), respectively. The multiplicity symbol of \(g\) is the ordered pair \(\left(b,\nu\right)\), where 
\(b = \left(g(s_1),\ldots,g(s_p)\right)\) and \(\nu = \left(\nu_1,\ldots,\nu_p\right)\).

The multiplicity symbols \(\left(a,\mu\right)\) and \(\left(b,\nu\right)\) are said to be: 
\begin{enumerate}[label=\roman*.]
\item {\it directly similar}, if there exists a constant \(c > 0\) such that \(b = c\cdot a\), and \(\nu = \mu\);
\item  {\it reversely similar}, if there exists a constant \(c > 0\) such that 
\(b = c\cdot \overline{a}\), and \(\nu=\overline{\mu}\).
\end{enumerate}
For any \(p\)-tuple \(x = \left(x_1,\ldots,x_p\right)\), 
\(\overline{x} := \left(x_p,\ldots,x_1\right)\) is the \(p\)-tuple \(x\) written in reverse order.

The multiplicity symbols \(\left(a,\mu\right)\) and \(\left(b,\nu\right)\) are said to be {\it similar} if they are either directly similar or reversely similar.

\begin{proposition}
\label{prop: at least 2 critical points}
 Let \(f,g\from\R\to\R\) be two polynomial functions of the same degree 
\(d\geq 1\) having the same number \(p\geq 2\) of critical points. Then, \(f\) and \(g\) are Lipschitz equivalent if and only if their multiplicity symbols are similar.
\end{proposition}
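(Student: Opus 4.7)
The plan is to prove both implications by leveraging Lemma \ref{lemma: bi-Lipschitz iff bi-analytic}. For the forward direction, suppose $g\circ\phi=cf$ with $\phi$ bi-Lipschitz. Lemma \ref{lemma: bi-Lipschitz iff bi-analytic} gives that $\phi$ is an analytic diffeomorphism of $\R$ that preserves pointwise multiplicity, hence is strictly monotonic and restricts to an order-preserving (resp.\ order-reversing) bijection between the ordered critical points of $f$ and those of $g$. Evaluating $g\circ\phi=cf$ at each $t_i$ and applying multiplicity preservation then yields $b_i=ca_i$, $\nu_i=\mu_i$ in the increasing case (direct similarity), and $b_{p+1-i}=ca_i$, $\nu_{p+1-i}=\mu_i$ in the decreasing case (reverse similarity).

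For the converse, I first reduce the reversely similar case to the directly similar case. If $(a,\mu)$ and $(b,\nu)$ are reversely similar, set $\tilde g(s)\coloneqq g(-s)$. A direct calculation shows that the critical points of $\tilde g$ in increasing order are $-s_p<\dots<-s_1$, with critical values $b_{p+1-i}$ and multiplicities $\nu_{p+1-i}$, so the multiplicity symbol of $\tilde g$ is $(ca,\mu)$, directly similar to $(a,\mu)$. Since $g\circ(-\mathrm{id})=\tilde g$ and $-\mathrm{id}$ is bi-Lipschitz, $\tilde g\cong g$ with $c=1$, and because Lipschitz equivalence is transitive it suffices to handle the directly similar case.

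Assume now $b=ca$ and $\nu=\mu$, and set $\hat f\coloneqq cf$. On each bounded interval $[t_i,t_{i+1}]$, both $\hat f$ and $g|_{[s_i,s_{i+1}]}$ are strictly monotonic (their derivatives have constant sign on the open interval) with matching endpoint values $\hat f(t_j)=b_j=g(s_j)$ for $j=i,i+1$; in particular their monotonicity directions coincide, and $\phi|_{[t_i,t_{i+1}]}\coloneqq(g|_{[s_i,s_{i+1}]})^{-1}\circ\hat f|_{[t_i,t_{i+1}]}$ is a well-defined bijection onto $[s_i,s_{i+1}]$ sending $t_j\mapsto s_j$. To extend $\phi$ to the tails $(-\infty,t_1]$ and $[t_p,+\infty)$, I use a parity argument: from $f(t)-a_1=C(t-t_1)^{\mu_1}+\cdots$ with $C\neq 0$, the monotonicity direction of $f$ on $(-\infty,t_1]$ coincides with its direction on $[t_1,t_2]$ iff $\mu_1$ is odd; the same holds for $g$ at $s_1$, and since $\mu_1=\nu_1$ the two tails of $f$ and $g$ have matching asymptotic behavior (both going to $+\infty$ or both to $-\infty$), which allows me to extend $\phi$ by the same formula on this ray, and analogously at $t_p,s_p$. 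The resulting $\phi:\R\to\R$ is a bijection with $g\circ\phi=cf$, and since $\mu_i=\nu_i$ at each critical point while multiplicities equal $1$ elsewhere, Lemma \ref{lemma: bi-Lipschitz iff bi-analytic} upgrades it to a bi-Lipschitz homeomorphism. The main obstacle I anticipate is the parity/asymptotic step, which must be carried out carefully to guarantee that the monotonicity patterns of $f$ and $g$ match on all $p+1$ intervals so that the piecewise construction of $\phi$ is coherent.
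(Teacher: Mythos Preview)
Your proposal is correct and follows essentially the same approach as the paper: both directions rely on Lemma~\ref{lemma: bi-Lipschitz iff bi-analytic}, the reversely similar case is reduced to the directly similar one via precomposition with \(-\id\), and the bijection \(\phi\) is built piecewise on the \(p+1\) monotonicity intervals, with the tail compatibility deduced from the equality \(\mu_1=\nu_1\) (resp.\ \(\mu_p=\nu_p\)) together with the matching endpoint values on the adjacent bounded interval. Your parity formulation of the tail argument is just a slightly more explicit phrasing of the paper's one-line justification.
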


\begin{proof}
First, suppose that \(f\) and \(g\) are Lipschitz equivalent. Then there exist a bi-Lipschitz function \(\phi\from\R\to\R\) and a constant \(c > 0\) such that
\(g\circ \phi = cf\). Let \(t_1,\ldots,t_p\) be the critical points of \(f\), with multiplicities \(\mu_1,\ldots,\mu_p\), and let \(s_1,\ldots,s_p\) be the critical points of \(g\), with multiplicities \(\nu_1,\ldots,\nu_p\). (As we noted just after the proof of Lemma \ref{lemma: bi-Lipschitz iff bi-analytic}, Lipschitz equivalent polynomials have the same number of critical points.)
Let \((a,\mu)\) be the multiplicity symbol of \(f\) and \((b,\nu)\) the multiplicity symbol of \(g\).

By Lemma \ref{lemma: bi-Lipschitz iff bi-analytic}, \(\phi\) preserves multiplicities. Thus, if \(\phi\) is increasing, then we have 
\(\phi(t_i) = s_i\) and \(\mu_i = \nu_i\), for \(i=1,\ldots,p\). Since \(g\circ \phi = cf\), it follows that also \(b_i = g(s_i) = c\cdot f(t_i) = c\cdot a_i\), for \(i=1,\ldots,p\). Hence, if \(\phi\) is increasing then the multiplicity symbols of \(f\) and \(g\) are directly similar. On the other hand, if \(\phi\) is decreasing, then we have \(\phi(t_{p+1-i}) = s_i\) and \(\mu_{p+1-i} = \nu_i\), for \(i=1,\ldots,p\). Since \(g\circ \phi = cf\), it follows that also \(b_i = g(s_i) = c\cdot f(t_{p+1-i}) = c\cdot a_{p+1-i}\), for \(i=1,\ldots,p\). Hence, if \(\phi\) is decreasing then the multiplicity symbols of \(f\) and \(g\) are reversely similar.
In any case, it follows that the multiplicity symbols of \(f\) and \(g\) are similar. 

Now we prove the converse. Suppose that the multiplicity symbols of \(f\) and \(g\) are similar. Replacing \(g\) with \(g\circ \alpha\), where \(\alpha \coloneqq -\id\from\R\to\R\) (which is bi-Lipschitz), we can assume that the multiplicity symbols of \(f\) and \(g\) are directly similar. 

Let \(t_1< \ldots < t_p\) be the critical points of \(f\), with multiplicities \(\mu_1,\ldots,\mu_p\), respectively; and let \(s_1< \ldots < s_p\) be the critical points of \(g\), with multiplicities \(\nu_1,\ldots,\nu_p\), respectively. Since we are assuming that the multiplicity symbols of \(f\) and \(g\) are directly similar, there exists a constant \(c > 0\) such that \(g(s_i) = c f(t_i)\), for \(i =1,\ldots,p\); and \(\mu_i = \nu_i\), for \(i =1,\ldots,p\). 

Let \(\hat{f}\coloneqq cf\) and \(c_i \coloneqq g(s_i) = \hat{f}(t_i)\).  
(Notice that \(c_i\neq c_{i+1}\), 
for \(1~\leq~i~<~k\).)
The functions \(\hat{f} |_{[t_i,t_{i+1}]}\from [t_i,t_{i+1}]\to [c_i,c_{i+1}]\) and
\(g |_{[s_i,s_{i+1}]}\from [s_i,s_{i+1}]\to [c_i,c_{i+1}]\) 
are both monotonic and injective.
The same is true for the functions 
\(\hat{f} |_{(-\infty,t_1]}\) and \(g |_{(-\infty,s_1]}\),
and also for the functions 
\(\hat{f} |_{[t_p,+\infty)}\) and \(g |_{[s_p,+\infty)}\).

Moreover, the functions 
\(\hat{f} |_{(-\infty,t_1]}\) and \(g |_{(-\infty,s_1]}\)
are either both increasing or both decreasing because
\(\hat{f}(t_1) = g(s_1)\), \(\hat{f}(t_2) = g(s_2)\), and \(\mu_1 = \nu_1\).
Since \(\hat{f}(t_1) = g(s_1)\) and 
\(\abs{\hat{f}(t)}, \abs{g(t)}\to+\infty\), as \(\abs{t}\to +\infty\),
this implies that
\(\hat{f}((-\infty,t_1]) = g((-\infty,s_1])\).

Similarly, the functions
\(\hat{f}|_{[t_p,+\infty)}\) and \(g |_{[s_p,+\infty)}\)
are either both increasing or both decreasing because
\(\hat{f}(t_{p-1}) = g(s_{p-1})\), \(\hat{f}(t_p) = g(s_p)\), and \(\mu_p = \nu_p\).
Since \(\hat{f}(t_p) = g(s_p)\), and
\(\abs{\hat{f}(t)}, \abs{g(t)}~\to~+\infty\), as \(\abs{t}\to+\infty\),
this implies that \(\hat{f}([t_p,+\infty)) = g([s_p,+\infty))\).

Define \(\phi\from\R\to\R\) by
\begin{align*}
\phi |_{(-\infty,t_1]} &\coloneqq 
     \left(g |_{(-\infty,s_1]}\right)^{-1}\circ \hat{f} |_{(-\infty,t_1]}\\
\phi |_{[t_i,t_{i+1}]} &\coloneqq 
     \left(g |_{[s_i,s_{i+1}]}\right)^{-1}\circ \hat{f} |_{[t_i,t_{i+1}]}, 
     \text{ for } 1\leq i < p\\
\phi |_{ [t_p,+\infty)} &\coloneqq 
     \left(g |_{[s_p,+\infty)}\right)^{-1}\circ \hat{f} |_{[t_p,+\infty)}
\end{align*}

Clearly, \(\phi\) is a bijection satisfying (\ref{eq: Lipschitz equivalence}), and
it takes \(t_i\) to \(s_i\), for \(i=1,\ldots,p\). 
Since the multiplicity symbols of \(f\) and \(g\) are directly similar, it follows that the multiplicity of \(f\) at \(t\) is equal to the multiplicity of \(g\) at \(\phi(t)\) for all \(t\in\R\). By Lemma \ref{lemma: bi-Lipschitz iff bi-analytic}, \(\phi\) is bi-Lipschitz.
\end{proof}

\section{Transformation of paths by Lipschitz maps}

In this section, we state and prove some results that will be used in the sequel.

\begin{lemma}
\label{lemma: finite initial velocity}
Let \(\Phi\from(\R^2,0)\to(\R^2,0)\) be a germ of  semialgebraic Lipschitz map, let 
\(\gamma\from[0,\epsilon)\to\R^2\)
be a continuous semialgebraic path such that \(\gamma(0) = 0\),
and let \(\tilde\gamma(t)\coloneqq \Phi(\gamma(t))\).
If \(\gamma\) has finite initial velocity then \(\tilde\gamma\) also has finite initial velocity, that is,  
if \,\(\lim_{t\to 0^+} \abs{\gamma(t)/t} < \infty\)
then \(\lim_{t\to 0^+} \abs{\tilde\gamma(t)/t} < \infty\).
\end{lemma}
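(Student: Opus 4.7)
The plan is to use directly the two hypotheses on $\Phi$: the Lipschitz bound controls the size of $\tilde\gamma$ in terms of $\gamma$, and semialgebraicity guarantees that the relevant limit exists in the extended real line.

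First I would fix a neighborhood $U$ of $0\in\R^2$ on which $\Phi$ admits a representative (still denoted $\Phi$) that is Lipschitz, say with constant $L>0$. Since $\Phi(0)=0$, this gives $\abs{\Phi(x)} = \abs{\Phi(x)-\Phi(0)}\leq L\abs{x}$ for every $x\in U$. Because $\gamma$ is continuous with $\gamma(0)=0$, there is some $\epsilon' \in (0,\epsilon]$ such that $\gamma([0,\epsilon'))\subseteq U$, and therefore
\begin{equation*}
\abs{\tilde\gamma(t)} = \abs{\Phi(\gamma(t))} \leq L\,\abs{\gamma(t)}
\quad\text{for all } t\in[0,\epsilon').
\end{equation*}
Dividing by $t>0$ yields $\abs{\tilde\gamma(t)/t}\leq L\,\abs{\gamma(t)/t}$ on $(0,\epsilon')$.

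Next I would argue that $\lim_{t\to 0^+}\abs{\tilde\gamma(t)/t}$ actually exists in $[0,+\infty]$. The path $\tilde\gamma=\Phi\circ\gamma$ is a composition of semialgebraic maps, hence semialgebraic; so the function $t\mapsto \abs{\tilde\gamma(t)/t}$ is a semialgebraic function of a single variable on $(0,\epsilon')$, and any such function has a well-defined one-sided limit in the extended real line as $t\to 0^+$ (this is the same fact used in Lemma \ref{lemma: immediate consequences Lipschitz equivalence equation} to define $l_+$ and $l_-$). The analogous statement holds for $\abs{\gamma(t)/t}$, whose limit exists and, by hypothesis, is finite.

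Finally, passing to the limit in the pointwise inequality $\abs{\tilde\gamma(t)/t}\leq L\,\abs{\gamma(t)/t}$ gives
\begin{equation*}
\lim_{t\to 0^+}\abs{\tilde\gamma(t)/t} \;\leq\; L\cdot \lim_{t\to 0^+}\abs{\gamma(t)/t} \;<\;\infty,
\end{equation*}
which is exactly what must be shown. There is really no obstacle here: the content of the lemma is only that the Lipschitz inequality $\abs{\Phi(x)}\leq L\abs{x}$, together with semialgebraicity (which upgrades \emph{bounded} to \emph{convergent}), transfers a finite initial velocity from $\gamma$ to $\tilde\gamma$. The only point worth being careful about is not to assert the existence of the limit without invoking semialgebraicity, since a priori $\abs{\gamma(t)/t}$ could oscillate for a general continuous path.
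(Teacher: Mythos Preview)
Your argument is correct and follows the same approach as the paper: use \(\Phi(0)=0\) together with the Lipschitz bound to get \(\abs{\tilde\gamma(t)/t}\leq L\,\abs{\gamma(t)/t}\), and conclude. You are in fact more careful than the paper, which omits the remark that semialgebraicity is what guarantees the limit \(\lim_{t\to 0^+}\abs{\tilde\gamma(t)/t}\) exists rather than merely being a bounded \(\limsup\).
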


\begin{proof}
Since \(\Phi\) is Lipschitz and \(\Phi(0) = 0\), there exists \(K > 0\) such that
\(\abs{\Phi(x,y)}\leq K \abs{(x,y)}\). Then,
\begin{equation*}
\abs{\frac{\tilde\gamma(t)}{t}} = \abs{\frac{\Phi(\gamma(t))}{t}} \leq
K \abs{\frac{\gamma(t)}{t}},
\end{equation*}
which implies that \(\tilde\gamma_+^\prime(0)\) is finite, given that 
\(\gamma_+^\prime(0)\) is finite, by hypothesis.
\end{proof}

\begin{lemma}
\label{lemma: transformation of paths with the same velocity}
Let \(\Phi\from(\R^2,0)\to(\R^2,0)\) be a germ of  semialgebraic Lipschitz map.
Let \(\gamma_1,\gamma_2\from[0,\epsilon)\to\R^2\) be two continuous semialgebraic paths, with finite initial velocity, 
such that \(\gamma_1(0) = \gamma_2(0) = 0\), and let 
\(\tilde\gamma_i(t)\coloneqq \Phi(\gamma_i(t)),\, i = 1,2\). 
If \(\gamma_{1^+}^\prime(0) = \gamma_{2^+}^\prime(0)\), 
then \(\tilde\gamma_{1^+}^\prime(0) = \tilde\gamma_{2^+}^\prime(0)\).
\end{lemma}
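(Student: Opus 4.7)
The plan is to exploit the Lipschitz property of \(\Phi\) on divided differences: if \((\gamma_1(t)-\gamma_2(t))/t\) tends to \(0\) as \(t\to 0^+\), then so does the same quotient formed from \(\tilde\gamma_1,\tilde\gamma_2\), because the numerator can only be contracted by a factor \(K\).

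First I would verify that the one-sided derivatives \(\tilde\gamma_{i^+}^\prime(0)\) exist as finite vectors in \(\R^2\). Each \(\tilde\gamma_i = \Phi\circ\gamma_i\) is a continuous semialgebraic path, so each coordinate of \(\tilde\gamma_i(t)/t\) is a semialgebraic function of a single real variable and thus has a well-defined limit in the extended real line as \(t\to 0^+\). Lemma \ref{lemma: finite initial velocity} guarantees that \(|\tilde\gamma_i(t)/t|\) stays bounded, so each of these coordinate limits is in fact a finite real number. Hence \(\tilde\gamma_{i^+}^\prime(0)\in\R^2\) is well-defined for \(i=1,2\).

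Next, since \(\Phi\) is Lipschitz, there exists \(K>0\) such that \(|\Phi(x)-\Phi(y)|\leq K|x-y|\) for all \(x,y\) sufficiently close to \(0\). Applied to \(x = \gamma_1(t)\) and \(y = \gamma_2(t)\), this gives
\[
\left|\frac{\tilde\gamma_1(t)-\tilde\gamma_2(t)}{t}\right|
= \left|\frac{\Phi(\gamma_1(t))-\Phi(\gamma_2(t))}{t}\right|
\leq K\left|\frac{\gamma_1(t)-\gamma_2(t)}{t}\right|
\]
for all \(t\) sufficiently close to \(0\). Because \(\gamma_1(0)=\gamma_2(0)=0\) and \(\gamma_{1^+}^\prime(0)=\gamma_{2^+}^\prime(0)\) by hypothesis, the right-hand side tends to \(K|\gamma_{1^+}^\prime(0)-\gamma_{2^+}^\prime(0)|=0\) as \(t\to 0^+\). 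Therefore \(\tilde\gamma_{1^+}^\prime(0)-\tilde\gamma_{2^+}^\prime(0)=0\).

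There is no real obstacle here: the only point that demands a little care is the existence of the one-sided derivatives of \(\tilde\gamma_1\) and \(\tilde\gamma_2\) as finite vectors, which is handled by combining semialgebraicity with Lemma \ref{lemma: finite initial velocity}. Once that is in place, the Lipschitz estimate does all the work.
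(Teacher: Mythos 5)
Your proof is correct and follows essentially the same route as the paper's: apply the Lipschitz estimate to the difference \(\Phi(\gamma_1(t))-\Phi(\gamma_2(t))\), divide by \(t\), note that the right-hand side tends to zero by the hypothesis on the initial velocities, and conclude via Lemma~\ref{lemma: finite initial velocity} that the (existing, finite) limits \(\tilde\gamma_{i^+}^\prime(0)\) coincide. The only cosmetic difference is that you make the existence of the limits \(\tilde\gamma_{i^+}^\prime(0)\) explicit via semialgebraicity before running the estimate, whereas the paper invokes Lemma~\ref{lemma: finite initial velocity} at the end for the same purpose; both are fine.
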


\begin{proof}
Since \(\Phi\) is Lipschitz and \(\Phi(0) = 0\), there exists \(K > 0\) such that
\(\abs{\Phi(x,y)}\leq K \abs{(x,y)}\). 
Since
\(\abs{\tilde\gamma_1(t) - \tilde\gamma_2(t)} 
\leq K\abs{\gamma_1(t) - \gamma_2(t)}\),
we have:
\begin{equation*}
\abs{\frac{\tilde\gamma_1(t)}{t} - \frac{\tilde\gamma_2(t)}{t}}
\leq K \abs{\frac{\gamma_1(t)}{t} - \frac{\gamma_2(t)}{t}},
\quad\text{ for } t  > 0.
\end{equation*}
Also, from \(\lim_{t\to 0^+} \gamma_i(t)/t = \gamma_{i^+}^\prime(0)\)
and \(\gamma_{1^+}^\prime(0) = \gamma_{2^+}^\prime(0)\), we see that
\begin{equation*}
\lim_{t\to 0^+} \abs{\frac{\gamma_1(t)}{t} - \frac{\gamma_2(t)}{t}} = 0,
\end{equation*}
since both \(\gamma_1\) and \(\gamma_2\) have finite initial velocity.

Then, by the Squeeze Theorem,
\begin{equation*}
\lim_{t\to 0^+} \abs{\frac{\tilde\gamma_1(t)}{t} - \frac{\tilde\gamma_2(t)}{t}} = 0.
\end{equation*}
By Lemma \ref{lemma: finite initial velocity},  
both \(\tilde\gamma_{1}\) and \(\tilde\gamma_{2}\) 
have finite initial velocity, so it follows that 
\(\tilde\gamma_{1^+}^\prime(0) = \tilde\gamma_{2^+}^\prime(0)\).
\end{proof}

\begin{corollary}
\label{cor: transformation of paths with the same direction}
Let \(\Phi\from(\R^2,0)\to(\R^2,0)\) be a germ of  semialgebraic Lipschitz map,
let \(\gamma_1,\gamma_2\from[0,\epsilon)\to\R^2\) be two continuous semialgebraic paths, with finite initial velocity, 
such that \(\gamma_1(0) = \gamma_2(0) = 0\), and let 
\(\tilde\gamma_i(t)\coloneqq \Phi(\gamma_i(t)),\, i = 1,2\).
If \(\gamma_{2^+}^\prime(0) = c\cdot\gamma_{1^+}^\prime(0)\), with \(c > 0\),
then \(\tilde\gamma_{2^+}^\prime(0) = c\cdot\tilde\gamma_{1^+}^\prime(0)\).
\end{corollary}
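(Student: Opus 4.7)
The plan is to reduce this to Lemma \ref{lemma: transformation of paths with the same velocity} by a simple reparametrization. The only obstacle to applying that lemma directly is that $\gamma_1$ and $\gamma_2$ have initial velocities that differ by the factor $c$, not equal initial velocities; so I will speed up $\gamma_1$ by a factor of $c$ to synchronize them.

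Concretely, I would introduce a third path $\gamma_3\from[0,\epsilon/c)\to\R^2$ defined by $\gamma_3(t)\coloneqq \gamma_1(ct)$. This $\gamma_3$ is a continuous semialgebraic path with $\gamma_3(0)=0$. A direct computation,
\begin{equation*}
\lim_{t\to 0^+}\frac{\gamma_3(t)}{t} = \lim_{t\to 0^+}\frac{\gamma_1(ct)}{t}
= c\cdot \lim_{s\to 0^+}\frac{\gamma_1(s)}{s} = c\cdot \gamma_{1^+}^\prime(0),
\end{equation*}
shows that $\gamma_3$ has finite initial velocity and that $\gamma_{3^+}^\prime(0) = c\cdot\gamma_{1^+}^\prime(0) = \gamma_{2^+}^\prime(0)$, where the last equality is the hypothesis of the corollary.

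Now I would apply Lemma \ref{lemma: transformation of paths with the same velocity} to the pair $\gamma_3,\gamma_2$ (shrinking the common domain if necessary to handle the $\epsilon/c$ versus $\epsilon$ discrepancy), concluding that $\tilde\gamma_{3^+}^\prime(0) = \tilde\gamma_{2^+}^\prime(0)$, where $\tilde\gamma_3(t)\coloneqq \Phi(\gamma_3(t))$. But $\tilde\gamma_3(t) = \Phi(\gamma_1(ct)) = \tilde\gamma_1(ct)$, so by the same reparametrization computation as above,
\begin{equation*}
\tilde\gamma_{3^+}^\prime(0) = c\cdot\tilde\gamma_{1^+}^\prime(0).
\end{equation*}
Combining the two equalities gives $\tilde\gamma_{2^+}^\prime(0) = c\cdot\tilde\gamma_{1^+}^\prime(0)$, as required. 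There is no real obstacle here: the argument is a one-line reparametrization trick, and the hypotheses on $\gamma_1$ (semialgebraic, continuous, based at $0$, finite initial velocity) are all preserved by precomposition with $t\mapsto ct$ for $c>0$.
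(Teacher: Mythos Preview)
Your proof is correct and essentially identical to the paper's own argument: both introduce the reparametrized path $t\mapsto\gamma_1(ct)$, apply Lemma~\ref{lemma: transformation of paths with the same velocity} to match it with $\gamma_2$, and then unwind the reparametrization on the image side.
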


\begin{proof}
Consider the path 
\(\gamma_0(t)\coloneqq \gamma_1(ct)\), \(0\leq t < \epsilon/c\), 
and let \(\tilde\gamma_0(t)\coloneqq\Phi(\gamma_0(t))\).
By Lemma \ref{lemma: transformation of paths with the same velocity}, 
since \(\gamma_{0^+}^\prime(0) = c \gamma_{1^+}^\prime(0) = 
\gamma_{2^+}^\prime(0)\), we have 
\(\tilde\gamma_{0^+}^\prime(0)  = \tilde\gamma_{2^+}^\prime(0)\).
On the other hand, 
\(\tilde\gamma_0(t) = \Phi(\gamma_0(t)) = \Phi(\gamma_1(ct)) = 
\tilde\gamma_1(ct)\), so 
\(\tilde\gamma_{0^+}^\prime(0) = c \tilde\gamma_{1^+}^\prime(0)\).
Hence, 
\(\tilde\gamma_{2^+}^\prime(0) = c\cdot\tilde\gamma_{1^+}^\prime(0)\).
\end{proof}

\begin{corollary}
\label{cor: initial velocities with the same direction (bi-Lipschitz transformation)}
Let \(\Phi\from(\R^2,0)\to(\R^2,0)\) be a germ of  semialgebraic bi-Lipschitz map.
Let \(\gamma_1,\gamma_2\from[0,\epsilon)\to\R^2\) be two continuous semialgebraic paths, with finite initial velocity, 
such that \(\gamma_1(0) = \gamma_2(0) = 0\), and let 
\(\tilde\gamma_i(t)\coloneqq \Phi(\gamma_i(t)),\, i = 1,2\).
The initial velocities of the paths \(\gamma_1\) and \(\gamma_2\) have the same direction if and only if the initial velocities of the paths 
\(\tilde\gamma_1\) and \(\tilde\gamma_2\) have the same direction.
\end{corollary}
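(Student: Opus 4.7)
The plan is to deduce this corollary from Corollary \ref{cor: transformation of paths with the same direction} applied twice: once with \(\Phi\) (giving the forward implication) and once with \(\Phi^{-1}\) (giving the converse). The bi-Lipschitz hypothesis is exactly what ensures that \(\Phi^{-1}\) is also a germ of semialgebraic Lipschitz map fixing the origin, which is the symmetric datum needed to reverse the implication. A noteworthy detail in the previous corollary is that the scalar \(c\) is transported \emph{unchanged} from \(\gamma_{2^+}^\prime(0) = c\cdot\gamma_{1^+}^\prime(0)\) to \(\tilde\gamma_{2^+}^\prime(0) = c\cdot\tilde\gamma_{1^+}^\prime(0)\); this is what makes a simple two-sided argument possible.

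For the \((\Rightarrow)\) direction, I would observe that if \(\gamma_{1^+}^\prime(0)\) and \(\gamma_{2^+}^\prime(0)\) have the same direction, then by definition \(\gamma_{2^+}^\prime(0) = c\cdot\gamma_{1^+}^\prime(0)\) for some \(c > 0\), and Corollary \ref{cor: transformation of paths with the same direction} immediately yields \(\tilde\gamma_{2^+}^\prime(0) = c\cdot\tilde\gamma_{1^+}^\prime(0)\), so the initial velocities of \(\tilde\gamma_1\) and \(\tilde\gamma_2\) have the same direction.

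For the \((\Leftarrow)\) direction, the key step is to verify that the roles of \(\gamma_i\) and \(\tilde\gamma_i\) can be swapped by using \(\Phi^{-1}\). Concretely: since \(\Phi\) is a germ of semialgebraic bi-Lipschitz map fixing \(0\), so is \(\Phi^{-1}\); in particular \(\Phi^{-1}\) is semialgebraic and Lipschitz near \(0\), with \(\Phi^{-1}(0) = 0\). The paths \(\tilde\gamma_i = \Phi\circ\gamma_i\) are continuous and semialgebraic (as compositions of such maps), they satisfy \(\tilde\gamma_i(0) = 0\), and they have finite initial velocity by Lemma \ref{lemma: finite initial velocity}. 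Moreover, \(\Phi^{-1}\circ\tilde\gamma_i = \gamma_i\). Thus the hypotheses of Corollary \ref{cor: transformation of paths with the same direction} are satisfied with \(\Phi^{-1}\) in place of \(\Phi\) and with \(\tilde\gamma_1, \tilde\gamma_2\) in place of \(\gamma_1, \gamma_2\); applying it yields that whenever \(\tilde\gamma_{1^+}^\prime(0)\) and \(\tilde\gamma_{2^+}^\prime(0)\) have the same direction, so do \(\gamma_{1^+}^\prime(0) = (\Phi^{-1}\circ\tilde\gamma_1)_+^\prime(0)\) and \(\gamma_{2^+}^\prime(0) = (\Phi^{-1}\circ\tilde\gamma_2)_+^\prime(0)\).

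There is no substantive obstacle in this proof; the only point that requires a little care is the verification that \(\Phi^{-1}\) together with the transformed paths \(\tilde\gamma_1, \tilde\gamma_2\) genuinely fit the hypothesis list of the previous corollary (semialgebraicity, continuity, vanishing at \(0\), and finite initial velocity). Once that bookkeeping is done, the symmetry of the bi-Lipschitz situation delivers the equivalence essentially for free.
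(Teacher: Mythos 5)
The paper states this corollary without proof, treating it as an immediate consequence of Corollary~\ref{cor: transformation of paths with the same direction}; your argument — apply that corollary once with $\Phi$ for the forward implication and once with $\Phi^{-1}$ (which is again a germ of semialgebraic Lipschitz map fixing the origin, and satisfies $\Phi^{-1}\circ\tilde\gamma_i=\gamma_i$) for the converse — is exactly the intended one, and your verification that $\tilde\gamma_1,\tilde\gamma_2$ meet all the hypotheses (continuity, semialgebraicity, vanishing at $0$, finite initial velocity via Lemma~\ref{lemma: finite initial velocity}) is correct and complete.
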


\section{\(\beta\)-isomorphisms and the \(\beta\)-transform}
\label{section: beta-isomorphisms and the beta-transform}

A germ of semialgebraic bi-Lipschitz map 
\(\Phi = (\Phi_1,\Phi_2)\from (\R^2,0)\to(\R^2,0)\) is said to be a 
{\it \(\beta\)-isomorphism of degree \(d\geq 1\)} if the following conditions are satisfied:
\begin{enumerate}[label = \roman*.]
\item There exist \(\beta\)-quasihomogeneous polynomials 
\(F(X,Y)\) and \(G(X,Y)\) of degree \(d\) such that 
\(G\circ \Phi = F\).

\item \(\lim_{x\to 0^+}\Phi_1(x,0)/x\neq 0\) \ and 
\ \(\lim_{x\to 0^-}\Phi_1(x,0)/x\neq 0\)
\end{enumerate}
When there is no danger of confusion, we shall simply say \(\beta\)-isomorphism, omitting the reference to the degree \(d\), 
which is always assumed to be \(\geq 1\). 

\begin{remark}
\label{rk: finite nonzero initial velocity}
For any germ of  semialgebraic bi-Lipschitz homeomorphism 
\(\Phi\from(\R^2,0)\to(\R^2,0)\), the path \(\Phi(x,0)\), \(0 \leq x < \epsilon\), has finite non-zero initial velocity. 

In fact, it is immediate from Lemma \ref{lemma: finite initial velocity} that the path \(\Phi(x,0)\), \(0 \leq x < \epsilon\), has finite initial velocity. On the other hand, there exists \(A > 0\) such that \(\Phi(x,0)\geq A\abs{x}\),  because 
\(\Phi\) is bi-Lipschitz and \(\Phi(0) = 0\); so we have 
\(\abs{\Phi(x,0)/x} \geq A\), for \(x\neq 0\). 
Hence, \(\lim_{x\to 0^+} \Phi(x,0)/x \neq 0\).

Similarly, we can prove that the path \(\Phi(-x,0)\), \(0 \leq x < \epsilon\), also has finite non-zero initial velocity. 
\end{remark}

\begin{remark}
\label{rk: equality of initial velocities}
Let \(\Phi\from (\R^2,0)\to(\R^2,0)\) be any germ of semialgebraic bi-Lipschitz map. It is immediate from Lemma \ref{lemma: transformation of paths with the same velocity} that for all \(t\in\R\), we have:
\begin{enumerate}[label=\roman*.]
\item The initial velocity of the path 
\(\Phi(x,tx^\beta)\), \(0\leq x < \epsilon\),
is equal to the initial velocity of the path
\(\Phi(x,0)\), \(0\leq x <\epsilon\).
\item The initial velocity of the path 
\(\Phi(-x,tx^\beta)\), \(0\leq x < \epsilon\),
is equal to the initial velocity of the path
\(\Phi(-x,0)\), \(0\leq x <\epsilon\).
\end{enumerate}

Hence, for all \(t\in\R\), 
\begin{equation}
\label{eq: equality of first components of initial velocities}
\lim_{x\to 0^+}\frac{\Phi_1(x,tx^\beta)}{x} = 
\lim_{x\to 0^+}\frac{\Phi_1(x,0)}{x}
\quad\text{and}\quad
\lim_{x\to 0^+}\frac{\Phi_1(-x,tx^\beta)}{x} = 
\lim_{x\to 0^+}\frac{\Phi_1(-x,0)}{x}\,.
\end{equation}

Also, for all \(t\in\R\), 
\begin{equation}
\label{eq: equality of second components of initial velocities}
\lim_{x\to 0^+}\frac{\Phi_2(x,tx^\beta)}{x} = 
\lim_{x\to 0^+}\frac{\Phi_2(x,0)}{x}
\quad\text{and}\quad
\lim_{x\to 0^+}\frac{\Phi_2(-x,tx^\beta)}{x} = 
\lim_{x\to 0^+}\frac{\Phi_2(-x,0)}{x}\,.
\end{equation}
\end{remark}

\begin{proposition}
\label{prop: order of growth of the second coordinate of a beta-isomorphism}
Let \(\Phi\from (\R^2,0)\to(\R^2,0)\) be a germ of semialgebraic bi-Lipschitz homeomorphism. Suppose that there exist \(\beta\)-quasihomogeneous polynomials \(F(X,Y)\) and \(G(X,Y)\) of degree \(d\geq 1\) such that 
\(G\circ \Phi = F\). We have:
\begin{enumerate}[label=\roman*.]
\item If \(\Phi\) is a \(\beta\)-isomorphism then, for each \(t\in\R\), 
\(\Phi_2(x,t\abs{x}^\beta) = O(\abs{x}^\beta)\) as \(x\to 0\).
\item If there exist \(t_0,t_1\in\R\) such that
\begin{equation*}
\Phi_2(x,t_0\abs{x}^\beta) = O(\abs{x}^\beta) \text{ as \(x\to 0^+\)}
\quad{and}\quad
\Phi_2(x,t_1\abs{x}^\beta) = O(\abs{x}^\beta) \text{ as \(x\to 0^-\),} 
\end{equation*}
then \(\Phi\) is a \(\beta\)-isomorphism.
\end{enumerate}
\end{proposition}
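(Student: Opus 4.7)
The plan is to combine the Lipschitz bound for $\Phi$ with the rescaling behavior of $\beta$-quasihomogeneous polynomials. Since $\Phi$ is Lipschitz with $\Phi(0) = 0$, there exists $K > 0$ with
\[
\abs{\Phi_2(x, t\abs{x}^\beta) - \Phi_2(x, 0)} \leq K \abs{t} \abs{x}^\beta,
\]
so in both parts it suffices to establish the one-sided statements $\Phi_2(x, 0) = O(\abs{x}^\beta)$ as $x \to 0^\pm$.

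For part (i), I would evaluate $G \circ \Phi = F$ along the $X$-axis and rescale. For $x > 0$ put $u(x) \coloneqq \Phi_1(x, 0)/x$ and $v(x) \coloneqq \Phi_2(x, 0)/x^\beta$. The $\beta$-quasihomogeneity of $G$, applied with $\lambda = x$, gives
\[
G(\Phi_1(x, 0), \Phi_2(x, 0)) = G\bigl(x\cdot u(x),\, x^\beta\cdot v(x)\bigr) = x^d G(u(x), v(x)),
\]
while $F(x, 0) = a x^d$, where $a$ is the coefficient of $X^d$ in $F$. Hence $G(u(x), v(x)) = a$ identically for small $x > 0$. The $\beta$-isomorphism hypothesis yields $u(x) \to u_0 \neq 0$, and $v$ is a semialgebraic function of $x$, so it has a well-defined one-sided limit in $\R\cup\{\pm\infty\}$. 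This limit cannot be $\pm\infty$: writing $G(X, Y) = \sum b_{ij} X^i Y^j$ (sum over $i + j\beta = d$, $j\geq 0$), the polynomial $Y \mapsto G(u_0, Y)$ is non-constant thanks to a term with $j\geq 1$ whose coefficient does not vanish at $u_0$, so its values escape to $\pm\infty$ as $Y$ does, contradicting $G(u(x), v(x)) = a$. Hence $v$ stays bounded and $\Phi_2(x, 0) = O(x^\beta)$. The case $x \to 0^-$ is symmetric, using the left-hand $\beta$-isomorphism condition.

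For part (ii), the hypothesis together with the Lipschitz reduction gives $\Phi_2(x, 0) = O(\abs{x}^\beta)$ as $x \to 0^\pm$. Since $\Phi^{-1}$ is also Lipschitz and $\Phi(0) = 0$, there exists $c > 0$ with $\abs{\Phi(x, 0)} \geq c\abs{x}$, whence
\[
\Phi_1(x, 0)^2 \;\geq\; c^2 x^2 - \Phi_2(x, 0)^2 \;=\; c^2 x^2 - O(\abs{x}^{2\beta}) \;\geq\; \tfrac{c^2}{2} x^2
\]
for $\abs{x}$ sufficiently small (using $\beta > 1$). Thus $\abs{\Phi_1(x, 0)/x}$ is bounded below by a positive constant near $0$; since $\Phi_1(x, 0)/x$ is semialgebraic, its one-sided limits exist and are nonzero. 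This verifies condition (ii) in the definition of $\beta$-isomorphism.

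The delicate step is in part (i), where one must rule out $v(x) \to \pm\infty$. My argument rests on $G(u_0, \cdot)$ being a non-constant polynomial, i.e.\ on $G$ having genuine $Y$-dependence at $u_0$; the degenerate alternative $G(X, Y) = b X^d$ would place no constraint on $\Phi_2$ via the equation $G\circ\Phi = F$, and it should be ruled out (or addressed separately) by the geometric setup surrounding this proposition in the paper.
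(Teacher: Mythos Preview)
Your argument is correct, and your closing caveat is well placed: the case $G(X,Y)=cX^d$ is a genuine degeneracy that the equation $G\circ\Phi=F$ leaves $\Phi_2$ unconstrained, and the paper's own proof tacitly excludes it too (it asserts that $H_{t,x}(y)\coloneqq G(\tilde x,y)-f(t)$ is a \emph{nonconstant} polynomial in $y$, which fails precisely when $G$ has no $Y$-term).

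For part (ii) your proof is essentially the paper's: from the hypothesis you get $\Phi_2(x,0)=O(\abs{x}^\beta)$, hence $\Phi_2(x,0)/x\to 0$, and then the bi-Lipschitz lower bound $\abs{\Phi(x,0)}\geq c\abs{x}$ forces the first-coordinate limits to be nonzero. The paper phrases the last step via its Remark on nonzero initial velocity, but the content is identical.

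For part (i) your route differs from the paper's in two respects. First, you reduce immediately to $t=0$ via the Lipschitz estimate $\abs{\Phi_2(x,t\abs{x}^\beta)-\Phi_2(x,0)}\leq K\abs{t}\abs{x}^\beta$; the paper instead works directly at a general $t$. Second, and more substantively, after rescaling the identity $G\circ\Phi=F$ to $G(u(x),v(x))=a$, you argue by contradiction: if $v(x)\to\pm\infty$ then, since the leading $Y$-coefficient $c_m u(x)^{d-rm}$ of $G(u(x),\cdot)$ tends to the nonzero number $c_m u_0^{d-rm}$, the left-hand side would blow up. The paper instead observes that $v(x)$ is a root of the polynomial $H_{t,x}(y)=G(\tilde x,y)-f(t)$ and invokes Cauchy's bound on the roots, obtaining an explicit upper bound for $\abs{v(x)}$ in terms of $\tilde x$ and then using $\tilde x\to u_0\neq 0$. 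Your limit argument is shorter and avoids the explicit root bound; the paper's version has the mild advantage of giving a quantitative estimate directly, which it reuses verbatim later when proving that any $\Phi$ with $G\circ\Phi=F$ is automatically a $\beta$-isomorphism once $V(F)$ meets both half-planes.
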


\begin{proof}
Suppose that \(\Phi\) is a \(\beta\)-isomorphism.
Let \(\beta = r/s\), with \(r > s > 0\), and \(\gcd(r,s) = 1\). 
Then we have
\begin{equation*}
G(X,Y) = \sum_{k=0}^m c_k X^{d-rk}Y^{sk},
\end{equation*}
where the coefficients \(c_k\) are real numbers, \(c_m\neq 0\), and 
\(m\leq\lfloor{d/r}\rfloor\).

By hypothesis, \(G(\Phi(x,y)) = F(x,y)\). Thus, for any \(t\in\R\) and \(x \neq 0\) sufficiently small,
\begin{equation*}
G(\Phi(x,t \abs{x}^\beta)) = F(x,t \abs{x}^\beta)\,.
\end{equation*}
Since the polynomials \(F\) and \(G\) are \(\beta\)-quasihomogeneous, this implies that
\begin{equation*}
G\left(\frac{\Phi_1(x,t\abs{x}^\beta)}{\abs{x}},
          \frac{\Phi_2(x,t\abs{x}^\beta)}{\abs{x}^\beta}\right) = f(t),
\end{equation*}
where
\begin{equation*}
f(t) = 
\begin{cases}
F(1,t), & \text{ if } \,x > 0\\
F(-1,t), & \text{ if } \,x < 0
\end{cases}
\ .
\end{equation*}

Hence, for each \(t\in\R\) and \(x\neq 0\) sufficiently small, 
\(y = \Phi_2(x,t \abs{x}^\beta)/\abs{x}^\beta\)
is a zero of the nonconstant polynomial
\begin{equation*}
H_{t,x}(y) \coloneqq G(\tilde x, y) - f(t) \in \R[y],
\end{equation*}
where \(\tilde x \coloneqq \Phi_1(x,t \abs{x}^\beta)/\abs{x}\).
Applying Cauchy's bound on the roots of a polynomial, we obtain
\begin{equation}
\label{eq: Cauchy's bound for Phi_2}
\abs{\frac{\Phi_2(x,t \abs{x}^\beta)}{\abs{x}^\beta}} \leq
1 + \max\left\{ 
     \abs{\frac{c_{m-1}}{c_m}}\abs{\tilde x}^r, \ldots, 
     \abs{\frac{c_{1}}{c_m}}\abs{\tilde x}^{r(m-1)},
     \abs{\frac{c_0{\tilde x}^d - f(t)}{c_m{\tilde x}^{d - rm}}}
     \right\} \,.
\end{equation}

Since \(\Phi\) is a \(\beta\)-isomorphism, we have
\begin{equation*}
\lim_{x\to 0^+}\frac{\Phi_1(x,0)}{x}\neq 0
\quad\text{and}\quad
\lim_{x\to 0^+}\frac{\Phi_1(-x,0)}{x}\neq 0
\end{equation*}

Then, by equation (\ref{eq: equality of first components of initial velocities}), we obtain
\begin{equation}
\label{eq: nonzero x tilde}
\lim_{x\to 0^+}\frac{\Phi_1(x,t\abs{x}^\beta)}{\abs{x}}\neq 0
\quad\text{and}\quad
\lim_{x\to 0^-}\frac{\Phi_1(x,t\abs{x}^\beta)}{\abs{x}}\neq 0,
\quad\text{for all }t\in\R.
\end{equation}

From (\ref{eq: Cauchy's bound for Phi_2}) and (\ref{eq: nonzero x tilde}), it follows that, for each \(t\in\R\), \(\Phi_2(x,t \abs{x}^\beta)/\abs{x}^\beta\) is bounded for \(x\neq 0\) sufficiently small. This proves \((i)\).

Now, suppose that for certain \(t_0, t_1\in\R\), 
\(\Phi_2(x,t_0 \abs{x}^\beta)/\abs{x}^\beta\) 
is bounded for \(x > 0\) sufficiently small,
and \(\Phi_2(x,t_1 \abs{x}^\beta)/\abs{x}^\beta\) 
is bounded for \(x < 0\) sufficiently small.
Then 
\begin{equation*}
\lim_{x\to 0^+}\frac{\Phi_2(x, t_0 x^\beta)}{x} = 
\lim_{x\to 0^+}
          \frac{\Phi_2(x, t_0 x^\beta)}{x^\beta} \cdot x^{\beta - 1}
          = 0
\end{equation*}
and
\begin{equation*}
\lim_{x\to 0^+}\frac{\Phi_2(-x, t_1 x^\beta)}{x} = 
\lim_{x\to 0^+}
          \frac{\Phi_2(-x, t_1 x^\beta)}{x^\beta} \cdot x^{\beta - 1}
          = 0
\end{equation*}
Thus, by (\ref{eq: equality of second components of initial velocities}), \begin{equation*}
\lim_{x\to 0^+}\frac{\Phi_2(x, 0)}{x} = 0
\quad\text{and}\quad
\lim_{x\to 0^+}\frac{\Phi_2(-x, 0)}{x} = 0\, .
\end{equation*}

On the other hand, by Remark \ref{rk: finite nonzero initial velocity},
the paths 
\(\Phi(x,0)\), \(0\leq x <\epsilon\), 
and
\(\Phi(-x,0)\), \(0\leq x <\epsilon\),
both have (finite) non-zero initial velocity.
Hence, 
\begin{equation*}
\lim_{x\to 0^+}\frac{\Phi_1(x,0)}{x} \neq 0
\quad\text{and}\quad
\lim_{x\to 0^+}\frac{\Phi_1(-x,0)}{x} \neq 0\, .
\end{equation*}
Therefore, \(\Phi\) is a \(\beta\)-isomorphism.
\end{proof}

\begin{proposition}
\label{prop: opposite horizontal initial velocities (beta-isomorphism)}
If \(\Phi\from (\R^2,0)\to(\R^2,0)\) is a \(\beta\)-isomorphism then the initial velocities of the paths \(\Phi(x,0)\), \(0\leq x < \epsilon\), and \(\Phi(-x,0)\), \(0\leq x < \epsilon\), are horizontal\/\footnote{We say that a vector 
\((v_1,v_2)\in\R^2\) is {\it horizontal} if \(v_1\neq 0\) and \(v_2 = 0\).} and have opposite directions.
\end{proposition}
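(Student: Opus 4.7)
Let $v_+ \coloneqq \lim_{x \to 0^+} \Phi(x,0)/x$ and $v_- \coloneqq \lim_{x \to 0^+} \Phi(-x,0)/x$; by Remark \ref{rk: finite nonzero initial velocity}, both limits exist as nonzero finite vectors. The plan is to prove horizontality first, and then opposite direction, each in a short step.

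For horizontality, I would specialize part (i) of Proposition \ref{prop: order of growth of the second coordinate of a beta-isomorphism} to $t = 0$, obtaining $\Phi_2(x,0) = O(|x|^\beta)$ as $x \to 0$. Since $\beta > 1$, this immediately gives $\lim_{x \to 0^+} \Phi_2(x,0)/x = 0$ and $\lim_{x \to 0^-} \Phi_2(x,0)/x = 0$. Hence the second component of $v_+$ vanishes, and after the substitution $y = -x$ the second component of $v_-$ equals $-\lim_{y \to 0^-} \Phi_2(y,0)/y = 0$ as well. Since both vectors are nonzero, their first components must be nonzero, so $v_+$ and $v_-$ are horizontal.

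For the opposite-direction claim, I would apply Corollary \ref{cor: initial velocities with the same direction (bi-Lipschitz transformation)} with the paths $\gamma_1(x) \coloneqq (x,0)$ and $\gamma_2(x) \coloneqq (-x,0)$, $0 \leq x < \epsilon$. Their initial velocities $(1,0)$ and $(-1,0)$ do not point in the same direction, so the corollary forces the initial velocities of $\Phi\circ\gamma_1$ and $\Phi\circ\gamma_2$, namely $v_+$ and $v_-$, also not to point in the same direction. Any two nonzero horizontal vectors in $\R^2$ either share the same direction or are antiparallel, so this leaves no option but that $v_+$ and $v_-$ point in opposite directions.

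No step looks genuinely delicate: Step 1 is an immediate specialization of the preceding proposition, and Step 2 is a direct invocation of the bi-Lipschitz rigidity corollary from Section 3. The only thing requiring care is sign-bookkeeping when computing $v_-$, where the reparametrization $y = -x$ trades the path $\Phi(-x,0)$, $x \geq 0$, for a one-sided limit at $y = 0^-$ to which both the defining condition (ii) of a $\beta$-isomorphism and the previous proposition directly apply. If one preferred a hands-on alternative to Corollary \ref{cor: initial velocities with the same direction (bi-Lipschitz transformation)} in Step 2, one could argue by contradiction: if $v_+$ and $v_-$ had the same direction, taking $c \coloneqq |v_+|/|v_-| > 0$ would yield points $(s,0)$ and $(-cs,0)$ whose $\Phi$-images differ by $o(s)$ while their preimages are separated by $(1 + c)s$, violating the lower bi-Lipschitz bound on $\Phi$.
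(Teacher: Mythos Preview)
Your proof is correct and follows essentially the same approach as the paper: you use Proposition \ref{prop: order of growth of the second coordinate of a beta-isomorphism} at $t=0$ together with Remark \ref{rk: finite nonzero initial velocity} to get horizontality, and then Corollary \ref{cor: initial velocities with the same direction (bi-Lipschitz transformation)} applied to $\gamma_1(x)=(x,0)$, $\gamma_2(x)=(-x,0)$ to force opposite directions. The only addition is your optional hands-on alternative for Step 2, which is sound but not needed.
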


\begin{proof}
Let \(\Phi\) be a \(\beta\)-isomorphism. By Proposition \ref{prop: order of growth of the second coordinate of a beta-isomorphism}, we have
\(\Phi_2(x,0) = O(\abs{x}^\beta)\) as \(x\to 0\). Then, 
\(\lim_{x\to 0^+} \Phi_2(x,0)/x = \lim_{x\to 0^+} \Phi_2(-x,0)/x = 0\). 
Since the paths \(\Phi(x,0)\), \(0\leq x < \epsilon\), and 
\(\Phi(-x,0)\), \(0\leq x < \epsilon\), both have nonzero finite initial velocity
(Remark \ref{rk: finite nonzero initial velocity}), it follows that both of them have horizontal initial velocity. By Corollary \ref{cor: initial velocities with the same direction (bi-Lipschitz transformation)}, the initial velocities of the paths
\(\Phi(x,0)\), \(0\leq x < \epsilon\), and \(\Phi(-x,0)\), \(0\leq x < \epsilon\), do not have the same direction. Since they are both horizontal, they have opposite directions. 
\end{proof}

It follows from Proposition \ref{prop: opposite horizontal initial velocities (beta-isomorphism)} that each \(\beta\)-isomorphism \(\Phi\from(\R^2,0)\to(\R^2,0)\) satisfies one of the following conditions:
\begin{enumerate}[label = (\Alph*)]
\item \label{condition: direct beta-isomorphism}
\(\lim_{x\to 0^+} \Phi_1(x,0)/x > 0\)
\,and \,\,\(\lim_{x\to 0^-} \Phi_1(x,0)/x > 0\)
\item \label{condition: reverse beta-isomorphism}
 \(\lim_{x\to 0^+} \Phi_1(x,0)/x < 0\)
\,and \,\,\(\lim_{x\to 0^-} \Phi_1(x,0)/x < 0\)
\end{enumerate}

A \(\beta\)-isomorphism \(\Phi\) is said to be {\it direct} if it satisfies 
\ref{condition: direct beta-isomorphism}, and it is said to be {\it reverse}
if it satisfies \ref{condition: reverse beta-isomorphism}.

\begin{proposition}
\label{prop: beta-isomorphism, horizontal initial velocity}
Let \(\Phi\from (\R^2,0)\to(\R^2,0)\) be a germ of semialgebraic bi-Lipschitz homeomorphism. Suppose that there exist \(\beta\)-quasihomogeneous polynomials \(F(X,Y)\) and \(G(X,Y)\) of the same degree \(d\geq 1\) such that \(G\circ \Phi = F\). The following assertions are equivalent:
\begin{enumerate}[label=\roman*.]
\item \(\Phi\) is a \(\beta\)-isomorphism
\item For any continuous semialgebraic path 
\(\gamma\from[0,\epsilon)\to \R^2\), if the initial velocity of \(\gamma\) is horizontal, then the initial velocity of 
\(\tilde\gamma \coloneqq \Phi\circ\gamma\) is horizontal.
\end{enumerate}
\end{proposition}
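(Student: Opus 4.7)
The plan is to handle the two implications separately, using the results of Section 3 and Proposition \ref{prop: opposite horizontal initial velocities (beta-isomorphism)}.

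For \((i)\implies(ii)\), I would assume \(\Phi\) is a \(\beta\)-isomorphism and let \(\gamma\from[0,\epsilon)\to\R^2\) be a continuous semialgebraic path with horizontal initial velocity \(\gamma_+^\prime(0) = (v_1,0)\), \(v_1\neq 0\). The idea is to compare \(\gamma\) with a straight-line test path whose image under \(\Phi\) is already understood. If \(v_1 > 0\), the path \(\gamma_+(t) \coloneqq (t,0)\) has initial velocity \((1,0)\), which points in the same direction as \(\gamma_+^\prime(0)\); by Corollary \ref{cor: transformation of paths with the same direction}, \((\Phi\circ\gamma)_+^\prime(0) = v_1\cdot(\Phi\circ\gamma_+)_+^\prime(0)\), and by Remark \ref{rk: finite nonzero initial velocity} together with Proposition \ref{prop: opposite horizontal initial velocities (beta-isomorphism)}, the vector \((\Phi\circ\gamma_+)_+^\prime(0) = \lim_{x\to 0^+}\Phi(x,0)/x\) is nonzero and horizontal. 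Hence \(\tilde\gamma\) has horizontal initial velocity. If \(v_1 < 0\), one argues identically, comparing \(\gamma\) with \(\gamma_-(t)\coloneqq(-t,0)\) and invoking the same proposition to get a nonzero horizontal initial velocity.

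For \((ii)\implies(i)\), the point is simply to feed suitable test paths into the hypothesis. Apply (ii) to \(\gamma_+(t)=(t,0)\): its initial velocity \((1,0)\) is horizontal, so \((\Phi\circ\gamma_+)_+^\prime(0) = \lim_{x\to 0^+}\Phi(x,0)/x\) must be horizontal, which by definition forces \(\lim_{x\to 0^+}\Phi_1(x,0)/x\neq 0\). Apply (ii) analogously to \(\gamma_-(t)=(-t,0)\) to obtain \(\lim_{x\to 0^-}\Phi_1(x,0)/x\neq 0\). These are exactly the two conditions in clause (ii) of the definition of a \(\beta\)-isomorphism, and the remaining conditions (semialgebraic bi-Lipschitz and \(G\circ\Phi=F\) for some \(\beta\)-quasihomogeneous \(F,G\) of the same degree \(d\)) are part of the hypothesis of the proposition.

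No step of the argument requires new analytic input: once the initial velocities of \(\Phi(x,0)\) and \(\Phi(-x,0)\) are known to be nonzero and horizontal, the transformation lemmas of Section 3 do the rest. The only point that needs mild care is the sign handling in \((i)\implies(ii)\), i.e.\ choosing the correct test path \(\gamma_\pm\) depending on the sign of \(v_1\) and verifying that the resulting initial velocity is still horizontal (not just finite). I do not expect a substantial obstacle here.
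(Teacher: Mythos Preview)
Your proposal is correct and follows essentially the same approach as the paper: both directions use the test paths \(t\mapsto(\pm t,0)\), invoke Proposition~\ref{prop: opposite horizontal initial velocities (beta-isomorphism)} to know their images under \(\Phi\) have horizontal initial velocity, and then transfer this to an arbitrary \(\gamma\) via the path-transformation results of Section~3. The only cosmetic difference is that the paper cites Corollary~\ref{cor: initial velocities with the same direction (bi-Lipschitz transformation)} (same direction is preserved under bi-Lipschitz), whereas you cite the slightly sharper Corollary~\ref{cor: transformation of paths with the same direction} to get the explicit scalar relation; either works here.
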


\begin{proof}
First, we prove that \((i) \implies (ii)\). Suppose that \(\Phi\) is a 
\(\beta\)-isomorphism. 

Take any continuous semialgebraic path \(\gamma\from [0,\epsilon)\to \R^2\) whose initial velocity is horizontal. Since the initial velocities of the paths
\(\alpha_{+}, \alpha_{-}\from [0,\epsilon) \to \R^2\), given by 
\(\alpha_{+}(t) = (t,0)\) and \(\alpha_{-}(t) = (-t,0)\), 
are both horizontal and have opposite directions, the initial velocity of 
\(\gamma\) has either the same direction as the initial velocity of \(\alpha_{+}\), or the same direction as the initial velocity of \(\alpha_{-}\).

By Corollary \ref{cor: initial velocities with the same direction (bi-Lipschitz transformation)}, this implies that the initial velocity of \(\tilde\gamma \coloneqq \Phi\circ\gamma\) has either the same direction as the initial velocity of \(\tilde\alpha_{+}\coloneqq \Phi\circ \alpha_{+}\), or the same direction as the initial velocity of \(\tilde\alpha_{-}\coloneqq \Phi\circ \alpha_{-}\). In any case, the initial velocity of \(\tilde\gamma\) is horizontal:
by Proposition \ref{prop: opposite horizontal initial velocities (beta-isomorphism)}, the initial velocities of \(\tilde\alpha_{+}\) and \(\tilde\alpha_{-}\) are both horizontal (and have opposite directions) because \(\Phi\) is a 
\(\beta\)-isomorphism.

Now, we prove that \((ii)\implies (i)\). Assume that condition \((ii)\) is satisfied.
We show that \(\lim_{x\to 0^+}\Phi_1(x,0)/x\neq 0\) \ and 
\ \(\lim_{x\to 0^-}\Phi_1(x,0)/x\neq 0\).

Since the initial velocity of the path \(\alpha_{+}\) is horizontal, it follows from our assumption that the initial velocity of the path 
\(\tilde\alpha_{+} = \Phi\circ \alpha_{+}\) is horizontal. 
Hence, \(\lim_{x\to 0^+}\Phi_1(x,0)/x\neq 0\).
Similarly, since the initial velocity of the path \(\alpha_{-}\) is horizontal, it follows from our assumption that the initial velocity of the path 
\(\tilde\alpha_{-} = \Phi\circ \alpha_{-}\) is horizontal. 
Hence, \(\lim_{x\to 0^+}\Phi_1(-x,0)/x\neq 0\). And since
\(\lim_{x\to 0^-}\Phi_1(x,0)/x = - \lim_{x\to 0^+}\Phi_1(-x,0)/x\), we have
\(\lim_{x\to 0^-}\Phi_1(x,0)/x\neq 0\).
\end{proof}

Let \(\Phi\) and \(\Psi\) be \(\beta\)-isomorphisms of degree \(d\).  We say that \(\Psi\) is {\it composable} with \(\Phi\) if there exist 
\(\beta\)-quasihomogeneous polynomials \(F(X,Y),G(X,Y),H(X,Y)\) of the degree \(d\) such that \(H\circ\Psi = G\) \ and \ \(G\circ \Phi = F\).

\begin{proposition}
Let \(\Phi,\Psi\from (\R^2,0)\to (\R^2,0)\) be \(\beta\)-isomorphisms of degree 
\(d\). If \(\Psi\) is composable with \(\Phi\), then \(\Psi\circ\Phi\) is a 
\(\beta\)-isomorphism of degree \(d\).
\end{proposition}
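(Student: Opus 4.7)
The plan is to verify both defining conditions of a \(\beta\)-isomorphism for \(\Psi\circ\Phi\), using the characterization of \(\beta\)-isomorphisms by preservation of horizontal initial velocities (Proposition \ref{prop: beta-isomorphism, horizontal initial velocity}) to handle condition (ii) without direct calculation.

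First, I would note the easy structural facts. The composition \(\Psi\circ\Phi\) is a germ of semialgebraic bi-Lipschitz map, since semialgebraicity and the bi-Lipschitz property are both preserved under composition. For condition (i) of the definition, composability gives \(\beta\)-quasihomogeneous polynomials \(F,G,H\) of degree \(d\) with \(G\circ\Phi = F\) and \(H\circ\Psi = G\); hence
\begin{equation*}
H\circ(\Psi\circ\Phi) \;=\; (H\circ\Psi)\circ\Phi \;=\; G\circ\Phi \;=\; F,
\end{equation*}
with \(F\) and \(H\) both \(\beta\)-quasihomogeneous of degree \(d\). So condition (i) is satisfied for \(\Psi\circ\Phi\) with respect to the pair \((F,H)\).

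For condition (ii), the cleanest route is Proposition \ref{prop: beta-isomorphism, horizontal initial velocity}, which is applicable precisely because condition (i) has just been established. Let \(\gamma\from [0,\epsilon)\to\R^2\) be any continuous semialgebraic path with horizontal initial velocity. Since \(\Phi\) is a \(\beta\)-isomorphism and \(G\circ\Phi = F\), the same proposition (direction \((i)\implies (ii)\)) tells us that \(\Phi\circ\gamma\) has horizontal initial velocity. Applying it once more, now to \(\Psi\) (with \(H\circ\Psi = G\)), the path \(\Psi\circ(\Phi\circ\gamma) = (\Psi\circ\Phi)\circ\gamma\) also has horizontal initial velocity. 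Thus \(\Psi\circ\Phi\) preserves horizontal initial velocities, and the reverse direction \((ii)\implies (i)\) of Proposition \ref{prop: beta-isomorphism, horizontal initial velocity} yields that \(\Psi\circ\Phi\) is a \(\beta\)-isomorphism of degree \(d\).

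There is no real obstacle in this argument: the substantive content has been absorbed into the characterization proposition, and the proof reduces to chaining two applications of it along with associativity of composition. One stylistic point worth checking is that condition (i) for \(\Psi\circ\Phi\) must be in place before invoking the characterization, which is why I would establish the polynomial identity \(H\circ(\Psi\circ\Phi)=F\) first.
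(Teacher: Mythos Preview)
Your proof is correct and follows essentially the same approach as the paper's: establish \(H\circ(\Psi\circ\Phi)=F\) from composability, then chain two applications of Proposition~\ref{prop: beta-isomorphism, horizontal initial velocity} to show that \(\Psi\circ\Phi\) preserves horizontal initial velocities. The only addition you make is the explicit remark that the composition is again a germ of semialgebraic bi-Lipschitz map, which the paper leaves implicit.
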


\begin{proof}
Suppose that \(\Psi\) is composable with \(\Phi\). Then, there exist 
\(\beta\)-quasihomogeneous polynomials \(F(X,Y),G(X,Y),H(X,Y)\) of the same degree \(d\geq 1\) such that \(H\circ\Psi = G\) and \/\(G\circ \Phi = F\),
so \(H\circ(\Psi\circ\Phi) = F\). Now, take any continuous semialgebraic path 
\(\gamma\from [0,\epsilon)\to \R^2\) with horizontal initial velocity.
Since \(\Phi\) is a \(\beta\)-isomorphism, the initial velocity of the 
(continuous, semialgebraic) path \(\tilde\gamma\coloneqq \Phi\circ\gamma\)
is horizontal; and since \(\Psi\) is a \(\beta\)-isomorphism, the initial velocity of the (continuous, semialgebraic) path 
\((\Psi\circ\Phi)\circ\gamma = \Psi\circ{\tilde\gamma}\)
is horizontal. By Proposition \ref{prop: beta-isomorphism, horizontal initial velocity}, \(\Psi\circ\Phi\) is a \(\beta\)-isomorphism of degree \(d\).
\end{proof}

\begin{proposition}
The germ of the identity map \(I\from (\R^2,0)\to (\R^2,0)\) is a \(\beta\)-isomorphism of degree \(d\), for each \(d\geq 1\).
\end{proposition}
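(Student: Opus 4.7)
The plan is to verify the two defining conditions of a $\beta$-isomorphism of degree $d$ directly against the identity map $I(x,y) = (x,y)$, plus the trivial observation that $I$ is semialgebraic and bi-Lipschitz.

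First, I would exhibit the required $\beta$-quasihomogeneous polynomials. The simplest choice is $F(X,Y) = G(X,Y) = X^d$, which is $\beta$-quasihomogeneous of degree $d$ (assigning weight $1$ to $X$ and weight $\beta$ to $Y$, the single monomial $X^d$ has weighted degree $d$), and it obviously satisfies $G \circ I = F$. This takes care of condition (i) in the definition of $\beta$-isomorphism.

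Next, condition (ii) is immediate: since $I_1(x,0) = x$, we have
\begin{equation*}
\lim_{x\to 0^+}\frac{I_1(x,0)}{x} = 1 \neq 0
\quad\text{and}\quad
\lim_{x\to 0^-}\frac{I_1(x,0)}{x} = 1 \neq 0.
\end{equation*}

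Finally, $I$ is clearly a germ of a semialgebraic bi-Lipschitz map (it is itself semialgebraic, and it is its own inverse with Lipschitz constant $1$). There is no real obstacle here; the statement is essentially a sanity check that the class of $\beta$-isomorphisms of degree $d$ is nonempty and contains the identity, which will be needed to give the relation ``$\beta$-isomorphic'' the structure of an equivalence relation in the sections that follow.
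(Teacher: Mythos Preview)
Your proof is correct and essentially follows the same outline as the paper's: both use $F(X,Y)=G(X,Y)=X^d$ for condition (i). The only minor difference is in handling condition (ii): you verify the limits $\lim_{x\to 0^\pm} I_1(x,0)/x = 1$ directly from the definition, whereas the paper instead invokes its characterization of $\beta$-isomorphisms via preservation of horizontal initial velocities (noting that $I\circ\gamma=\gamma$ trivially preserves horizontality). Your route is slightly more elementary since it avoids appealing to that equivalence, but both arguments are equally valid and equally short.
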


\begin{proof}
Fix any \(d\geq 1\). Clearly, there exist \(\beta\)-quasihomogeneous polynomials \(F(X,Y)\) and \/\(G(X,Y)\) of degree \(d\) such that 
\(G\circ I = F\): take any \(\beta\)-quasihomogeneous polynomial \(F(X,Y)\) of degree \(d\) (for example, \(F(X,Y) = X^d\)) and set \(G = F\). Also, for any continuous semialgebraic path \(\gamma\from [0,\epsilon)\to\R^2\) whose initial velocity is horizontal, the initial velocity of 
\(\tilde\gamma \coloneqq \Phi\circ\gamma\) is horizontal, because 
\(\tilde\gamma = \gamma\). By Proposition \ref{prop: beta-isomorphism, horizontal initial velocity}, \(I\) is \(\beta\)-isomorphism of degree \(d\).
\end{proof}

\begin{proposition}
Let \(\Phi\from (\R^2,0)\to(\R^2,0)\) be a germ of semialgebraic bi-Lipschitz map. If \(\Phi\) is a \(\beta\)-isomorphism of degree \(d\) then \(\Phi^{-1}\) is a 
\(\beta\)-isomorphism of degree \(d\).  
\end{proposition}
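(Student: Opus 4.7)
The plan is to apply Proposition \ref{prop: beta-isomorphism, horizontal initial velocity} to \(\Phi^{-1}\). Since inverses of germs of semialgebraic bi-Lipschitz maps are semialgebraic bi-Lipschitz, and since \(G\circ\Phi = F\) rearranges to \(F\circ\Phi^{-1} = G\) with \(F,G\) both \(\beta\)-quasihomogeneous of the same degree \(d\), the hypothesis of Proposition \ref{prop: beta-isomorphism, horizontal initial velocity} is satisfied for \(\Phi^{-1}\). It therefore suffices to verify condition \((ii)\) of that proposition, namely that \(\Phi^{-1}\) sends every continuous semialgebraic path with horizontal initial velocity to a path with horizontal initial velocity.

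The key step is to establish this for the two specific paths \(\alpha_+(t)=(t,0)\) and \(\alpha_-(t)=(-t,0)\). Set \(\tilde\alpha_\pm \coloneqq \Phi^{-1}\circ \alpha_\pm\); both paths have finite nonzero initial velocity by Remark \ref{rk: finite nonzero initial velocity} applied to \(\Phi^{-1}\). To prove that \(\tilde\alpha_+\) has horizontal initial velocity, observe that \(\Phi\circ\tilde\alpha_+ = \alpha_+\) has horizontal initial velocity \((1,0)\), while by Proposition \ref{prop: opposite horizontal initial velocities (beta-isomorphism)} the paths \(\Phi\circ\alpha_+\) and \(\Phi\circ\alpha_-\) have horizontal initial velocities in opposite directions; hence \((1,0)\) agrees in direction with exactly one of them, say \(\Phi\circ\alpha_\sigma\) for some \(\sigma\in\{+,-\}\). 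Applying Corollary \ref{cor: initial velocities with the same direction (bi-Lipschitz transformation)} to \(\Phi\) with the paths \(\tilde\alpha_+\) and \(\alpha_\sigma\), the same-direction property transfers from the images back to the paths themselves, so the initial velocity of \(\tilde\alpha_+\) has the same direction as the (horizontal) initial velocity of \(\alpha_\sigma\), and is therefore horizontal. The argument for \(\tilde\alpha_-\) is identical, starting from the fact that \(\Phi\circ\tilde\alpha_- = \alpha_-\) has initial velocity \((-1,0)\).

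Now let \(\gamma\from[0,\epsilon)\to\R^2\) be any continuous semialgebraic path with \(\gamma(0)=0\) and horizontal initial velocity. That initial velocity lies in the same direction as the initial velocity of either \(\alpha_+\) or \(\alpha_-\). Applying Corollary \ref{cor: initial velocities with the same direction (bi-Lipschitz transformation)} this time to \(\Phi^{-1}\), with \(\gamma\) and the corresponding \(\alpha_\pm\), the path \(\Phi^{-1}\circ\gamma\) has initial velocity in the same direction as \(\tilde\alpha_\pm\), which is horizontal by the previous step. Proposition \ref{prop: beta-isomorphism, horizontal initial velocity} then yields that \(\Phi^{-1}\) is a \(\beta\)-isomorphism of degree \(d\). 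The principal obstacle is the middle step: the forward implication of Proposition \ref{prop: beta-isomorphism, horizontal initial velocity} applied to \(\Phi\) does not directly propagate horizontality backwards through \(\Phi\), so one must instead match \(\tilde\alpha_+\) against whichever of \(\alpha_+,\alpha_-\) is mapped by \(\Phi\) in the direction of \((1,0)\), a case distinction that absorbs both the direct and reverse \(\beta\)-isomorphism possibilities.
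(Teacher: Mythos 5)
Your proof is correct and takes essentially the same approach as the paper: reduce to verifying the path criterion of Proposition \ref{prop: beta-isomorphism, horizontal initial velocity} for \(\Phi^{-1}\), using the bi-Lipschitz direction-preservation of Corollary \ref{cor: initial velocities with the same direction (bi-Lipschitz transformation)} together with the horizontality and opposite directions of \(\Phi\circ\alpha_\pm\) from Proposition \ref{prop: opposite horizontal initial velocities (beta-isomorphism)}. The only difference is that you insert the intermediate step of first establishing that \(\Phi^{-1}\circ\alpha_\pm\) have horizontal initial velocity (itself a special case of the general claim) before handling an arbitrary horizontal path, whereas the paper handles the arbitrary path \(\tilde\gamma\) in a single application of the corollary: since \(\tilde\gamma\) is horizontal it shares direction with one of \(\Phi\circ\alpha_\pm\), and the corollary then gives directly that \(\Phi^{-1}\circ\tilde\gamma\) shares direction with \(\alpha_\pm\), hence is horizontal.
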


\begin{proof}
Since \(\Phi\) is a \(\beta\)-isomorphism of degree \(d\), there exist 
\(\beta\)-quasihomogeneous polynomials \(F(X,Y), G(X,Y)\) of degree \(d\) such that \(G\circ \Phi = F\). Hence, there exist \(\beta\)-quasihomogeneous polynomials \(\tilde F(X,Y), \tilde G(X,Y)\) of degree \(d\) such that \(\tilde G\circ\Phi^{-1} = \tilde F\): for example, 
take \(\tilde G = F\) and \/\(\tilde F = G\). 
Now, we show that for any continuous semialgebraic path 
\(\tilde\gamma\from[0,\epsilon)\to \R^2\) whose initial velocity is horizontal,
the initial velocity of \(\gamma\coloneqq \Phi^{-1}\circ{\tilde\gamma}\) is
horizontal. Fix one such path \(\tilde\gamma\from[0,\epsilon)\to \R^2\).
Let \(\alpha_+, \alpha_-\from[0,\epsilon)\to\R^2\) be the (continuous semialgebraic) paths defined by \(\alpha_+(t) = (t,0)\) and 
\/\(\alpha_-(t) = (-t,0)\). Since \(\Phi\) is a \(\beta\)-isomorphism, the initial velocities of the paths \(\tilde\alpha_+\coloneqq \Phi\circ\alpha_+\) and
\/\(\tilde\alpha_-\coloneqq \Phi\circ\alpha_-\) are both horizontal and have opposite directions. So, the initial velocity of \(\tilde\gamma\) (which is horizontal) has either the same direction as the initial velocity of 
\(\tilde\alpha_+\) or the same direction as the initial velocity of 
\(\tilde\alpha_-\). Since \(\Phi\) is bi-Lipschitz, it follows that the initial velocity of \(\gamma = \Phi^{-1}\circ{\tilde\gamma}\) has either the same direction as the initial velocity of \(\alpha_+ = \Phi^{-1}\circ{\tilde\alpha_+}\) or the same direction as the initial velocity of \(\alpha_- = \Phi^{-1}\circ{\tilde\alpha_-}\). In any case, the initial velocity of \(\gamma\) is horizontal. By Proposition \ref{prop: beta-isomorphism, horizontal initial velocity}, \(\Phi^{-1}\) is \(\beta\)-isomorphism of degree \(d\).
\end{proof}

\begin{remark}
Every \(\beta\)-isomorphism is composable with its inverse.
\end{remark}

Now, we proceed to the definition of the \(\beta\)-transform.
Given a \(\beta\)-isomorphism 
\(\Phi = (\Phi_1,\Phi_2)\from (\R^2,0) \to (\R^2,0)\),
we define 
\(\lambda \coloneqq (\lambda_+,\lambda_-)\) 
and 
\(\phi \coloneqq (\phi_+,\phi_-)\),
where:
\begin{align*}
\lambda_+ \coloneqq \lim_{x\to 0^+}\frac{\Phi_1(x,0)}{x}\,,& \quad
\lambda_- \coloneqq \lim_{x\to 0^-}\frac{\Phi_1(x,0)}{x}\\[7pt]
\phi_+(t) \coloneqq \abs{\lambda_+}^{-\beta}\cdot
          \lim_{x\to 0^+}\frac{\Phi_2(x,t\abs{x}^\beta)}{\abs{x}^\beta}\,,&\quad
\phi_-(t) \coloneqq \abs{\lambda_-}^{-\beta}\cdot
          \lim_{x\to 0^-}\frac{\Phi_2(x,t\abs{x}^\beta)}{\abs{x}^\beta}          
\end{align*}
We define the {\it\(\beta\)-transform} of \(\Phi\) to be the ordered pair 
\((\lambda,\phi)\). 

From the definitions above, it follows that, for each \(t\in\R\):
\begin{align}
\label{eq: Phi1 asymptotic formula}
\Phi_1(x,t\abs{x}^\beta) &= 
          \lambda x + o(x) \quad \text{ as } x\to 0\\
\label{eq: Phi2 asymptotic formula}
\Phi_2(x,t\abs{x}^\beta) &= 
          \abs{\lambda}^\beta\phi(t)\abs{x}^\beta + o(\abs{x}^\beta) 
          \quad \text{ as } x\to 0          
\end{align}
where
\begin{equation*}
\begin{cases}
\lambda = \lambda_+ \,\text{ and } \,\phi = \phi_+, &\text{if } \,x > 0\\
\lambda = \lambda_- \,\text{ and } \,\phi = \phi_-, &\text{if } \,x < 0
\end{cases}
\ .
\end{equation*}

\begin{proposition}
Let \(\Phi = (\Phi_1,\Phi_2)\) and \(\Psi = (\Psi_1,\Psi_2)\) be 
\(\beta\)-isomorphisms of the same degree such that \(\Psi\) is composable with \(\Phi\), and let \(Z \coloneqq \Psi\circ\Phi\).
Let \((\lambda,\phi)\), \((\mu,\psi)\) be the 
\(\beta\)-transforms of \(\Phi\), \(\Psi\), respectively. We have:
\begin{enumerate}[label=\roman*.]
\item Asympotic formula for \(Z_1(x,0)\).
\begin{equation}
\label{eq: asymptotic formula for Z1}
Z_1(x,0) = \lambda\mu x + o(x) \,\text{ as } x\to 0,
\end{equation}
where 
\begin{equation*}
\begin{cases}
\lambda = \lambda_+,\, \mu = \mu_+, & \text{if } \,x > 0\\
\lambda = \lambda_-,\, \mu = \mu_-, & \text{if } \,x < 0\\
\end{cases}
\quad\text{ or }\quad
\begin{cases}
\lambda = \lambda_+,\, \mu = \mu_-, & \text{if } \,x > 0\\
\lambda = \lambda_-,\, \mu = \mu_+, & \text{if } \,x < 0\\
\end{cases}
\ ,
\end{equation*}
according as \(\Phi\) is direct or reverse, respectively.

\item Asympotic formula for \(Z_2(x,t\abs{x}^\beta)\), with \(t\)  fixed. 
\begin{equation}
\label{eq: asymptotic formula for Z2}
Z_2(x,t\abs{x}^\beta) = 
\abs{\lambda\mu}^\beta\psi(\phi(t))\abs{x}^\beta + o(\abs{x}^\beta) 
\,\text{ as } x\to 0,
\end{equation} 
where
\begin{equation*}
\begin{cases}
\lambda = \lambda_+,\, \mu = \mu_+,\, \phi = \phi_+,\, \psi = \psi_+, 
& \text{if } \,x > 0\\
\lambda = \lambda_-,\, \mu = \mu_-,\, \phi = \phi_-,\, \psi = \psi_-, 
& \text{if } \,x < 0\\
\end{cases}
\end{equation*}
\quad\text{ or }\quad
\begin{equation*}
\begin{cases}
\lambda = \lambda_+,\, \mu = \mu_-,\, \phi = \phi_+,\, \psi = \psi_-, 
& \text{if } \,x > 0\\
\lambda = \lambda_-,\, \mu = \mu_+,\, \phi = \phi_-,\, \psi = \psi_+, 
& \text{if } \,x < 0\\
\end{cases}
\ ,
\end{equation*}
according as \(\Phi\) is direct or reverse, respectively.\end{enumerate}
\end{proposition}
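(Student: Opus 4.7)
The plan is to apply the asymptotic formulas (\ref{eq: Phi1 asymptotic formula}) and (\ref{eq: Phi2 asymptotic formula}) twice---first to \(\Phi\), in order to control the argument fed into \(\Psi\), and then to \(\Psi\) at a suitable reference point---bridging the gap between the two with the Lipschitz continuity of \(\Psi\). Concretely, set \(u(x)\coloneqq \Phi_1(x, t\abs{x}^\beta)\) and \(v(x)\coloneqq \Phi_2(x, t\abs{x}^\beta)\), so that \(Z_j(x, t\abs{x}^\beta) = \Psi_j(u(x), v(x))\) for \(j = 1,2\). The formulas for \(\Phi\) give
\begin{equation*}
u(x) = \lambda x + o(x), \qquad v(x) = \abs{\lambda}^\beta\phi(t)\abs{x}^\beta + o(\abs{x}^\beta)
\end{equation*}
as \(x\to 0^\pm\), where \((\lambda,\phi) = (\lambda_+,\phi_+)\) for \(x > 0\) and \((\lambda,\phi) = (\lambda_-,\phi_-)\) for \(x < 0\).

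The asymptotic formulas for \(\Psi\) are stated only for arguments of the exact form \((u, s\abs{u}^\beta)\) with \(s\) fixed, whereas our \(v(x)\) is only asymptotically such an expression. To reconcile this, write \(v(x) = s(x)\abs{u(x)}^\beta\) with \(s(x)\coloneqq v(x)/\abs{u(x)}^\beta\); a short computation using the two displayed asymptotics together with \(\abs{u(x)}^\beta = \abs{\lambda}^\beta\abs{x}^\beta + o(\abs{x}^\beta)\) gives \(s(x)\to\phi(t)\) as \(x\to 0^\pm\). Since \(\Psi\) is Lipschitz, with some constant \(K\), we obtain
\begin{equation*}
\abs{\Psi_j(u(x), v(x)) - \Psi_j(u(x), \phi(t)\abs{u(x)}^\beta)} \leq K\abs{s(x) - \phi(t)}\cdot\abs{u(x)}^\beta = o(\abs{u(x)}^\beta)
\end{equation*}
for \(j = 1,2\); this error is therefore \(o(\abs{x}^\beta)\), and (since \(\beta > 1\)) also \(o(\abs{x})\).

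It remains to apply (\ref{eq: Phi1 asymptotic formula}) and (\ref{eq: Phi2 asymptotic formula}) to \(\Psi\) at the reference argument \((u(x), \phi(t)\abs{u(x)}^\beta)\). The branch of the \(\beta\)-transform of \(\Psi\) that appears is the one selected by the sign of \(u(x)\): since \(u(x) = \lambda x + o(x)\), when \(\Phi\) is direct the sign of \(u(x)\) matches that of \(x\), giving \((\mu,\psi) = (\mu_+,\psi_+)\) for \(x > 0\) and \((\mu_-,\psi_-)\) for \(x < 0\); when \(\Phi\) is reverse the signs flip, yielding exactly the crossed branch assignments in the statement. The formula for \(\Psi_1\) then gives \(\Psi_1(u,\phi(t)\abs{u}^\beta) = \mu u + o(u) = \lambda\mu x + o(x)\), which combined with the Lipschitz estimate proves (i); the formula for \(\Psi_2\) gives \(\Psi_2(u,\phi(t)\abs{u}^\beta) = \abs{\mu}^\beta\psi(\phi(t))\abs{u}^\beta + o(\abs{u}^\beta) = \abs{\lambda\mu}^\beta\psi(\phi(t))\abs{x}^\beta + o(\abs{x}^\beta)\), proving (ii). The main obstacle is not any one computation but the four-way sign bookkeeping (sign of \(x\) crossed with \(\Phi\) direct/reverse) that justifies the specific selection of \((\lambda,\mu,\phi,\psi)\) in each clause of the statement.
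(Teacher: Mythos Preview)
Your argument is correct and is essentially the paper's own proof: both apply the asymptotic expansions for \(\Phi\), use the Lipschitz continuity of \(\Psi\) to replace the resulting argument by a reference point of the exact form \((\tilde x,\, s\,|\tilde x|^\beta)\) with \(s=\phi(t)\) fixed, and then apply the expansions for \(\Psi\) there, with the sign of \(\tilde x\) (governed by whether \(\Phi\) is direct or reverse) selecting the branch \((\mu_\pm,\psi_\pm)\). The only cosmetic difference is that the paper leaves the Lipschitz step implicit and uses \(\tilde x=\Phi_1(x,t|x|^\beta)\) directly, whereas you make that step explicit via \(s(x)\to\phi(t)\); the substance is the same.
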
 

\begin{proof}
By applying formula (\ref{eq: Phi1 asymptotic formula}) 
successively to \(\Phi\) and \(\Psi\), we obtain:
\begin{align*}
Z_1(x,0) &= \Psi_1(\Phi(x,0))\\
    &= \Psi_1(\lambda x, \abs{\lambda}^\beta\phi(0)\abs{x}^\beta) + o(x)\\
    &= \lambda\mu x + o(x)
\end{align*}

Now, let \(t\in\R\) be fixed. By definition,
\begin{equation*}
Z_2(x,t\abs{x}^\beta) = \Psi_2(\tilde x, \Phi_2(x,t\abs{x}^\beta)),
\end{equation*}
where \(\tilde x \coloneqq \Phi_1(x,t\abs{x}^\beta)\).

Applying (\ref{eq: Phi2 asymptotic formula}) and 
(\ref{eq: Phi1 asymptotic formula}) successively, we obtain:
\begin{align*}
\Phi_2(x,t\abs{x}^\beta) &= 
          \abs{\lambda}^\beta\phi(t)\abs{x}^\beta + o(\abs{x}^\beta)\\
          &= \phi(t)\abs{\tilde x}^\beta + o(\abs{x}^\beta)
\end{align*}

Thus,
\begin{align*}
\Psi_2(\tilde x, \Phi_2(x,t\abs{x}^\beta)) &= 
     \Psi_2(\tilde x, \phi(t)\abs{\tilde x}^\beta) + o(\abs{x}^\beta)\\
     &= \abs{\mu}^\beta \psi(\phi(t))\abs{\tilde x}^\beta + o(\abs{x}^\beta)\\
     &= \abs{\lambda\mu}^\beta \psi(\phi(t))\abs{x}^\beta + o(\abs{x}^\beta)\,.
\end{align*}
\end{proof}

\begin{corollary}
Let \(\Phi = (\Phi_1,\Phi_2)\) and \(\Psi = (\Psi_1,\Psi_2)\) be 
\(\beta\)-isomorphisms, and let \(Z \coloneqq \Psi\circ\Phi\).
Let \((\lambda,\phi)\), \((\mu,\psi)\), and \((\nu,\zeta)\) be the 
\(\beta\)-transforms of \(\Phi\), \(\Psi\), and \(Z\), respectively. We have:
\begin{equation*}
(\nu,\zeta) = 
\begin{cases}
\left(
(\lambda_+\mu_+,\lambda_-\mu_-), (\psi_+\circ\phi_+,\psi_-\circ\phi_-)
\right), 
&\text{if \(\Phi\) is direct}\\
\left(
(\lambda_+\mu_-,\lambda_-\mu_+), (\psi_-\circ\phi_+,\psi_+\circ\phi_-)
\right), 
&\text{if \(\Phi\) is reverse}
\end{cases}
\end{equation*}
\end{corollary}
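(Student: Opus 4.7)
The plan is to derive both coordinates of $(\nu,\zeta)$ by applying the defining formulas of the $\beta$-transform to $Z$, and then to substitute the two asymptotic expansions (\ref{eq: asymptotic formula for Z1}) and (\ref{eq: asymptotic formula for Z2}) provided by the preceding proposition. The whole argument is bookkeeping; the only real care needed is to keep track of the sign case split dictated by whether $\Phi$ is direct or reverse.

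First I compute $\nu_+$ and $\nu_-$. By definition $\nu_\pm = \lim_{x\to 0^\pm} Z_1(x,0)/x$, and formula (\ref{eq: asymptotic formula for Z1}) says $Z_1(x,0) = \lambda\mu\, x + o(x)$ with the pair $(\lambda,\mu)$ read off from the appropriate table. If $\Phi$ is direct, then for $x > 0$ we have $(\lambda,\mu) = (\lambda_+,\mu_+)$ and for $x < 0$ we have $(\lambda,\mu) = (\lambda_-,\mu_-)$, hence $\nu = (\lambda_+\mu_+, \lambda_-\mu_-)$; if $\Phi$ is reverse, the same reading gives $\nu = (\lambda_+\mu_-, \lambda_-\mu_+)$.

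Next I compute $\zeta_\pm(t)$. By definition, $\zeta_\pm(t) = |\nu_\pm|^{-\beta} \lim_{x\to 0^\pm} Z_2(x,t|x|^\beta)/|x|^\beta$. Formula (\ref{eq: asymptotic formula for Z2}) gives $Z_2(x,t|x|^\beta) = |\lambda\mu|^\beta\,\psi(\phi(t))\,|x|^\beta + o(|x|^\beta)$, so the limit equals $|\lambda\mu|^\beta\,\psi(\phi(t))$. Since, by the previous step, the number $\lambda\mu$ appearing here is precisely $\nu_\pm$, the prefactor $|\nu_\pm|^{-\beta}$ cancels $|\lambda\mu|^\beta$ exactly, leaving $\zeta_\pm(t) = \psi(\phi(t))$ with $\phi,\psi$ read from the proposition's table. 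In the direct case this yields $\zeta = (\psi_+\circ\phi_+,\psi_-\circ\phi_-)$; in the reverse case it yields $\zeta = (\psi_-\circ\phi_+,\psi_+\circ\phi_-)$, matching the statement.

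There is no real obstacle; the statement is essentially a corollary in the strict sense, obtained by unwinding definitions. The only thing worth writing carefully is the sign bookkeeping in the reverse case, where the association of $\phi_+$ with $\psi_-$ (and of $\phi_-$ with $\psi_+$) comes from the fact that the $x > 0$ branch of $\Phi$ maps into the $x < 0$ branch of $\Psi$'s domain, so the $\beta$-transform data of $\Psi$ must be evaluated on the branch opposite to the one where the $\beta$-transform data of $\Phi$ was taken. This is already encoded in the table of the preceding proposition, so nothing further needs to be checked.
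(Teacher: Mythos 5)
Your proof is correct and follows essentially the same route as the paper's: read off $\nu_\pm$ from (\ref{eq: asymptotic formula for Z1}) by dividing by $x$ and taking one-sided limits, then observe that in (\ref{eq: asymptotic formula for Z2}) the factor $\abs{\lambda\mu}^\beta$ equals $\abs{\nu_\pm}^\beta$ so it cancels against the $\abs{\nu_\pm}^{-\beta}$ in the definition of $\zeta_\pm$, leaving $\zeta_\pm = \psi\circ\phi$ with the branches read from the preceding proposition's tables. The paper does exactly this (it rewrites (\ref{eq: asymptotic formula for Z2}) with $\abs{\nu}^\beta$ in place of $\abs{\lambda\mu}^\beta$ before dividing by $\abs{x}^\beta$), so there is no substantive difference in method.
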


\begin{proof}
Dividing both sides of (\ref{eq: asymptotic formula for Z1}) by \(x\) and then successively letting \(x\to 0^+\) and \(x\to 0^-\), we obtain:
\begin{equation*}
\begin{cases}
\nu_+ = \lambda_+\mu_+ \,\text{ and }\, \nu_- = \lambda_-\mu_-, 
&\text{if \(\Phi\) is direct}\\
\nu_+ = \lambda_+\mu_- \,\text{ and }\, \nu_- = \lambda_-\mu_+, 
&\text{if \(\Phi\) is reverse}
\end{cases}
\end{equation*}

Thus, equation (\ref{eq: asymptotic formula for Z2}) can be rewritten as 
\begin{equation}
\label{eq: Asymptotic formula for Z2 rewritten}
Z_2(x,t\abs{x}^\beta) = 
\abs{\nu}^\beta\psi(\phi(t))\abs{x}^\beta + o(\abs{x}^\beta) 
\,\text{ as } x\to 0,
\end{equation} 
where
\begin{equation*}
\begin{cases}
\nu = \nu_+,\, \phi = \phi_+,\, \psi = \psi_+, 
& \text{if } \,x > 0\\
\nu = \nu_-,\, \phi = \phi_-,\, \psi = \psi_-, 
& \text{if } \,x < 0\\
\end{cases}
\quad\text{ or }\quad
\begin{cases}
\nu = \nu_+,\, \phi = \phi_+,\, \psi = \psi_-, 
& \text{if } \,x > 0\\
\nu = \nu_-,\, \phi = \phi_-,\, \psi = \psi_+, 
& \text{if } \,x < 0\\
\end{cases}
\ ,
\end{equation*}
according as \(\Phi\) is direct or reverse, respectively.

Dividing both sides of (\ref{eq: Asymptotic formula for Z2 rewritten}) by 
\(\abs{x}^\beta\) and then successively letting \(x\to 0^+\) and \(x\to 0^-\), 
we obtain:
\begin{equation*}
\begin{cases}
\zeta_+ = \psi_+\circ\phi_+ \,\text{ and }\, \zeta_- = \psi_-\circ\phi_-,
&\text{if \(\Phi\) is direct}\\
\zeta_+ = \psi_-\circ\phi_+ \,\text{ and }\, \zeta_- = \psi_+\circ\phi_-,
&\text{if \(\Phi\) is reverse}
\end{cases}
\end{equation*} 
\end{proof}

\begin{corollary}
\label{cor: beta-transform of the inverse of a beta-isomorphism}
Let \(\Phi\from(\R^2,0)\to(\R^2,0)\) be a \(\beta\)-isomorphism, 
and let \((\lambda,\phi)\) be the \(\beta\)-transform of \(\Phi\).
The \(\beta\)-transform of \(\Phi^{-1}\) is given by:
\begin{equation*}
\begin{cases}
\left((\lambda_+^{-1},\lambda_-^{-1}),(\phi_+^{-1},\phi_-^{-1})\right),
&\text{if\/ \(\Phi\) is direct}\\
\left((\lambda_-^{-1},\lambda_+^{-1}),(\phi_-^{-1},\phi_+^{-1})\right),
&\text{if\/ \(\Phi\) is reverse}
\end{cases}
\end{equation*} 
\end{corollary}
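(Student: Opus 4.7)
The plan is to apply the previous corollary (the composition formula for $\beta$-transforms) directly, taking $\Psi \coloneqq \Phi^{-1}$ so that $Z \coloneqq \Psi \circ \Phi = I$. By the remark immediately preceding the definition of the $\beta$-transform, every $\beta$-isomorphism is composable with its inverse, so the hypotheses of the composition corollary are satisfied.

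First, I would compute the $\beta$-transform of the identity map $I$ directly from the definition. Since $I_1(x,0) = x$, both one-sided limits $\lim_{x\to 0^\pm} I_1(x,0)/x$ equal $1$, so $\lambda_\pm(I) = 1$. Since $I_2(x,t\abs{x}^\beta) = t\abs{x}^\beta$, we get $\phi_\pm(I)(t) = \abs{1}^{-\beta}\cdot t = t$. Thus the $\beta$-transform of $I$ is $((1,1),(\id,\id))$.

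Now let $(\mu,\psi)$ denote the $\beta$-transform of $\Phi^{-1}$ (this is what we want to determine), and let $(\nu,\zeta)$ be the $\beta$-transform of $Z = \Phi^{-1}\circ\Phi = I$, so $(\nu,\zeta) = ((1,1),(\id,\id))$ by the computation above. I would then substitute this into the two cases of the composition corollary and solve the resulting elementary equations. In the direct case, the formulas yield $\lambda_+\mu_+ = 1$, $\lambda_-\mu_- = 1$, $\psi_+\circ\phi_+ = \id$, and $\psi_-\circ\phi_- = \id$, from which $\mu_\pm = \lambda_\pm^{-1}$ and $\psi_\pm = \phi_\pm^{-1}$. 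In the reverse case, the formulas give $\lambda_+\mu_- = 1$, $\lambda_-\mu_+ = 1$, $\psi_-\circ\phi_+ = \id$, $\psi_+\circ\phi_- = \id$, yielding $\mu_+ = \lambda_-^{-1}$, $\mu_- = \lambda_+^{-1}$, $\psi_+ = \phi_-^{-1}$, $\psi_- = \phi_+^{-1}$. These are exactly the formulas claimed.

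There is essentially no genuine obstacle here; the result is a mechanical deduction from the previous corollary together with the trivial computation of the $\beta$-transform of $I$. The only thing to be mildly careful about is not confusing oneself in the reverse case, where the swap of plus/minus subscripts from the composition formula has to be tracked correctly when solving for $(\mu,\psi)$; in particular, the convention of the composition formula is indexed by whether $\Phi$ is direct or reverse (not by whether $\Phi^{-1}$ is), which is exactly what we need since we are given $\Phi$'s type and solving for $\Phi^{-1}$'s $\beta$-transform.
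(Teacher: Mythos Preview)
Your proposal is correct and is exactly the intended argument: the paper states this result as an immediate corollary of the composition formula, and your proof spells out that deduction by computing the $\beta$-transform of the identity and solving the resulting equations in each case. There is nothing to add.
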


\begin{proposition}
Let \(\Phi\from(\R^2,0)\to(\R^2,0)\) be a \(\beta\)-isomorphism, and let 
\((\lambda,\phi)\) be the \(\beta\)-transform of\/ \(\Phi\). We have:
\begin{enumerate}[label=\textnormal{(\alph*)}]
\item \(\lambda_+\) and \/ \(\lambda_-\) are nonzero real numbers and they have the same sign.
\item \(\phi_+\) and \/ \(\phi_-\) are both bi-Lipschitz functions.
\end{enumerate}
\end{proposition}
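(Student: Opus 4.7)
Part (a) is essentially immediate from the structural results already established. By the definition of $\beta$-isomorphism, both $\lambda_+$ and $\lambda_-$ are nonzero real numbers. The classification of $\beta$-isomorphisms into direct and reverse types, stated in the discussion following Proposition \ref{prop: opposite horizontal initial velocities (beta-isomorphism)}, says that $\lambda_+$ and $\lambda_-$ are either both positive (direct case) or both negative (reverse case); in either case they have the same sign.

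For part (b), the plan is to prove that $\phi_+$ and $\phi_-$ are Lipschitz by a direct application of the Lipschitz property of $\Phi$, and then to obtain the bi-Lipschitz conclusion by applying the same argument to $\Phi^{-1}$. Let $K > 0$ be a Lipschitz constant for $\Phi$ near the origin, and fix $t_1, t_2 \in \R$. For $x > 0$ sufficiently small, the two points $(x, t_1 x^\beta)$ and $(x, t_2 x^\beta)$ lie at distance exactly $|t_1 - t_2|\, x^\beta$, so
\[
|\Phi_2(x, t_1 x^\beta) - \Phi_2(x, t_2 x^\beta)| \le |\Phi(x, t_1 x^\beta) - \Phi(x, t_2 x^\beta)| \le K\, |t_1 - t_2|\, x^\beta.
\]
Dividing by $x^\beta$ and letting $x \to 0^+$ (both sides have limits: the left one by definition of $\phi_+$) gives $|\lambda_+|^\beta\,|\phi_+(t_1) - \phi_+(t_2)| \le K\,|t_1 - t_2|$, so $\phi_+$ is Lipschitz with constant $K/|\lambda_+|^\beta$. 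The analogous argument for $x \to 0^-$ yields the Lipschitz property of $\phi_-$.

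To obtain the bi-Lipschitz property, I will exploit that $\Phi^{-1}$ is itself a $\beta$-isomorphism (by the previously proved proposition on inverses), so the argument in the previous paragraph applied to $\Phi^{-1}$ in place of $\Phi$ shows that the two $\phi$-components of the $\beta$-transform of $\Phi^{-1}$ are Lipschitz. By Corollary \ref{cor: beta-transform of the inverse of a beta-isomorphism}, these components are precisely $\phi_+^{-1}$ and $\phi_-^{-1}$, possibly swapped, depending on whether $\Phi$ is direct or reverse. Hence $\phi_+^{-1}$ and $\phi_-^{-1}$ both exist and are Lipschitz, which means $\phi_+$ and $\phi_-$ are bi-Lipschitz. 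I do not foresee a substantive obstacle: the Lipschitz bound is a one-line application of the defining inequality for $\Phi$ to the rescaling that appears in $\phi_\pm$, and the inverse direction follows by symmetry through a corollary that already packages the invertibility of $\phi_\pm$ into the identification of the inverse's $\beta$-transform.
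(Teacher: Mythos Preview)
Your proof is correct and follows essentially the same approach as the paper: for (a) you invoke the direct/reverse dichotomy established after Proposition \ref{prop: opposite horizontal initial velocities (beta-isomorphism)}, and for (b) you bound $|\Phi_2(x,t_1|x|^\beta)-\Phi_2(x,t_2|x|^\beta)|$ by the Lipschitz constant of $\Phi$, divide by $|x|^\beta$, pass to the limit, and then appeal to Corollary \ref{cor: beta-transform of the inverse of a beta-isomorphism} to handle the inverses. The paper's version phrases the same estimate via the asymptotic formula (\ref{eq: Phi2 asymptotic formula}) with an $o(|x|^\beta)$ remainder, but the content is identical.
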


\begin{proof}
By Remark \ref{rk: finite nonzero initial velocity}, the paths 
\(\Phi(x,0)\), \(0\leq x <\epsilon\), and \(\Phi(-x,0)\), \(0\leq x<\epsilon\), both have finite nonzero initial velocities, so 
\(\lambda_+ = \lim_{x\to 0^+}\Phi_1(x,0)/x\) and
\(\lambda_- = \lim_{x\to 0^-}\Phi_1(x,0)/x\)
are both nonzero real numbers. As pointed out just after the proof of Proposition \ref{prop: opposite horizontal initial velocities (beta-isomorphism)}, \(\lambda_+\) and \(\lambda_-\) have the same sign.

By (\ref{eq: Phi2 asymptotic formula}), 
for any fixed \(t\) and \(t^\prime\), we have:
\begin{equation*}
\phi_+(t) - \phi_+(t^\prime) =
 \abs{\lambda_+}^{-\beta}\cdot
 \frac{\Phi_2(x,t\abs{x}^\beta) - \Phi_2(x,t^\prime\abs{x}^\beta)}{\abs{x}^\beta}
+ \frac{o(\abs{x}^\beta)}{\abs{x}^\beta}, \quad\text{as } \,x\to 0^+
\end{equation*}
and
\begin{equation*}
\phi_-(t) - \phi_-(t^\prime) =
 \abs{\lambda_-}^{-\beta}\cdot
 \frac{\Phi_2(x,t\abs{x}^\beta) - \Phi_2(x,t^\prime\abs{x}^\beta)}{\abs{x}^\beta}
+ \frac{o(\abs{x}^\beta)}{\abs{x}^\beta}, \quad\text{as } \,x\to 0^-.
\end{equation*}

On the other hand, since \(\Phi_2\) is Lipschitz, there exists \(K > 0\) 
(independent of \(x,t,t^\prime\)) such that
\begin{equation*}
\abs{\Phi_2(x,t\abs{x}^\beta) - \Phi_2(x,t^\prime\abs{x}^\beta)} \leq
     K\abs{t - t^\prime}\abs{x}^\beta.
\end{equation*}

Thus,
\begin{equation*}
\abs{\phi_+(t) - \phi_+(t^\prime)} \leq 
\abs{\lambda_+}^{-\beta}K\abs{t - t^\prime} 
     + \frac{o(\abs{x}^\beta)}{\abs{x}^\beta} \quad\text{as } \,x\to 0^+
\end{equation*}
and
\begin{equation*}
\abs{\phi_-(t) - \phi_-(t^\prime)} \leq 
\abs{\lambda_-}^{-\beta}K\abs{t - t^\prime} 
     + \frac{o(\abs{x}^\beta)}{\abs{x}^\beta} \quad\text{as } \,x\to 0^-.
\end{equation*}

By successively letting \(x\to 0^+\) and \(x\to 0^-\), we obtain:
\begin{equation*}
\abs{\phi_+(t) - \phi_+(t^\prime)} \leq 
     \abs{\lambda_+}^{-\beta}K\abs{t - t^\prime} 
\quad\text{ and }\quad
\abs{\phi_-(t) - \phi_-(t^\prime)} \leq 
     \abs{\lambda_-}^{-\beta}K\abs{t - t^\prime} \,.
\end{equation*}
Therefore, \(\phi_+\) and \(\phi_-\) are both Lipschitz functions.

Up to this point, our argument shows that, for any \(\beta\)-isomorphism 
\(\Phi\), the functions \(\phi_+\) and \(\phi_-\) are both Lipschitz. By Corollary \ref{cor: beta-transform of the inverse of a beta-isomorphism}, this implies that \(\phi_+^{-1}\) and \(\phi_-^{-1}\) are both Lipschitz too.
\end{proof}

\section{\(\cal R\)-semialgebraic Lipschitz equivalence and height functions}
\label{section: R-semialgebraic Lipschitz equivalence and height functions}

Two real \(\beta\)-quasihomogeneous polynomials
\(F(X,Y)\) and \(G(X,Y)\) of degree \(d\geq 1\) are said to be
{\it \({\cal R}\)-semialgebraically Lipschitz equivalent} 
if there exists a germ of semialgebraic bi-Lipschitz homeomorphism 
\(\Phi\from(\R^2,0)\to(\R^2,0)\) such that \(G\circ\Phi = F\).

With each \(\beta\)-quasihomogeneous polynomial \(F(X,Y)\), we associate two polynomial functions: the {\it right height function} 
\(f_+\from\R\to\R\), given by \(f_+(t) \coloneqq F(1,t)\), 
and the the {\it left height function} 
\(f_-\from\R\to\R\), given by \(f_-(t) \coloneqq F(-1,t)\).

In this section, we show that if any two \(\beta\)-quasihomogeneous polynomials of a certain special type are \({\cal R}\)-semialgebraically Lipschitz equivalent, then their height functions can be arranged in pairs of Lipschitz equivalent functions (Corollary \ref{cor: equivalent polynomials, Lipschitz equivalent height functions}).

\begin{proposition}
\label{prop: action of the beta-transform of a beta-isomorphism}
Let \(F(X,Y)\) and \(G(X,Y)\) be \(\beta\)-quasihomogeneous polynomials of degree \(d\geq 1\). Suppose that \(G\circ\Phi = F\), for a certain 
\(\beta\)-isomorphism \(\Phi\from(\R^2,0)\to(\R^2,0)\). 
Let \(f_+,f_-\) be the height functions of \(F\), \(g_+,g_-\) the height functions of \(G\), and let \((\lambda,\phi)\) be the \(\beta\)-transform of \(\Phi\).
We have:
\begin{equation*}
\begin{cases}
g_+\circ \phi_+ = \abs{\lambda_+}^{-d} f_+ 
\quad\text{and\/}\quad
g_-\circ \phi_- = \abs{\lambda_-}^{-d} f_-,
&\text{if\/ \(\Phi\) is direct}\\[2pt]
g_-\circ \phi_+ = \abs{\lambda_+}^{-d} f_+
\quad\text{and\/}\quad
g_+\circ \phi_- = \abs{\lambda_-}^{-d} f_-, 
&\text{if\/ \(\Phi\) is reverse}
\end{cases}
\end{equation*}
\end{proposition}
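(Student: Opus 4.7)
The plan is to substitute the path $(x, t|x|^\beta)$ into the identity $G\circ \Phi = F$ and extract the leading-order behavior as $x\to 0^\pm$. By $\beta$-quasihomogeneity of $F$ with scaling factor $|x|>0$, the right-hand side collapses to $F(x,t|x|^\beta) = |x|^d f_+(t)$ for $x>0$ and $|x|^d f_-(t)$ for $x<0$, so all the work is in evaluating the left-hand side to order $|x|^d$.

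To expand the left-hand side, I would write $G(X,Y)=\sum_k c_k X^{d-rk}Y^{sk}$ (with $\beta=r/s$) and plug in the asymptotic formulas (\ref{eq: Phi1 asymptotic formula}) and (\ref{eq: Phi2 asymptotic formula}). Since every monomial has weighted degree $d$, a routine binomial expansion of each factor $(\lambda x + o(x))^{d-rk}\cdot(|\lambda|^\beta\phi(t)|x|^\beta + o(|x|^\beta))^{sk}$ gives a leading contribution of order $|x|^d$ and an error of order $o(|x|^d)$. The factor $x^{d-rk}|x|^{rk}$ equals $\mathrm{sign}(x)^{d-rk}|x|^d$, and collecting this sign with $\lambda^{d-rk}$ recombines the sum into $G\bigl(\lambda\,\mathrm{sign}(x),\,|\lambda|^\beta\phi(t)\bigr)\cdot|x|^d$ plus $o(|x|^d)$, where $(\lambda,\phi) = (\lambda_\pm,\phi_\pm)$ according to the sign of $x$.

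Dividing $G\circ\Phi = F$ by $|x|^d$ and passing to the one-sided limits then reduces everything to the two scalar identities
\[
G(\lambda_+, |\lambda_+|^\beta \phi_+(t)) = f_+(t) \qquad \text{and} \qquad G(-\lambda_-, |\lambda_-|^\beta \phi_-(t)) = f_-(t).
\]
A final application of $\beta$-quasihomogeneity of $G$ with scaling factor $|\lambda_\pm|>0$ converts each left-hand side into $|\lambda_\pm|^d \cdot g_?(\phi_\pm(t))$, where the subscript on $g$ is dictated by the sign of the first argument. The main obstacle is this sign bookkeeping: for $x>0$ the decisive sign is that of $\lambda_+$ (yielding $g_+$ in the direct case, $g_-$ in the reverse case), while for $x<0$ it is that of $-\lambda_-$ (yielding $g_-$ in the direct case, $g_+$ in the reverse case). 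Reading off the four combinations produces exactly the announced identities.
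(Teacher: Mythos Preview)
Your proof is correct and follows essentially the same route as the paper: substitute the path $(x,t|x|^\beta)$, use $\beta$-quasihomogeneity of $F$ on the right, pass to the one-sided limits to obtain the scalar identities $G(\lambda_+,|\lambda_+|^\beta\phi_+(t))=f_+(t)$ and $G(-\lambda_-,|\lambda_-|^\beta\phi_-(t))=f_-(t)$, and finish by rescaling via the $\beta$-quasihomogeneity of $G$. The only difference is that the paper avoids your monomial-by-monomial expansion by using the exact identity $|x|^{-d}G(\Phi_1,\Phi_2)=G(\Phi_1/|x|,\Phi_2/|x|^\beta)$ and then invoking continuity of $G$ as $x\to 0^\pm$, which reaches the same scalar identities in one step.
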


\begin{proof}
By hypothesis, \(G(\Phi(x,y)) = F(x,y)\). 
Thus, for all \(t\in\R\) and \(x\neq 0\) sufficiently small, 
\begin{equation*}
G(\Phi(x,t\abs{x}^\beta)) = F(x,t\abs{x}^\beta)\,.
\end{equation*}
Since \(F\) is \(\beta\)-quasihomogeneous, this implies that
\begin{equation*}
G(\Phi(x,t\abs{x}^\beta)) = \abs{x}^d f(t),
\end{equation*}
where
\begin{equation*}
f = 
\begin{cases}
f_+, &\text{ if }x > 0\\
f_-, &\text{ if }x < 0
\end{cases}
\ .
\end{equation*}
Multiplying both sides of this equation by \(\abs{x}^{-d}\), we obtain:
\begin{equation*}
G\left(\frac{\Phi_1(x,t\abs{x}^\beta)}{\abs{x}}, 
    \frac{\Phi_2(x,t\abs{x}^\beta)}{\abs{x}^\beta}\right) = f(t)
\end{equation*}

Letting \(x\to 0^+\), it follows that
\begin{equation*}
G\left(\lambda_+, 
    \lim_{x\to 0^+}\frac{\Phi_2(x,t \abs{x}^\beta)}{\abs{x}^\beta}\right) = f_+(t)\,.
\end{equation*}

Hence,
\begin{equation}
\label{eq: g phi+}
g(\phi_+(t)) = \abs{\lambda_+}^{-d}f_+(t),
\end{equation}
where
\begin{equation*}
g = 
\begin{cases}
g_+, & \text{ if }\lambda_+ > 0\\
g_-, & \text{ if }\lambda_+ < 0
\end{cases}
\ .
\end{equation*}

Similarly, letting \(x\to 0^-\), it follows that
\begin{equation*}
G\left(-\lambda_-, 
    \lim_{x\to 0^-}\frac{\Phi_2(x,t \abs{x}^\beta)}{\abs{x}^\beta}\right) = f_-(t)\,.
\end{equation*}

Hence,
\begin{equation}
\label{eq: g phi-}
g(\phi_-(t)) = \abs{\lambda_-}^{-d}f_-(t),
\end{equation}
where
\begin{equation*}
g = 
\begin{cases}
g_-, & \text{ if }\lambda_- > 0\\
g_+, & \text{ if }\lambda_- < 0
\end{cases}
\ .
\end{equation*}

Clearly, equations (\ref{eq: g phi+}) and (\ref{eq: g phi-}) yield the result.
\end{proof}

\begin{theorem}
Let \(F(X,Y)\) and \(G(X,Y)\) be \(\beta\)-quasihomogeneous polynomials of degree \(d\geq 1\). Suppose that \(F\) and \(G\) are 
\({\cal R}\)-semialgebraically Lipschitz equivalent, and let  
\(\Phi\from(\R^2,0)\to(\R^2,0)\) be a germ of semialgebraic bi-Lipschitz homeomorphism such that \(G\circ\Phi = F\).

If \/
\(
V(F) \cap \{x > 0\} \neq \,\emptyset
\,\text{ and }
\,V(F) \cap \{x < 0\} \neq \,\emptyset,
\)
then \(\Phi\) is a \(\beta\)-isomorphism. Moreover, we have:
\begin{equation*}
\begin{cases}
g_+\circ \phi_+ = \abs{\lambda_+}^{-d} f_+ 
\quad\text{and\/}\quad
g_-\circ \phi_- = \abs{\lambda_-}^{-d} f_-,
&\text{if\/ \(\Phi\) is direct}\\[2pt]
g_+\circ \phi_- = \abs{\lambda_-}^{-d} f_- 
\quad\text{and\/}\quad
g_-\circ \phi_+ = \abs{\lambda_+}^{-d} f_+,
&\text{if\/ \(\Phi\) is reverse}
\end{cases},
\end{equation*}
where \(f_+,f_-\) are the height functions of \(F\), \(g_+,g_-\) are the height functions of \(G\), and \((\lambda,\phi)\) is the \(\beta\)-transform of \(\Phi\).
\end{theorem}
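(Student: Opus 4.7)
The plan is to split the argument into two stages: first verify that the given $\Phi$ satisfies the hypothesis of Proposition \ref{prop: order of growth of the second coordinate of a beta-isomorphism}(ii) and is therefore a $\beta$-isomorphism; second, invoke Proposition \ref{prop: action of the beta-transform of a beta-isomorphism} directly to read off the height-function equations, the dichotomy direct/reverse appearing automatically.

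For the first stage, I would use the hypothesis on $V(F)$ to produce $t_0, t_1 \in \R$ with $f_+(t_0) = 0$ and $f_-(t_1) = 0$: any point $(x_0, y_0) \in V(F)$ with $x_0 > 0$ gives $t_0 \coloneqq y_0/x_0^\beta$, and symmetrically for $t_1$ from a zero on the left half-plane. By $\beta$-quasihomogeneity, $F$ then vanishes identically along the arcs $\gamma_+(x) \coloneqq (x, t_0 x^\beta)$, $x \geq 0$, and $\gamma_-(x) \coloneqq (x, t_1|x|^\beta)$, $x \leq 0$, so $G\circ\Phi = F$ forces the transported paths $\Phi\circ\gamma_\pm$ to lie in $V(G)$. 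Writing $G = \sum c_k X^{d-rk}Y^{sk}$, I would observe that $V(G)\cap\{X\neq 0\}$ is a finite disjoint union of $\beta$-arcs $\{Y = s|X|^\beta\}$ whose slopes are the real roots of $g_+$ (for $X > 0$) and $g_-$ (for $X < 0$). Since $\Phi\circ\gamma_+$ is a continuous semialgebraic arc through the origin, it must be confined to a single branch for small $x > 0$; on such a branch $\Phi_2(x,t_0 x^\beta) = s\cdot|\Phi_1(x,t_0 x^\beta)|^\beta$, and the Lipschitz bound $|\Phi_1(x,t_0 x^\beta)| \leq K|x|$ immediately gives $\Phi_2(x,t_0 x^\beta) = O(|x|^\beta)$ as $x\to 0^+$. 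The identical argument on $\{x < 0\}$ yields $\Phi_2(x, t_1|x|^\beta) = O(|x|^\beta)$ as $x\to 0^-$, and the two estimates together place us in the hypothesis of Proposition \ref{prop: order of growth of the second coordinate of a beta-isomorphism}(ii), giving that $\Phi$ is a $\beta$-isomorphism. The height-function identities then follow verbatim from Proposition \ref{prop: action of the beta-transform of a beta-isomorphism}.

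The delicate point, which I expect to be the main obstacle, is the branch-confinement step. The zero set $V(G)$ can contain the vertical axis $\{X = 0\}$ as an extra component, precisely when $G(0,Y)$ vanishes identically. If $\Phi\circ\gamma_+$ slid into that vertical branch, $\Phi_1(x,t_0 x^\beta)$ would vanish for small $x > 0$ and the estimate above would collapse to the merely linear Lipschitz bound $\Phi_2(x,t_0 x^\beta) = O(|x|)$, which is insufficient. To rule this out I would try to play $t_0$ and $t_1$ against each other, exploiting the nonzero finite initial velocities from Remark \ref{rk: finite nonzero initial velocity} together with the fact that, because $\Phi$ is bi-Lipschitz, the images of the positive and negative halves of the $x$-axis must have distinct (in fact opposite) tangent directions; combined with the explicit branch structure of $V(G)$ in both half-planes, this should force at least one of $\Phi\circ\gamma_\pm$ to live on a genuine $\beta$-arc, from which the $O(|x|^\beta)$ estimate on the other side can then be recovered by a symmetric argument.
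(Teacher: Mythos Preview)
Your overall strategy---produce $t_0,t_1$ from the zero-set hypothesis, verify the growth condition of Proposition~\ref{prop: order of growth of the second coordinate of a beta-isomorphism}(ii), then read off the height-function identities from Proposition~\ref{prop: action of the beta-transform of a beta-isomorphism}---is exactly the paper's. The divergence is in how the estimate $\Phi_2(x,t_0x^\beta)=O(x^\beta)$ is obtained. You argue geometrically, by confining $\Phi\circ\gamma_+$ to a single branch of $V(G)$. The paper instead recycles the algebraic Cauchy root bound~(\ref{eq: Cauchy's bound for Phi_2}) from the proof of Proposition~\ref{prop: order of growth of the second coordinate of a beta-isomorphism}: when $f_+(t_0)=0$ the last entry in that maximum collapses to $\abs{c_0/c_m}\abs{\tilde x}^{rm}$, so every term is a \emph{nonnegative} power of $\abs{\tilde x}=\abs{\Phi_1(x,t_0x^\beta)}/\abs{x}$, and the mere boundedness of $\tilde x$ (supplied by Remark~\ref{rk: finite nonzero initial velocity} together with~(\ref{eq: equality of first components of initial velocities})) already gives the $O(\abs{x}^\beta)$ bound.

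This algebraic device sidesteps precisely the vertical-axis obstruction you flag: no positive lower bound on $\abs{\tilde x}$ is needed, so whether or not $\Phi\circ\gamma_+$ drifts toward $\{X=0\}$ is irrelevant. Your proposed workaround, by contrast, is not yet a proof. The images $\Phi\circ\gamma_+$ and $\Phi\circ\gamma_-$ can both lie on the $Y$-axis with \emph{opposite} vertical tangents without contradicting the bi-Lipschitz property or Corollary~\ref{cor: initial velocities with the same direction (bi-Lipschitz transformation)}, so the direction argument you sketch does not force ``at least one of $\Phi\circ\gamma_\pm$'' onto a genuine $\beta$-arc; and even granting that, the clause ``from which the $O(\abs{x}^\beta)$ estimate on the other side can then be recovered by a symmetric argument'' is left unexplained---Proposition~\ref{prop: order of growth of the second coordinate of a beta-isomorphism}(ii) needs both sides independently. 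The geometric picture is sound where it applies, but closing the vertical-axis case along your lines would require a genuinely new ingredient; it is simpler to switch to the Cauchy-bound computation the paper uses.
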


\begin{proof}
Suppose that 
\(V(F) \cap \{x > 0\} \neq \,\emptyset\)
and
\(V(F) \cap \{x < 0\} \neq \,\emptyset\).

Since 
\begin{equation*}
V(F) \cap \left\{x > 0\right\} = \bigcup_{t \in f_+^{-1}(0)} \left\{ (x,tx^\beta) : x > 0\right\},
\end{equation*}
the condition \(V(F) \cap \{x > 0\} \neq \,\emptyset\) implies that 
\,\(f_+^{-1}(0) \neq \emptyset\).

Take \(t_0\in\R\) such that \(f_+(t_0) = 0\).
In the notation used in the proof of Proposition \ref{prop: order of growth of the second coordinate of a beta-isomorphism}, we have:
\begin{equation*}
\abs{\frac{\Phi_2(x,t_0 \abs{x}^\beta)}{\abs{x}^\beta}} \leq
1 + \max\left\{ 
     \abs{\frac{c_{m-1}}{c_m}}\abs{\tilde x}^r, \ldots, 
     \abs{\frac{c_{1}}{c_m}}\abs{\tilde x}^{r(m-1)},
     \abs{\frac{c_0}{c_m}}\abs{\tilde x}^{rm}
     \right\} \,,
\end{equation*}
for \(x > 0\) sufficiently small. 
Since \(\lim_{x\to 0^+}\abs{\Phi_1(x,t_0\abs{x}^\beta)/x} < \infty\)
(this is guaranteed by Remark \ref{rk: finite nonzero initial velocity} along with equation (\ref{eq: equality of first components of initial velocities})), 
it follows that \(\Phi_2(x,t_0\abs{x}^\beta)/\abs{x}^\beta\) is bounded, 
for \(x > 0\) sufficiently small. 

Similarly, the assumption that \(V(F) \cap \{x < 0\} \neq \,\emptyset\) ensures the existence of \(t_1\in\R\) such that \(f_-(t_1) = 0\) and then , by adapting the argument above, we can prove that \(\Phi_2(x,t_1\abs{x}^\beta)/\abs{x}^\beta\) is bounded, for \(x < 0\) sufficiently small. 
By Proposition \ref{prop: order of growth of the second coordinate of a beta-isomorphism}, it follows that \(\Phi\) is a \(\beta\)-isomorphism. Now, the final statement is an immediate consequence of Proposition \ref{prop: action of the beta-transform of a beta-isomorphism}.
\end{proof}

\begin{corollary}
\label{cor: equivalent polynomials, Lipschitz equivalent height functions}
Let \(F(X,Y)\) and \(G(X,Y)\) be \(\beta\)-quasihomogeneous polynomials of degree \(d\geq 1\). Suppose that 
\(
V(F) \cap \{x > 0\} \neq \,\emptyset
\,\text{ and }
\,V(F) \cap \{x < 0\} \neq \,\emptyset.
\)
If \(F\) and \(G\) are \({\cal R}\)-semialgebraically Lipschitz equivalent, then
\begin{equation*}
\begin{cases}
f_+\cong g_+ \text{ and } \,f_-\cong g_-, & \text{ if \,\(\Phi\) is direct}\\ 
f_+\cong g_- \text{ and } \,f_-\cong g_+, & \text{ if \,\(\Phi\) is reverse}
\end{cases}
\ .
\end{equation*}
\end{corollary}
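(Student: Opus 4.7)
The plan is to observe that this corollary is essentially a direct reading of the theorem immediately preceding it, once one recalls the definition of Lipschitz equivalence of polynomial functions and the basic properties of the \(\beta\)-transform.

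First, I would apply the preceding theorem to the \(\beta\)-quasihomogeneous polynomials \(F,G\) and the germ of semialgebraic bi-Lipschitz homeomorphism \(\Phi\from(\R^2,0)\to(\R^2,0)\) witnessing their \({\cal R}\)-semialgebraic Lipschitz equivalence. The hypothesis on the zero sets of \(F\) is exactly the hypothesis of that theorem, so \(\Phi\) is a \(\beta\)-isomorphism, and its \(\beta\)-transform \((\lambda,\phi)\) produces the two equations (direct case or reverse case) relating \(f_\pm\) to \(g_\pm\).

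Second, I would invoke the proposition closing Section \ref{section: beta-isomorphisms and the beta-transform}, which tells us that for any \(\beta\)-isomorphism the components \(\phi_+,\phi_-\) of its \(\beta\)-transform are bi-Lipschitz functions \(\R\to\R\), and that \(\lambda_+,\lambda_-\) are nonzero real numbers. In particular, the constants \(\abs{\lambda_+}^{-d}\) and \(\abs{\lambda_-}^{-d}\) are strictly positive.

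Finally, I would match the two resulting equations against the definition of Lipschitz equivalence from the start of Section \ref{section: Lipschitz equivalence on a single variable}: a relation of the form \(g\circ\phi = c f\) with \(\phi\from\R\to\R\) bi-Lipschitz and \(c>0\) is, by definition, \(f\cong g\). Applied in the direct case to \(g_+\circ \phi_+ = \abs{\lambda_+}^{-d} f_+\) and \(g_-\circ \phi_- = \abs{\lambda_-}^{-d} f_-\), this gives \(f_+\cong g_+\) and \(f_-\cong g_-\); applied in the reverse case to \(g_+\circ \phi_- = \abs{\lambda_-}^{-d} f_-\) and \(g_-\circ \phi_+ = \abs{\lambda_+}^{-d} f_+\), this gives \(f_+\cong g_-\) and \(f_-\cong g_+\). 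There is no real obstacle here: the bulk of the work has already been absorbed into the theorem and into the bi-Lipschitz statement for the \(\phi_\pm\); the only task is to keep the direct/reverse bookkeeping straight.
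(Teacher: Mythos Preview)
Your proposal is correct and matches the paper's approach: the corollary is stated immediately after the theorem with no separate proof, precisely because the theorem already gives equations of the form \(g\circ\phi = c f\) with \(\phi\) bi-Lipschitz and \(c>0\), which is the definition of \(f\cong g\). Your explicit invocation of the proposition guaranteeing that \(\phi_+,\phi_-\) are bi-Lipschitz and \(\lambda_\pm\neq 0\) just spells out what the paper leaves implicit.
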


\section{The group of proto-transitions}
\label{section: proto-transitions}

Let \(\R^*\) be the multiplicative group of all nonzero real numbers, and 
let \({\cal L}\) be the group of all bi-Lipschitz Nash diffeomorphisms on \(\R\). 
Let \(H\coloneqq \{(\lambda_1,\lambda_2) \in \R^*\times\R^*:
\lambda_1\lambda_2 > 0\)\} (considered as a subgroup of the direct product 
\(\R^*\times\R^*\)), and let \(K\coloneqq {\cal L} \times {\cal L}\) (direct product). Define a binary operation on \(H\times K\) by setting:
\begin{equation*}
(\mu,\psi)\circ(\lambda,\phi)\coloneqq
\begin{cases}
\left( (\lambda_1\mu_1, \lambda_2\mu_2), 
    (\psi_1\circ \phi_1,\psi_2\circ\phi_2) \right),
    & \text{if \(\lambda > 0\)}\\
\left( (\lambda_1\mu_2, \lambda_2\mu_1), 
    (\psi_2\circ \phi_1,\psi_1\circ\phi_2) \right),
    & \text{if \(\lambda < 0\)}
\end{cases}
\end{equation*}
for all \((\lambda,\phi) = ((\lambda_1,\lambda_2),(\phi_1,\phi_2))\) and 
\((\mu,\psi) = ((\mu_1,\mu_2),(\psi_1,\psi_2))\),
where \(\lambda > 0\) means that \(\lambda_1 > 0\) and \(\lambda_2 > 0\),
and \(\lambda < 0\) means that \(\lambda_1 < 0\) and \(\lambda_2 < 0\).

\begin{proposition}
\label{prop: modified group operation}
\((H\times K, \circ)\) is a group. We call it the group of {\it proto-transitions}.
\end{proposition}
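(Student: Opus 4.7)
The plan is to recast the somewhat opaque two-case operation \(\circ\) as a single formula involving a sign homomorphism and a swap involution, and then verify the group axioms in this cleaner form. To set things up, let \(G_0 \coloneqq H\times K\) be equipped with the ordinary direct-product operation
\begin{equation*}
(\mu,\psi)\cdot(\lambda,\phi) \coloneqq ((\mu_1\lambda_1,\mu_2\lambda_2),(\psi_1\circ\phi_1,\psi_2\circ\phi_2));
\end{equation*}
this is a group because \(H\) is a subgroup of \(\R^*\times\R^*\) and \(K = {\cal L}\times{\cal L}\) is itself a direct product of groups. On \(G_0\), define the swap map \(\tau((\mu_1,\mu_2),(\psi_1,\psi_2)) \coloneqq ((\mu_2,\mu_1),(\psi_2,\psi_1))\), which is an involutive group automorphism, and the sign map \(p\from G_0\to\Z/2\Z\) given by \(p(\lambda,\phi) = 0\) if \(\lambda_1,\lambda_2 > 0\) and \(p(\lambda,\phi) = 1\) if \(\lambda_1,\lambda_2 < 0\). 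The map \(p\) is well-defined and a homomorphism of \(G_0\) precisely because of the defining condition \(\lambda_1\lambda_2 > 0\) of \(H\), and crucially \(p\circ\tau = p\), since swapping two components that share a common sign does not alter that sign.

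The key observation is that the operation in the statement can be rewritten as
\begin{equation*}
(\mu,\psi)\circ(\lambda,\phi) = \tau^{p(\lambda,\phi)}(\mu,\psi)\cdot(\lambda,\phi).
\end{equation*}
In this form, closure in \(H\times K\) is automatic. The identity is \(e \coloneqq ((1,1),(\id,\id))\), for which \(\tau(e) = e\) and \(p(e) = 0\), so \(e\circ g = e\cdot g = g\) and \(g\circ e = \tau^0(g)\cdot e = g\). For the inverse of a general \(g = (\lambda,\phi)\), I would propose \(h \coloneqq \tau^{p(g)}(g^{-1})\), where \(g^{-1}\) denotes the inverse in the direct-product group \(G_0\). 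Using \(\tau^2 = \id\) and that \(\tau\) is a homomorphism, a direct check gives \(h\circ g = \tau^{p(g)}(h)\cdot g = \tau^{2p(g)}(g^{-1})\cdot g = g^{-1}\cdot g = e\); and since \(p(h) = p(\tau^{p(g)}(g^{-1})) = p(g)\), one also has \(g\circ h = \tau^{p(g)}(g)\cdot\tau^{p(g)}(g^{-1}) = \tau^{p(g)}(g\cdot g^{-1}) = e\).

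The main obstacle will be associativity, and this is where the reformulation earns its keep. Using that \(\tau\) is a homomorphism of \(G_0\), one gets
\begin{equation*}
(g_3\circ g_2)\circ g_1 = \tau^{p(g_1)}(g_3\circ g_2)\cdot g_1 = \tau^{p(g_1)+p(g_2)}(g_3)\cdot\tau^{p(g_1)}(g_2)\cdot g_1.
\end{equation*}
For the other bracketing, the subpoint to establish is \(p(g_2\circ g_1) = p(g_2) + p(g_1)\), which follows from \(p\circ\tau = p\) combined with \(p\) being a homomorphism on \(G_0\); granting this,
\begin{equation*}
g_3\circ(g_2\circ g_1) = \tau^{p(g_2\circ g_1)}(g_3)\cdot(g_2\circ g_1) = \tau^{p(g_2)+p(g_1)}(g_3)\cdot\tau^{p(g_1)}(g_2)\cdot g_1,
\end{equation*}
matching the previous expression. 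This bypasses the four-case brute-force verification over the signs of \(\lambda,\mu\) that a direct approach would demand.
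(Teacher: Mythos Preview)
Your proof is correct and follows essentially the same route as the paper: both rewrite the operation as \(g\circ h = \alpha(h)(g)\cdot h\) for the direct-product operation \(\cdot\), where \(\alpha\) is the sign-dependent swap automorphism (your \(\tau^{p(\cdot)}\) is exactly the paper's \(\alpha = \theta\circ\pi\)), and both verify the axioms using that \(\alpha\) is invariant under its own image (your \(p\circ\tau = p\) is the paper's condition \(\alpha\circ\varphi = \alpha\)). The only cosmetic difference is that the paper packages the verification into an abstract lemma about any group equipped with such an \(\alpha\), which it then reuses for the subsequent group action, whereas you carry out the computation directly in this instance.
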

\begin{proof}
Let \((H\times K, \cdot)\) be the direct product of \(H\) and \(K\), so that
\begin{equation*}
(\mu,\psi)\cdot(\lambda,\phi) = 
\left( (\lambda_1\mu_1,\lambda_2\mu_2), (\psi_1\circ\phi_1,\psi_2\circ\phi_2) \right).
\end{equation*}

We express the operation \(\circ\) in terms of the operation \(\cdot\)\,, and then we use this expression to show that \((H\times K,\circ)\) is a group. 
Let \(\iota\from H\times K\to H\times K\) be the identity map and 
\(\tau\from H\times K\to H\times K\) be given by 
\(\tau((\lambda_1,\lambda_2),(\phi_1,\phi_2)) = 
((\lambda_2,\lambda_1),(\phi_2,\phi_1))\).

Define \(\theta\from H\to \text{Aut}(H\times K)\) by
\begin{equation*}
\theta(\lambda) \coloneqq
\begin{cases}
\iota, &\text{if }\lambda > 0\\
\tau, &\text{if }\lambda < 0 
\end{cases}
\end{equation*}
Clearly, \(\theta\) is a group homomorphism.

Let \(\pi\from H\times K\to H\) be the projection homomorphism. Then,
\(\alpha \coloneqq \theta\circ\pi\from H\times K\to \text{Aut}(H\times K)\)
is a group homomorphism such that:
\begin{enumerate}[label=\roman*.]
\item \(\alpha\circ\varphi = \alpha\) for all \(\varphi\in \text{Im}\,\alpha\)
\item \(\text{Im}\,\alpha\) is an abelian subgroup of \(\text{Aut}(H\times K)\)
\end{enumerate}
Also, we have:
\begin{equation*}
(\mu,\psi)\circ(\lambda,\phi) = 
   \left(\alpha(\lambda,\phi)(\mu,\psi)\right)\cdot(\lambda,\phi).
\end{equation*}
Hence, the result follows from the following lemma.
\begin{lemma}
\label{lemma: modified group operation}
Let \((G,\cdot)\) be a group and let \(\alpha\from G\to\text{Aut}(G)\) be a group homomorphism satisfying the following conditions: 
\begin{enumerate}[label = \roman*.]
\item \(\alpha\circ\varphi = \alpha\) for all \(\varphi\in \textnormal{Im}\,\alpha\)
\item \(\text{Im}\,\alpha\) is an abelian subgroup of \(\text{Aut}(G)\)
\end{enumerate}
Define a new operation \(\circ\) on \(G\) by setting
\(g\circ h \coloneqq \left(\alpha(h)(g)\right)\cdot h\).
Then, \((G,\circ)\) is a group.
\end{lemma}

\noindent
{\it Proof of Lemma \ref{lemma: modified group operation}.} First, we prove that the new operation is associative. For all \(g_1,g_2,g_3\in G\), we have:
\begin{align}
(g_1\circ g_2)\circ g_3 &= \left(\alpha(g_2)(g_1)\cdot g_2\right)\circ g_3 \nonumber \\
      &= \alpha(g_3)\left(\alpha(g_2)(g_1)\cdot g_2\right)\cdot g_3 \nonumber\\
      &= \alpha(g_3)\left(\alpha(g_2)(g_1)\right)\cdot\alpha(g_3)(g_2)\cdot g_3
      \label{Associativity of circ, part 1}
\end{align} 

On the other hand,
\begin{align}
g_1\circ (g_2\circ g_3) &= g_1\circ (\alpha(g_3)(g_2)\cdot g_3)
\nonumber\\
&= \alpha(\alpha(g_3)(g_2)\cdot g_3)(g_1)\cdot\alpha(g_3)(g_2)\cdot g_3
\nonumber\\
&= \alpha(\alpha(g_3)(g_2))(\alpha(g_3)(g_1))\cdot\alpha(g_3)(g_2)\cdot g_3
\nonumber\\
&\overset{(i)}{=} \alpha(g_2)(\alpha(g_3)(g_1))\cdot\alpha(g_3)(g_2)\cdot g_3
\nonumber\\
&\overset{(ii)}{=} \alpha(g_3)(\alpha(g_2)(g_1))\cdot\alpha(g_3)(g_2)\cdot g_3
\label{Associativity of circ, part 2}
\end{align}
From (\ref{Associativity of circ, part 1}) and (\ref{Associativity of circ, part 2}), it follows that \((g_1\circ g_2)\circ g_3 = g_1\circ (g_2\circ g_3)\).

Now we prove that the identity element \(1\) of the group \((G,\cdot)\) is also an identity element of \(G\) with respect to the operation \(\circ\). In fact, for all \(g\in G\), we have:
\begin{align*}
&1\circ g = \alpha(g)(1)\cdot g = 1\cdot g = g\\
&g\circ 1 = \alpha(1)(g)\cdot 1 = \id(g) = g
\end{align*}

Finally, we prove that each element \(g\in G\) has an inverse with respect to the operation \(\circ\). First, notice that for all \(g,h\in G\),
\begin{equation*}
h\circ g = 1 \iff \alpha(g)(h)\cdot g = 1 \iff \alpha(g)(h) = g^{-1}
\iff h = \alpha(g)^{-1}(g^{-1}) \iff h = \alpha(g^{-1})(g^{-1})\,.
\end{equation*}
Thus, \(h = \alpha(g^{-1})(g^{-1})\) is a left inverse of \(g\) with respect to the operation \(\circ\). Let us show that \(h\) is also a right inverse of \(g\). In fact,
\begin{align*}
g\circ\alpha(g^{-1})(g^{-1}) &= 
\alpha(\alpha(g^{-1})(g^{-1}))(g)\cdot \alpha(g^{-1})(g^{-1}) \overset{(i)}{=} 
\alpha(g^{-1})(g)\cdot \alpha(g^{-1})(g^{-1})\\
&= \alpha(g^{-1})(g\cdot g^{-1}) = \alpha(g^{-1})(1) = 1\,.
\end{align*}
Hence, \(h = \alpha(g^{-1})(g^{-1})\) is the inverse of \(g\) with respect to the operation \(\circ\).
\end{proof}

Now, we define a family of actions of the group of proto-transitions on the set 
\(C^\omega\times C^\omega\), where \(C^\omega\) is the set of all real analytic functions on \(\R\).

\begin{proposition}
For each \(d\geq 1\), the map \(\circ\from (C^\omega\times C^\omega)\times (H\times K) \to C^\omega\times C^\omega\) defined by
\begin{equation*}
(g_1,g_2)\circ (\lambda,\phi) \coloneqq
\begin{cases}
\left(\abs{\lambda_1}^d g_1\circ\phi_1, \abs{\lambda_2}^d g_2\circ\phi_2\right),
&{if}\quad\lambda > 0\\[5pt]
\left(\abs{\lambda_1}^d g_2\circ\phi_1, \abs{\lambda_2}^d g_1\circ\phi_2\right),
&{if}\quad\lambda < 0
\end{cases}
\end{equation*}
is an action of the group of proto-transitions on \(C^\omega\times C^\omega\).
\end{proposition}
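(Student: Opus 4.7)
The plan is to verify the two defining axioms of a right group action: (a) the identity element of \((H \times K, \circ)\) acts trivially on every \((g_1, g_2) \in C^\omega \times C^\omega\); and (b) for all \(a, b \in H \times K\) and every \((g_1, g_2)\), one has
\(((g_1, g_2) \circ a) \circ b = (g_1, g_2) \circ (a \circ b)\).

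For axiom (a), Lemma \ref{lemma: modified group operation} (used in the proof of Proposition \ref{prop: modified group operation}) tells us that the identity of \((H \times K, \circ)\) coincides with the identity of the direct product \((H \times K, \cdot)\), namely \(e \coloneqq ((1,1),(\id,\id))\). Since the first component of \(e\) is positive, the direct case of the action applies, and
\((g_1, g_2) \circ e = (1^d\, g_1 \circ \id,\, 1^d\, g_2 \circ \id) = (g_1, g_2)\).

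For axiom (b), the approach is to split into four cases according to the signs of the first components of \(a = (\mu, \psi)\) and \(b = (\lambda, \phi)\). A useful preliminary observation is that the sign of the first component of \(a \circ b\) is the product of the signs of \(\mu\) and \(\lambda\): if \(\lambda > 0\) then \((a \circ b)\)'s first component is \((\lambda_1 \mu_1, \lambda_2 \mu_2)\), whose sign equals that of \(\mu\); if \(\lambda < 0\) it is \((\lambda_1 \mu_2, \lambda_2 \mu_1)\), whose sign is opposite to that of \(\mu\). This rule selects which case of the action applies on the right-hand side of (b) in each scenario.

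In each of the four sign combinations, the verification is a direct expansion: the scalar factors combine via \(\abs{\lambda_i}^d \abs{\mu_j}^d = \abs{\lambda_i \mu_j}^d\), function compositions associate as \(\psi_i \circ \phi_j\), and the indices of the \(g_i\) line up correctly thanks to the swap rules in both the group law and the action. For instance, when \(\mu > 0\) and \(\lambda < 0\), the left-hand side expands to
\((\abs{\lambda_1 \mu_2}^d\, g_2 \circ \psi_2 \circ \phi_1,\ \abs{\lambda_2 \mu_1}^d\, g_1 \circ \psi_1 \circ \phi_2)\), while the right-hand side, after applying the reverse case of the action (forced by the negative sign of \(a \circ b\)'s first component), yields the same pair. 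The only obstacle is purely organizational bookkeeping — carefully tracking which indices are swapped on each side and which action case is selected for \(a \circ b\); I would organize the check as a four-row table indexed by the sign combinations to avoid index and sign errors.
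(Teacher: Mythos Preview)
Your proof is correct, but it takes a different route from the paper's. You verify the two action axioms by a direct four-case expansion according to the signs of \(\mu\) and \(\lambda\). The paper instead introduces an auxiliary action \(\cbullet\) of the \emph{direct product} \((H\times K,\cdot)\) on \(C^\omega\times C^\omega\) (the ``no swap'' formula), together with a swap homomorphism \(A\from (H\times K,\circ)\to\mathrm{Bij}(C^\omega\times C^\omega)\), and then expresses the map in the statement as \((g_1,g_2)\circ(\lambda,\phi) = (A(\lambda,\phi)(g_1,g_2))\cbullet(\lambda,\phi)\). The verification is then delegated to a general Lemma~\ref{lemma: modified group action}, which is the action-level analogue of Lemma~\ref{lemma: modified group operation} used to build the group \((H\times K,\circ)\) itself. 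Your approach is more elementary and self-contained for this single result; the paper's approach is more structural, makes the parallel with Proposition~\ref{prop: modified group operation} explicit, and packages the sign-tracking into two reusable abstract lemmas rather than repeating the case analysis.
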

\begin{proof}
First, we notice that the map
\(\cbullet\from (C^\omega\times C^\omega)\times (H\times K) \to C^\omega\times C^\omega\) given by
\begin{equation*}
(g_1,g_2)\cbullet (\lambda,\phi)\coloneqq
(\abs{\lambda_1}^d g_1\circ\phi_1, \abs{\lambda_2}^d g_2\circ\phi_2)
\end{equation*}
is an action of the direct product \((H\times K,\cdot)\) on 
\(C^\omega\times C^\omega\). In fact, we have:
\begin{align*}
((g_1,g_2)\cbullet (\mu,\psi)) \cbullet (\lambda,\phi) &= 
(\abs{\mu_1}^d g_1\circ\psi_1, \abs{\mu_2}^d g_2\circ\psi_2)\cbullet (\lambda,\phi)\\
&= (\abs{\lambda_1}^d\abs{\mu_1}^d(g_1\circ\psi_1)\circ\phi_1,
\abs{\lambda_2}^d\abs{\mu_2}^d(g_2\circ\psi_2)\circ\phi_2)\\
&= (\abs{\lambda_1\mu_1}^d g_1\circ(\psi_1\circ\phi_1), 
\abs{\lambda_2\mu_2}^d g_2\circ(\psi_2\circ\phi_2))\\
&= (g_1,g_2)\cbullet((\mu,\psi)\cdot(\lambda,\phi))
\end{align*}
and
\begin{equation*}
(g_1,g_2)\cbullet((\id_\R,\id_\R),(1,1))  
= (\abs{1}^d g_1\circ \id_\R, \abs{1}^d g_2\circ \id_\R)
= (g_1,g_2)
\end{equation*}

Now, we express the map \(\circ\) in terms of the action \(\cbullet\), and then we use this expression to show that the map \(\circ\) is an action of the group of proto-transitions on \(C^\omega\times C^\omega\).

Denote by \(\text{Bij}(C^\omega\times C^\omega)\) the group of all bijections on \(C^\omega\times C^\omega\). Let \(I\from C^\omega\times C^\omega\to C^\omega\times C^\omega\) be the identity map and \(T\from C^\omega\times C^\omega\to C^\omega\times C^\omega\) be given by 
\(T(g_1,g_2) = (g_2,g_1)\). Define 
\(\Theta\from H\to\text{Bij}(C^\omega\times C^\omega)\) by
\begin{equation*}
\Theta(\lambda) \coloneqq
\begin{cases}
I, &\text{if}\quad\lambda > 0\\
T, &\text{if}\quad\lambda < 0\\
\end{cases}\,.
\end{equation*}
Clearly, \(\Theta\) is a group homomorphism. Let \(\pi\from H\times K\to H\) be the projection homomorphism. Then 
\(A\coloneqq \Theta\circ\pi
     \from (H\times K,\circ) \to \text{Bij}(C^\omega\times C^\omega)\)
is a group homomorphism such that:
\begin{enumerate}[label=\Roman*.]
\item \(A(\mu,\psi)((g_1,g_2)\cbullet(\lambda,\phi)) = 
(A(\mu,\psi)(g_1,g_2))\cbullet\alpha(\mu,\psi)(\lambda,\phi)\),
where \(\alpha = \theta\circ\pi\from H\times K\to \text{Aut}(H\times K,\cdot)\) is the group homomorphism defined in the proof of Proposition \ref{prop: modified group operation}.
\item \(\text{Im}\,A\) is an abelian subgroup of 
\(\text{Bij}(C^\omega\times C^\omega)\).
\end{enumerate}
Also, we have:
\begin{equation*}
(g_1,g_2)\circ(\lambda,\phi) = 
     (A(\lambda,\phi)(g_1,g_2))\cbullet(\lambda,\phi).
\end{equation*}
Hence, the result follows from the following lemma.
\begin{lemma}
\label{lemma: modified group action}
Let us use the notation of the Lemma \ref{lemma: modified group operation}. Also, let \(X\) be a set, \(\cbullet\from X\times(G,\cdot)\to X\) a group action, and \(A\from (G,\circ)\to \text{Bij\/}(X)\) a group homomorphism satisfying the following conditions:
\begin{enumerate}[label=\Roman*.]
\item \(A(h)(x\cbullet g) = 
(A(h)(x))\cbullet\alpha(h)(g)\) for all \(x\in X\) and \(g,h\in G\).
\item \(\text{Im}\,A\) is an abelian subgroup of \(\text{Bij\/}(X)\).
\end{enumerate}
Then, the map \(\circ\from X\times(G,\circ)\to X\) defined by
\begin{equation*}
x\circ g \coloneqq (A(g)(x))\cbullet g
\end{equation*}
is a group action.
\end{lemma}

\noindent
{\it Proof of Lemma \ref{lemma: modified group action}.}
For all \(x\in X\) and \(g,h\in G\), we have:
\begin{align*}
(x\circ g)\circ h &= 
     (A(h)(x\circ g))\cbullet h = (A(h)((A(g)(x))\cbullet g))\cbullet h\\
&\overset{(I)}{=} ((A(h)(A(g)(x)))\cbullet\alpha(h)(g))\cbullet h
= ((A(h)\circ A(g))(x))\cbullet(\alpha(h)(g)\cdot h)\\
&\overset{(II)}{=} ((A(g)\circ A(h))(x))\cbullet(\alpha(h)(g)\cdot h)
= (A(g\circ h)(x))\cbullet(g\circ h)\\
&= x\circ(g\circ h)
\end{align*}
Also, for all \(x\in X\),
\begin{equation*}
x\circ 1 = (A(1)(x))\cbullet 1 = A(1)(x) = \id_X(x) = x.
\end{equation*}
\end{proof}

\section{\(\beta\)-transitions and the inverse \(\beta\)-transform}
\label{section: beta-transitions and the inverse beta-transform}

Denote by \({\cal P}\) the set of all real polynomial functions on \(\R\). A proto-transition \((\lambda,\phi)\) is said to be a {\it \(\beta\)-transition} if the following conditions are satisfied:
\begin{enumerate}[label = \roman*.]
\item There exist pairs of nonconstant polynomial functions 
\((f_1,f_2), (g_1,g_2)\in {\cal P}\times{\cal P}\) such that
\begin{equation*}
(g_1,g_2)\circ (\lambda,\phi) = (f_1,f_2)
\end{equation*}
\item \(\displaystyle 
   \abs{\lambda_1}^\beta\cdot\lim_{\abs{t}\to\infty} \frac{\phi_1(t)}{t} = 
\abs{\lambda_2}^\beta\cdot\lim_{\abs{t}\to\infty} \frac{\phi_2(t)}{t}\)
\end{enumerate}

\begin{remark}
The limits in \((ii)\) are well-defined. In fact, for any homeomorphism 
\(\phi\from\R\to\R\), if there exist nonconstant polynomial functions 
\(f,g\from\R\to\R\) such that \(g\circ\phi = f\), then the limit 
\(\lim_{\abs{t}\to+\infty} \phi(t)/t\) exists and is finite.
\end{remark}

\begin{remark}
The identity element \(((1,1),(\id_\R,\id_\R))\) of the group of the proto-transitions is a \(\beta\)-transition, which we call the {\it identity \(\beta\)-transition}, and the inverse (with respect to the operation \(\circ\)) of a \(\beta\)-transition is a \(\beta\)-transition.
\end{remark}

Given a \(\beta\)-transition \((\lambda,\phi)\), we define a map 
\(\Phi\from\R^2\to\R^2\) by setting:
\begin{itemize}
\item \(\Phi(x,t\abs{x}^\beta) \coloneqq 
\left(\lambda_1 x, \abs{\lambda_1}^\beta \phi_1(t) \abs{x}^\beta\right),\quad
\text{for } x > 0,\, t\in\R\)

\item \(\Phi(x,t\abs{x}^\beta) \coloneqq 
\left(\lambda_2 x, \abs{\lambda_2}^\beta \phi_2(t) \abs{x}^\beta\right),\quad
\text{for } x < 0,\, t\in\R\)

\item \(\displaystyle\Phi(0,y) \coloneqq 
\left(0,\abs{\lambda_1}^\beta\lim_{\abs{t}\to+\infty} \frac{\phi_1(t)}{t}\,y\right)
= 
\left(0,\abs{\lambda_2}^\beta\lim_{\abs{t}\to+\infty} \frac{\phi_2(t)}{t}\,y\right),
\quad\text{for all }y\in\R\)
\end{itemize}
The {\it inverse \(\beta\)-transform} of \((\lambda,\phi)\) is the germ 
\(\Phi\from(\R^2,0)\to(\R^2,0)\) determined by the map \(\Phi\).

\begin{lemma}
Let \(\phi,\psi\from\R\to\R\) be functions for which the limits \/
\(\lim_{\abs{t}\to+\infty}\phi(t)/t\) and \/ \(\lim_{\abs{t}\to+\infty}\psi(t)/t\) exist and are nonzero. Then,
\begin{equation*}
\lim_{\abs{t}\to+\infty} \frac{\psi(\phi(t))}{t} = 
\lim_{\abs{t}\to+\infty} \frac{\psi(t)}{t} 
     \cdot \lim_{\abs{t}\to+\infty} \frac{\phi(t)}{t}\,.
\end{equation*}
\end{lemma}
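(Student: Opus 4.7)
The plan is to factor the quantity of interest as a product and apply the product rule for limits, using nonvanishing of the given limits to ensure that division by \(\phi(t)\) is legitimate and that the composition \(\psi\circ\phi\) can be controlled via the limit of \(\psi(s)/s\) as \(\abs{s}\to+\infty\). Set \(L_\phi\coloneqq\lim_{\abs{t}\to+\infty}\phi(t)/t\) and \(L_\psi\coloneqq\lim_{\abs{t}\to+\infty}\psi(t)/t\), both nonzero by hypothesis. For \(\abs{t}\) large enough we have \(\abs{\phi(t)/t}\geq \abs{L_\phi}/2 > 0\), so in particular \(\phi(t)\neq 0\), and we may write
\begin{equation*}
\frac{\psi(\phi(t))}{t} = \frac{\psi(\phi(t))}{\phi(t)}\cdot\frac{\phi(t)}{t}\,.
\end{equation*}

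The second factor tends to \(L_\phi\) by assumption. For the first factor, I would first observe that \(\abs{\phi(t)}\to +\infty\) as \(\abs{t}\to+\infty\): from \(\abs{\phi(t)}=\abs{t}\cdot\abs{\phi(t)/t}\geq \abs{t}\cdot\abs{L_\phi}/2\) for \(\abs{t}\) large, the right-hand side diverges. Hence, by the composition-of-limits rule applied to the function \(s\mapsto\psi(s)/s\) along the curve \(s=\phi(t)\), we obtain \(\psi(\phi(t))/\phi(t)\to L_\psi\) as \(\abs{t}\to+\infty\). Multiplying the two limits yields \(L_\psi\cdot L_\phi\), which is exactly the claimed identity.

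No step in this argument poses a real obstacle; the only subtlety worth flagging is the necessity of the hypothesis \(L_\phi\neq 0\), which is used twice: once to guarantee \(\phi(t)\neq 0\) for \(\abs{t}\) large (so the factorization is defined), and once to guarantee \(\abs{\phi(t)}\to+\infty\) (so the ``outer'' limit of \(\psi(s)/s\) can be read off along \(s=\phi(t)\)). The hypothesis \(L_\psi\neq 0\) is in fact not needed for the stated equation, but it is consistent with the context in which the lemma will be applied to \(\beta\)-transforms.
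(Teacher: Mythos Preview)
Your proof is correct and follows essentially the same route as the paper's: both arguments use \(L_\phi\neq 0\) to obtain \(\abs{\phi(t)}\to+\infty\), then factor \(\psi(\phi(t))/t=\bigl(\psi(\phi(t))/\phi(t)\bigr)\cdot\bigl(\phi(t)/t\bigr)\) and apply the composition-of-limits rule to the first factor. Your version is in fact a bit more explicit about why the factorization is legitimate, and your observation that the hypothesis \(L_\psi\neq 0\) is unnecessary for the stated identity is correct.
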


\begin{proof}
Since \(\lim_{\abs{t}\to+\infty} \abs{\phi(t)/t} > 0\), we have 
\(\lim_{\abs{t}\to+\infty}\abs{\phi(t)} = +\infty\). Then,
\begin{equation*}
\lim_{\abs{t}\to+\infty}\frac{\psi(t)}{t} = 
\lim_{\abs{t}\to+\infty}\frac{\psi(\phi(t))}{\phi(t)}\,.
\end{equation*}
Hence,
\begin{equation*}
\lim_{\abs{t}\to+\infty}\frac{\psi(\phi(t))}{t} = 
\lim_{\abs{t}\to+\infty}\frac{\psi(t)}{t}
\cdot
\lim_{\abs{t}\to+\infty}\frac{\phi(t)}{t}\,.
\end{equation*}
\end{proof}

\begin{proposition}
Let \((\lambda,\phi)\) and \((\mu,\psi)\) be \(\beta\)-transitions, 
let \(\Phi\) and \(\Psi\) be their respective inverse \(\beta\)-transforms, and let 
\(Z\) be the inverse \(\beta\)-transform of \((\mu,\psi)\circ(\lambda,\phi)\).
Then, \(Z = \Psi\circ\Phi\).
\end{proposition}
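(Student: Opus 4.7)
\medskip

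\noindent\textbf{Proof proposal.} The plan is to verify the equality $Z = \Psi\circ\Phi$ by direct computation on the three regions into which the definition of an inverse \(\beta\)-transform partitions \(\R^2\): the right half plane \(\{x>0\}\), the left half plane \(\{x<0\}\), and the \(y\)-axis \(\{x=0\}\). On each half plane the map \(\Phi\) is defined via the parameterization \((x,t\abs{x}^\beta)\), so the core task is to track how the sign of \(\lambda\) (and hence of the first coordinate \(\lambda_i x\) of \(\Phi(x,t\abs{x}^\beta)\)) decides which branch of \(\Psi\) gets applied when we reparameterize the image as \((\tilde x,\tilde t\abs{\tilde x}^\beta)\).

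First I would treat the case \(\lambda > 0\). Fixing \(x > 0\) and writing \(\tilde x \coloneqq \lambda_1 x > 0\), the identity \(\abs{\tilde x}^\beta = \abs{\lambda_1}^\beta\abs{x}^\beta\) lets me rewrite \(\Phi(x,t\abs{x}^\beta) = (\tilde x, \phi_1(t)\abs{\tilde x}^\beta)\). Since \(\tilde x > 0\), the first branch in the definition of \(\Psi\) applies and yields
\begin{equation*}
\Psi(\Phi(x,t\abs{x}^\beta)) = \bigl(\mu_1\lambda_1 x,\; \abs{\mu_1}^\beta\abs{\lambda_1}^\beta\,\psi_1(\phi_1(t))\,\abs{x}^\beta\bigr),
\end{equation*}
which is exactly the value of \(Z(x,t\abs{x}^\beta)\) after substituting \((\lambda_1\mu_1,\lambda_2\mu_2)\) and \((\psi_1\circ\phi_1,\psi_2\circ\phi_2)\) into the definition of the inverse \(\beta\)-transform. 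The analogous check for \(x<0\) is symmetric. When \(\lambda < 0\), the same computation is carried out, but now \(\tilde x = \lambda_1 x\) has the opposite sign from \(x\), so the \emph{other} branch of \(\Psi\) fires, which is precisely why the group operation \((\mu,\psi)\circ(\lambda,\phi)\) permutes the pairs in the second clause. Matching the two sides reduces to an algebraic rearrangement of absolute values of the \(\lambda_i,\mu_i\).

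For the \(y\)-axis, I would invoke the preceding lemma on the multiplicativity of the limit \(\lim_{\abs{t}\to+\infty}\phi(t)/t\) under composition. Writing \(c_\Phi \coloneqq \abs{\lambda_1}^\beta\lim_{\abs{t}\to\infty}\phi_1(t)/t\) (which by condition (ii) of a \(\beta\)-transition equals the version with index~\(2\)), and similarly \(c_\Psi, c_Z\), the computation \(\Psi(\Phi(0,y)) = (0,c_\Psi c_\Phi y)\) matches \(Z(0,y) = (0,c_Z y)\) provided \(c_Z = c_\Psi c_\Phi\); the lemma gives exactly this, with the sign case \(\lambda<0\) being handled by the observation that condition (ii) lets us freely swap the index used to compute \(c_\Psi\) so that it pairs with the index coming out of \(\Phi\).

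The step I expect to be the main bookkeeping obstacle is not any single estimate, but the sign–index accounting: making sure that in the \(\lambda<0\) case the indices surviving into \(Z\) (namely the crossed pairings \(\lambda_1\mu_2,\lambda_2\mu_1\) and \(\psi_2\circ\phi_1,\psi_1\circ\phi_2\)) are precisely the ones produced by composing the two branches of \(\Psi\) dictated by the signs of \(\lambda_1 x\) and \(\lambda_2 x\). Once this is laid out cleanly in a case table, the verification is a line-by-line substitution.
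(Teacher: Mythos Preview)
Your proposal is correct and follows essentially the same approach as the paper: a direct case-by-case verification on the three regions \(\{x>0\}\), \(\{x<0\}\), \(\{x=0\}\), using the reparameterization \(\tilde x = \lambda_i x\) on the half-planes and the multiplicativity lemma for \(\lim_{|t|\to\infty}\phi(t)/t\) on the \(y\)-axis. The only cosmetic difference is that the paper groups the computation first by the sign of \(x\) and then splits on \(\lambda\gtrless 0\), whereas you split first on the sign of \(\lambda\); the underlying sign--index bookkeeping you describe is exactly what the paper carries out.
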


\begin{proof}
For all \(x > 0, t\in \R\),
\begin{align*}
\Psi(\Phi(x,t\abs{x}^\beta)) &= 
     \Psi\left(\lambda_1 x, \abs{\lambda_1}^\beta \phi_1(t) \abs{x}^\beta\right)\\
&=
\begin{cases}
\left(\lambda_1\mu_1 x, 
     \abs{\lambda_1\mu_1}^\beta \psi_1(\phi_1(t)) \abs{x}^\beta\right), &
     \text{if }\lambda > 0\\[5pt]
\left(\lambda_1\mu_2 x, 
     \abs{\lambda_1\mu_2}^\beta \psi_2(\phi_1(t)) \abs{x}^\beta\right), &
     \text{if }\lambda < 0
\end{cases}\\
&= Z(x,t\abs{x}^\beta)\,.
\end{align*}

For all \(x < 0, t\in \R\),
\begin{align*}
\Psi(\Phi(x,t\abs{x}^\beta)) &= 
     \Psi\left(\lambda_2 x, \abs{\lambda_2}^\beta \phi_2(t) \abs{x}^\beta\right)\\
&=
\begin{cases}
\left(\lambda_2\mu_2 x, 
     \abs{\lambda_2\mu_2}^\beta \psi_2(\phi_2(t)) \abs{x}^\beta\right), &
     \text{if }\lambda > 0\\[5pt]
\left(\lambda_2\mu_1 x, 
     \abs{\lambda_2\mu_1}^\beta \psi_1(\phi_2(t)) \abs{x}^\beta\right), &
     \text{if }\lambda < 0
\end{cases}\\
&= Z(x,t\abs{x}^\beta)\,.
\end{align*}

Now, we prove that \(Z(0,y) = \Psi(\Phi(0,y))\), for all \(y\in\R\). 
For \(\lambda > 0\), we have:
\begin{align*}
Z(0,y) &= \left(0, 
   \abs{\lambda_1\mu_1}^\beta\cdot
   \lim_{\abs{t}\to+\infty} \frac{\psi_1\circ\phi_1(t)}{t}\cdot y\right)\\
&= \left(0, 
   \abs{\lambda_1\mu_1}^\beta\cdot
   \lim_{\abs{t}\to+\infty}\frac{\psi_1(t)}{t}\cdot
   \lim_{\abs{t}\to+\infty}\frac{\phi_1(t)}{t}\cdot y\right)\\
&= \left(0, 
   \left(\abs{\mu_1}^\beta
   \lim_{\abs{t}\to+\infty}\frac{\psi_1(t)}{t}\right)\cdot
   \left(\abs{\lambda_1}^\beta 
   \lim_{\abs{t}\to+\infty}\frac{\phi_1(t)}{t}\right)\cdot
   y\right)\\
&= \Psi(\Phi(0,y)).
\end{align*}
And for \(\lambda < 0\), we have:
\begin{align*}
Z(0,y) &= \left(0, 
   \abs{\lambda_1\mu_2}^\beta\cdot
   \lim_{\abs{t}\to+\infty} \frac{\psi_2\circ\phi_1(t)}{t}\cdot y\right)\\
&= \left(0, 
   \abs{\lambda_1\mu_2}^\beta\cdot
   \lim_{\abs{t}\to+\infty}\frac{\psi_2(t)}{t}\cdot
   \lim_{\abs{t}\to+\infty}\frac{\phi_1(t)}{t}\cdot y\right)\\
&= \left(0, 
   \left(\abs{\mu_2}^\beta
   \lim_{\abs{t}\to+\infty}\frac{\psi_2(t)}{t}\right)\cdot
   \left(\abs{\lambda_1}^\beta 
   \lim_{\abs{t}\to+\infty}\frac{\phi_1(t)}{t}\right)\cdot
   y\right)\\
&= \Psi(\Phi(0,y)).
\end{align*}
\end{proof}

\begin{proposition}
The inverse \(\beta\)-transform of the identity \(\beta\)-transition is the germ of the identity map \(I\from (\R^2,0)\to(\R^2,0)\).
\end{proposition}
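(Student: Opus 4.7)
The plan is to verify this directly by substitution into the piecewise definition of the inverse $\beta$-transform. Writing the identity $\beta$-transition as $(\lambda,\phi)$ with $\lambda_1=\lambda_2=1$ and $\phi_1=\phi_2=\id_\R$, I would simply plug these values into each of the three defining clauses and check that the resulting map equals the identity on $(\R^2,0)$.

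First, for the clauses covering $x\neq 0$, I would compute, for $x>0$ and any $t\in\R$,
\begin{equation*}
\Phi(x,t\abs{x}^\beta)
=(\lambda_1 x,\,\abs{\lambda_1}^\beta\phi_1(t)\abs{x}^\beta)
=(x,\,t\abs{x}^\beta),
\end{equation*}
and analogously, using $\lambda_2=1$ and $\phi_2=\id_\R$, obtain $\Phi(x,t\abs{x}^\beta)=(x,t\abs{x}^\beta)$ for $x<0$. Since for every $(x,y)$ with $x\neq 0$ there is a unique $t=y/\abs{x}^\beta$ placing $(x,y)$ in the form $(x,t\abs{x}^\beta)$, this shows $\Phi(x,y)=(x,y)$ whenever $x\neq 0$.

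Next, for the clause $x=0$, I would note that $\lim_{\abs{t}\to+\infty}\id_\R(t)/t=1$, so
\begin{equation*}
\Phi(0,y)=\left(0,\,\abs{\lambda_1}^\beta\lim_{\abs{t}\to+\infty}\frac{\phi_1(t)}{t}\,y\right)=(0,y)
\end{equation*}
for all $y\in\R$. Combining the three cases, $\Phi$ agrees with the identity on a full neighbourhood of the origin, so the germ $\Phi\colon(\R^2,0)\to(\R^2,0)$ coincides with the germ of $I$.

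There is no real obstacle here; the only point requiring a line of care is the bookkeeping check that the three pieces of the piecewise definition together exhaust a neighbourhood of $0$ and agree on overlaps (which is trivially none, since $x>0$, $x<0$, $x=0$ partition $\R^2$). The result is essentially a sanity check confirming that the definitions of the group identity and of the inverse $\beta$-transform are compatible.
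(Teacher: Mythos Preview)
Your proof is correct and follows exactly the approach the paper takes: the paper simply writes ``Immediate from the definition,'' and your argument is precisely the direct verification that makes this immediacy explicit.
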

\begin{proof}
Immediate from the definition.
\end{proof}

\begin{corollary}
\label{cor: Inverse beta-transform of the inverse of a beta-transition}
Let \((\lambda,\phi)\) be a \(\beta\)-transition and \(\Phi\) its inverse \(\beta\)-transform. Then, the inverse \(\beta\)-transform of \((\lambda,\phi)^{-1}\) is 
\(\Phi^{-1}\).
\end{corollary}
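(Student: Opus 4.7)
The plan is to derive this corollary as an immediate consequence of the two preceding facts: (a) the inverse $\beta$-transform is compatible with composition (if $(\mu,\psi)$ and $(\lambda,\phi)$ are $\beta$-transitions with inverse $\beta$-transforms $\Psi$ and $\Phi$, then the inverse $\beta$-transform of $(\mu,\psi)\circ(\lambda,\phi)$ is $\Psi\circ\Phi$), and (b) the inverse $\beta$-transform of the identity $\beta$-transition is the germ of the identity map $I$. The only additional fact I need is the remark noted earlier stating that $(\lambda,\phi)^{-1}$, the inverse in the group of proto-transitions, is itself a $\beta$-transition, so that its inverse $\beta$-transform is defined and the composition proposition applies.

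Concretely, let $\Phi'$ denote the inverse $\beta$-transform of $(\lambda,\phi)^{-1}$. Applying (a) to the pair $((\lambda,\phi)^{-1},(\lambda,\phi))$, the inverse $\beta$-transform of $(\lambda,\phi)^{-1}\circ(\lambda,\phi)$ equals $\Phi'\circ\Phi$. But $(\lambda,\phi)^{-1}\circ(\lambda,\phi)$ is the identity $\beta$-transition by definition of the group inverse, so by (b) we get $\Phi'\circ\Phi = I$ as germs at the origin. Symmetrically, applying (a) to the pair $((\lambda,\phi),(\lambda,\phi)^{-1})$ and using $(\lambda,\phi)\circ(\lambda,\phi)^{-1} = \mathrm{id}$ in the group of proto-transitions together with (b), we obtain $\Phi\circ\Phi' = I$.

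Since $\Phi$ and $\Phi'$ are germs of maps $(\R^2,0)\to(\R^2,0)$ that compose to the identity on both sides, $\Phi'$ is the two-sided inverse of $\Phi$, i.e., $\Phi' = \Phi^{-1}$, which is exactly the claim. There is essentially no obstacle here: the only point that requires care is checking that $(\lambda,\phi)^{-1}$ qualifies as a $\beta$-transition (so that the composition proposition is applicable), and this has already been recorded in the remark preceding the construction of the inverse $\beta$-transform.
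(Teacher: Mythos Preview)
Your proof is correct and is exactly the argument the paper has in mind: the corollary is stated without proof because it follows immediately from the two preceding propositions (compatibility of the inverse $\beta$-transform with composition, and the fact that the identity $\beta$-transition maps to the identity germ), together with the remark that $(\lambda,\phi)^{-1}$ is again a $\beta$-transition. There is nothing to add.
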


Now, we prove that the inverse \(\beta\)-transform of a \(\beta\)-transition is a germ of bi-Lipschitz map.

\begin{lemma}
\label{lemma: Lipschitz on the strip H_delta, right half-plane}
Let \(\phi\from\R\to\R\) be a bi-Lipschitz function such that \(g\circ \phi = f\) for some nonconstant polynomial functions \(f,g\from\R\to\R\), and let \(\lambda\) be a nonzero real number. Then, the map 
\(\Phi\from {\cal H}\to\R^2\), defined on the right half-plane 
\({\cal H}\coloneqq \{(x,y)\in\R^2: x > 0\}\) by
\begin{equation*}
\Phi(x,tx^\beta) \coloneqq (\lambda x, \abs{\lambda}^\beta\phi(t)x^\beta)
\end{equation*}
for all \(x>0\) and \(t\in\R\), is Lipschitz on the strip 
\({\cal H}_\delta \coloneqq \{(x,y)\in\R^2 : 0 < x < \delta\}\), for each 
\(\delta > 0\).
\end{lemma}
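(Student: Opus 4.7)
The plan is to show that $\Phi$ is $C^1$ on the convex strip $\mathcal{H}_\delta$ with uniformly bounded Jacobian; by the mean value inequality, this forces $\Phi$ to be Lipschitz on $\mathcal{H}_\delta$. With $t = y/x^\beta$, the map takes the explicit form $\Phi(x,y) = (\lambda x,\, |\lambda|^\beta x^\beta \phi(y/x^\beta))$. By Lemma~\ref{lemma: immediate consequences Lipschitz equivalence equation} (applied with $c=1$) we have $\deg f = \deg g = d$, and then Lemma~\ref{lemma: bi-Lipschitz iff bi-analytic} gives that $\phi$ is bi-analytic, so $\Phi$ is real analytic on $\mathcal{H}$. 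Direct differentiation yields $\partial_x\Phi_1 = \lambda$, $\partial_y\Phi_1 = 0$, $\partial_y\Phi_2 = |\lambda|^\beta \phi'(t)$, and
\[
\partial_x\Phi_2 \;=\; |\lambda|^\beta\, \beta\, x^{\beta-1}\,\bigl[\phi(t) - t\phi'(t)\bigr].
\]
The first three are immediately bounded on $\mathcal{H}_\delta$ because $\phi$ bi-Lipschitz forces $|\phi'|$ to be globally bounded. In the fourth, $x^{\beta-1}\leq \delta^{\beta-1}$ since $\beta > 1$ and $x < \delta$, so the whole problem collapses to proving that the function $t\mapsto \phi(t) - t\phi'(t)$ is bounded on $\R$.

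This boundedness is the main obstacle, and it is exactly where the polynomial identity $g\circ\phi = f$ (not just bi-Lipschitz regularity) enters. Differentiating the identity gives $g'(\phi(t))\phi'(t) = f'(t)$, so wherever $g'(\phi(t))\neq 0$ (which excludes only finitely many $t$, since $g'$ has degree $d-1$ and $\phi$ is bijective),
\[
\phi(t) - t\phi'(t) \;=\; \frac{\bigl[\phi(t)\,g'(\phi(t)) - d\,g(\phi(t))\bigr] \,+\, \bigl[d\,f(t) - t\,f'(t)\bigr]}{g'(\phi(t))},
\]
where I used $d\,g(\phi(t)) = d\,f(t)$ to split the numerator. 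A brief inspection shows that both bracketed expressions are polynomials of degree at most $d-1$ (in $\phi(t)$ and in $t$ respectively). Since $\phi(t) = O(t)$ at infinity (bi-Lipschitz), the numerator is $O(|t|^{d-1})$. On the other hand, $g'(\phi(t))/t^{d-1} \to d\,b_d\,l^{d-1} \neq 0$ as $|t|\to\infty$, where $l = \lim_{|t|\to\infty}\phi(t)/t$ and $b_d$ is the leading coefficient of $g$; hence $|g'(\phi(t))|$ grows exactly like $|t|^{d-1}$. The ratio is therefore bounded outside a compact set, and analyticity of $\phi(t) - t\phi'(t)$ on all of $\R$ handles the compact part, yielding a uniform bound on $\R$.

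All four partial derivatives of $\Phi$ are therefore bounded on $\mathcal{H}_\delta$ by constants depending only on $\lambda, \beta, \delta$ and the Lipschitz constants of $\phi$. Convexity of $\mathcal{H}_\delta$ together with the mean value inequality then gives the desired Lipschitz bound. The hard part of the argument is unquestionably the boundedness of $\phi(t) - t\phi'(t)$: for a general bi-Lipschitz $\phi$ this quantity can be unbounded, so the algebraic identity $g\circ\phi = f$ is used in a genuinely essential way.
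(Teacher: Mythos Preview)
Your argument is correct. Both your proof and the paper's hinge on the same crucial fact---the boundedness of $\phi(t) - t\phi'(t)$---but the two proofs handle it, and deploy it, differently.

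The paper works directly with differences: it derives the identity
\[
\phi(t_2)x_2^\beta - \phi(t_1)x_1^\beta
= \bigl(\phi(\tau) - \tau\phi'(\tau)\bigr)\,\beta\omega^{\beta-1}(x_2-x_1)
+ \frac{\phi(t_2)-\phi(t_1)}{t_2-t_1}\,(t_2x_2^\beta - t_1x_1^\beta)
\]
via Pompeiu's mean value theorem (which needs $t_1,t_2$ of the same sign, hence the split into upper and lower half-strips, glued afterwards through the $x$-axis). The boundedness of $\phi(t)-t\phi'(t)$ itself is proved in a separate section by showing that $\psi(s)\coloneqq s\phi(1/s)$ extends analytically across $s=0$ (via the implicit function theorem applied to the homogenization of $g(Y)-f(X)$), and then noting $\phi(t)-t\phi'(t)=\psi'(1/t)$.

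Your route is more streamlined: you bound the Jacobian directly on the convex strip $\mathcal{H}_\delta$ and invoke the mean value inequality, so no half-strip splitting or Pompeiu theorem is needed. Your proof of the boundedness of $\phi(t)-t\phi'(t)$ is also more elementary---a bare-hands growth estimate using $g\circ\phi=f$ and $g'(\phi)\phi'=f'$ rather than analyticity at infinity. Both arguments extract the same content from the polynomial identity; yours trades the implicit-function-theorem machinery for a direct asymptotic comparison, at the cost of having to track degrees carefully. Either way, as you observe, this step is exactly where the hypothesis $g\circ\phi=f$ is indispensable.
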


\begin{proof}
Let \(\delta > 0\) be fixed. We prove that \(\Phi\) is Lipschitz on both the upper half-strip \({\cal H}_\delta\cap \{y > 0\}\) and the 
lower half-strip \({\cal H}_\delta\cap \{y < 0\}\). Let us see that this implies the result. Assuming this claim, and using the fact that \(\Phi\) is continuous, we see that there exists a constant \(C > 0\) such that
\begin{equation}
\label{eq: Lipschitz condition for Phi on a quadrant}
\abs{\Phi(x_1,y_1) - \Phi(x_2,y_2)} \leq C\abs{(x_1,y_1) - (x_2,y_2)},
\end{equation} 
whenever \((x_1,y_1)\) and \((x_2,y_2)\) both belong to 
\({\cal H}_\delta\cap\{y\geq 0\}\) or to \({\cal H}_\delta\cap\{y\leq 0\}\).
We show that (\ref{eq: Lipschitz condition for Phi on a quadrant}) still holds for \((x_1,y_1)\in{\cal H}_\delta\cap\{y\geq 0\}\) 
and \((x_2,y_2)\in{\cal H}_\delta\cap\{y\leq 0\}\). 
Let \((\bar{x},0)\) be the point at which the line segment whose endpoints are \((x_1,y_1)\) and \((x_2,y_2)\) intersects the \(x\)-axis. By our assumptions, we have:
\begin{equation*}
\abs{\Phi(x_1,y_1) - \Phi(\bar{x},0)} \leq C\abs{(x_1,y_1) - (\bar{x},0)}
\end{equation*} 
and
\begin{equation*}
\abs{\Phi(\bar{x},0) - \Phi(x_2,y_2) } \leq C\abs{(\bar{x},0) - (x_2,y_2)}.
\end{equation*} 
Hence,
\begin{align*}
\abs{\Phi(x_1,y_1) - \Phi(x_2,y_2)} &\leq 
\abs{\Phi(x_1,y_1) - \Phi(\bar{x},0)} + \abs{\Phi(\bar{x},0) - \Phi(x_2,y_2)}\\
&\leq C\left(\abs{(x_1,y_1) - (\bar{x},0)} + \abs{(\bar{x},0) - (x_2,y_2)}\right)\\
&=C\abs{(x_1,y_1) - (x_2,y_2)}, 
\end{align*}
where the last equality holds because the point \((\bar{x},0)\) lies in the segment whose endpoints are \((x_1,y_1)\) and \((x_2,y_2)\). Therefore, our initial claim implies that \(\Phi\) is Lipschitz on the strip 
\({\cal H}_\delta\).

In order to establish our initial claim, we first show that for each fixed pair of points \((x_1,t_1x_1^\beta)\) and \((x_2,t_2x_2^\beta)\), either both on 
\({\cal H}_\delta\cap\{y > 0\}\) or both on \({\cal H}_\delta\cap\{y < 0\}\),
with \(x_1\neq x_2\)\/ and\/ \(t_1\neq t_2\),
there exist \(\omega\) between \(x_1\) and \(x_2\), and \(\tau\) between \(t_1\) and \(t_2\) such that 
\begin{equation}
\label{eq: algebraic trick}
\phi(t_2)x_2^\beta - \phi(t_1)x_1^\beta = 
(\phi(\tau) - \tau \phi^\prime(\tau))\cdot\beta\omega^{\beta - 1}\cdot(x_2 - x_1) + \frac{\phi(t_2) - \phi(t_1)}{t_2 - t_1}\cdot(t_2x_2^\beta - t_1x_1^\beta)
\end{equation}
In fact, 
\begin{align}
\phi(t_2) x_2^\beta - \phi(t_1) x_1^\beta &= 
(\phi(t_1) + q\cdot (t_2 - t_1)) x_2^\beta - \phi(t_1) x_1^\beta,
\text{ where } q = \frac{\phi(t_2) - \phi(t_1)}{t_2 - t_1}\nonumber\\
&= \phi(t_1)\cdot(x_2^\beta - x_1^\beta) + q\cdot(t_2x_2^\beta - t_1x_2^\beta)\nonumber\\
&= \phi(t_1)\cdot(x_2^\beta - x_1^\beta) + q\cdot(t_2x_2^\beta - t_1x_1^\beta) + q\cdot t_1\cdot(x_1^\beta - x_2^\beta)\nonumber\\
&= (\phi(t_1) - q\cdot t_1)\cdot(x_2^\beta - x_1^\beta) + q\cdot(t_2x_2^\beta - t_1x_1^\beta)\nonumber\\
&= \frac{t_2\phi(t_1) - t_1\phi(t_2)}{t_2 - t_1}\cdot(x_2^\beta - x_1^\beta) + \frac{\phi(t_2) - \phi(t_1)}{t_2 - t_1}\cdot(t_2x_2^\beta - t_1x_1^\beta)
\label{eq: algebraic identity for the second coordinate of the difference}
\end{align}

Since the points \((x_1,t_1x_1^\beta)\) and \((x_2,t_2x_2^\beta)\) are either both on \({\cal H}_\delta\cap\{y > 0\}\) or both on \({\cal H}_\delta\cap\{y < 0\}\), the real numbers \(t_1,t_2\) are either both positive or both negative. Thus, by Pompeiu's Mean Value Theorem \cite[p.~1~--~2]{Drago}, there exists a real number \(\tau\) between \(t_1\) and \(t_2\) such that 
\begin{equation}
\label{eq: Pompeiu's Mean Value Theorem}
\frac{t_2\phi(t_1)-t_1\phi(t_2)}{t_2 - t_1} = 
\phi(\tau) - \tau\phi^\prime(\tau).
\end{equation}

Also, by Lagrange's Mean Value Theorem, there exists a real number 
\(\omega\) between \(x_1\) and \(x_2\) such that
\begin{equation}
\label{eq: Lagrange's Mean Value Theorem}
x_2^\beta - x_1^\beta =\beta\omega^{\beta-1}\cdot (x_2 - x_1).
\end{equation}
Substituting (\ref{eq: Pompeiu's Mean Value Theorem}) and (\ref{eq: Lagrange's Mean Value Theorem}) in (\ref{eq: algebraic identity for the second coordinate of the difference}), we obtain (\ref{eq: algebraic trick}).

Now, since \(\phi\) is Lipschitz, there exists a constant \(C_1 > 0\) such that
\begin{equation*}
\abs{\frac{\phi(t_2) - \phi(t_1)}{t_2 - t_1}} \leq C_1,
\end{equation*}
for \(t_1\neq t_2\). On the other hand, since \(\phi(t)-t\phi^\prime(t)\) is bounded (see next section), there exists a constant \(C_2 > 0\) such that
\begin{equation*}
\abs{(\tau\phi^\prime(\tau) - \phi(\tau))\cdot\beta\omega^{\beta - 1}}
\leq C_2,
\end{equation*}
provided that \(0 < x_1,x_2 < \delta\).

Applying these bounds to (\ref{eq: algebraic trick}), we obtain:
\begin{equation*}
\abs{\phi(t_2) x_2^\beta - \phi(t_1) x_1^\beta} \leq 
C\cdot\left(\abs{x_2 - x_1} + \abs{t_2x_2^\beta - t_1x_1^\beta}\right),
\end{equation*}
where \(C = \max\{C_1,C_2\}\), for any pair of points \((x_1,t_1x_1^\beta)\) and \((x_2,t_2x_2^\beta)\), either both on \({\cal H}_\delta\cap{\{y > 0\}}\) or both on \({\cal H}_\delta\cap\{y < 0\}\), thereby proving our initial claim.
\end{proof}

\begin{corollary}
\label{cor: Lipschitz on the strip |x| < delta}
Let \((\lambda,\phi)\) be a \(\beta\)-transition, and let 
\(\Phi\from\R^2\to\R^2\) be the map defined by:
\begin{itemize}
\item \(\Phi(x,t\abs{x}^\beta) \coloneqq 
\left(\lambda_1 x, \abs{\lambda_1}^\beta \phi_1(t) \abs{x}^\beta\right),\quad
\text{for } x > 0,\, t\in\R\)

\item \(\Phi(x,t\abs{x}^\beta) \coloneqq 
\left(\lambda_2 x, \abs{\lambda_2}^\beta \phi_2(t) \abs{x}^\beta\right),\quad
\text{for } x < 0,\, t\in\R\)

\item \(\displaystyle\Phi(0,y) \coloneqq 
\left(0,\abs{\lambda_1}^\beta\lim_{\abs{t}\to+\infty} \frac{\phi_1(t)}{t}\,y\right)
= 
\left(0,\abs{\lambda_2}^\beta\lim_{\abs{t}\to+\infty} \frac{\phi_2(t)}{t}\,y\right),
\quad\text{for all }y\in\R\)
\end{itemize}
Then, there exist \(\delta > 0\) such that \(\Phi\) is Lipschitz on the strip
\(\{(x,y)\in\R^2 : \abs{x} < \delta\}\).
\end{corollary}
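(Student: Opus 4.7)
The plan is to reduce the statement to two applications of Lemma \ref{lemma: Lipschitz on the strip H_delta, right half-plane}---one on each side of the $y$-axis---and then glue the two resulting Lipschitz pieces across $x=0$ via the triangle inequality.

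First I would apply Lemma \ref{lemma: Lipschitz on the strip H_delta, right half-plane} directly with $\phi = \phi_1$ and $\lambda = \lambda_1$: the bi-Lipschitz hypothesis on $\phi_1$ holds because $\phi_1 \in \mathcal{L}$, and condition $(i)$ in the definition of a $\beta$-transition supplies nonconstant polynomial functions $f_i, g_i$ with $(g_1,g_2)\circ(\lambda,\phi)=(f_1,f_2)$; in either sign case for $\lambda$ this unpacks to a polynomial identity $\tilde g\circ\phi_1 = c f_1$ with both $\tilde g$ and $f_1$ nonconstant. The lemma then yields, for each $\delta>0$, a constant $C_+$ such that $\Phi$ is Lipschitz on $\{0<x<\delta\}$ with constant $C_+$. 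For the left half-strip I would apply the same lemma after reflecting: composing with $\sigma(x,y)\coloneqq(-x,y)$ gives $(\Phi\circ\sigma)(x,tx^\beta)=(-\lambda_2 x,\,\abs{\lambda_2}^\beta\phi_2(t)\,x^\beta)$ for $x>0$, which fits the lemma with $\phi=\phi_2$ and $\lambda=-\lambda_2$; since $\sigma$ is an isometry, $\Phi$ is Lipschitz on $\{-\delta<x<0\}$ with some constant $C_-$.

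Next I would verify continuity of $\Phi$ on the entire strip $\{\abs{x}<\delta\}$; the only delicate points are those on the $y$-axis. For $y\neq 0$ and $x>0$ small, writing $y = t\abs{x}^\beta$ with $t = y/x^\beta$ forces $\abs{t}\to+\infty$ as $x\to 0^+$, and $\phi_1(t)/t\to L_1\coloneqq\lim_{\abs{t}\to+\infty}\phi_1(t)/t$ shows that the second coordinate of $\Phi(x,y)$ tends to $\abs{\lambda_1}^\beta L_1 y$, matching the definition of $\Phi(0,y)$. The analogous computation from the left yields $\abs{\lambda_2}^\beta L_2 y$, and the equality of these two values is precisely condition $(ii)$ in the definition of a $\beta$-transition, which makes $\Phi$ well-defined and continuous across the $y$-axis. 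A standard approximation argument then extends each Lipschitz bound from its open half-strip to the corresponding closed half-strip with the same constant.

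The final step is the gluing. With $C\coloneqq\max(C_+,C_-)$, for $(x_1,y_1)$ with $x_1\geq 0$ and $(x_2,y_2)$ with $x_2\leq 0$ the segment joining them meets the $y$-axis at a single point $(0,\bar y)$ lying between them, so
\begin{equation*}
\abs{\Phi(x_1,y_1)-\Phi(x_2,y_2)}\leq C\abs{(x_1,y_1)-(0,\bar y)}+C\abs{(0,\bar y)-(x_2,y_2)} = C\abs{(x_1,y_1)-(x_2,y_2)},
\end{equation*}
yielding the desired Lipschitz estimate on the whole strip $\{\abs{x}<\delta\}$. The main obstacle I anticipate is essentially bookkeeping: carefully unpacking the $\beta$-transition definition in the two sign regimes for $\lambda$ so that condition $(i)$ produces the polynomial identity required by the lemma in each case, and confirming that condition $(ii)$ really delivers a single continuous value of $\Phi$ on the $y$-axis. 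Once these two points are handled, the rest is the triangle-inequality gluing sketched above.
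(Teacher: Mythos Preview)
Your proposal is correct and follows essentially the same route as the paper: apply Lemma \ref{lemma: Lipschitz on the strip H_delta, right half-plane} on each open half-strip, extend by continuity to the closed half-strips (the paper phrases this as the unique continuous extension of a uniformly continuous map into a complete space, which is your ``standard approximation argument''), and then glue across the $y$-axis. You spell out the reflection trick for the left half-plane and the triangle-inequality gluing more explicitly than the paper does, but the underlying argument is the same.
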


\begin{proof}
By Lemma \ref{lemma: Lipschitz on the strip H_delta, right half-plane}, there exist \(C_+ > 0\) and \(\delta_+ > 0\) such that 
\(\Phi\vert_{{\cal H}_{\delta_+}}\from{\cal H}_{\delta_+}\to\R^2\) is \(C_+\)-Lipschitz. Since \(\Phi\vert_{{\cal H}_{\delta_+}}\) is uniformly continuous and takes values in \(\R^2\) (which is a complete metric space), it has a unique continuous extension \(\widetilde\Phi\) to 
\(\widetilde{{\cal H}}_{\delta_+}\coloneqq \{(x,y)\in\R^2 : 0\leq x < \delta_+\}\). Let us show that \(\widetilde\Phi = \Phi\vert_{\widetilde{{\cal H}}_{\delta_+}}\). Obviously, \(\widetilde{\Phi}(x,y) = \Phi(x,y)\) for all 
\((x,y)\in{\cal H}_{\delta_+}\). And for all \(y\in\R\), we have:
\begin{align*}
\widetilde{\Phi}(0,y) &= \lim_{x\to 0^+}\Phi(x,y)\\
&= \lim_{x\to 0^+}\left(\lambda_1 x, 
\abs{\lambda_1}^\beta\cdot\frac{\phi_1(t)}{t}\cdot y\right), 
\quad\text{where } t = \frac{y}{x^\beta}\\
&= \left(0,
   \abs{\lambda_1}^\beta\cdot\lim_{\abs{t}\to+\infty}\frac{\phi_1(t)}{t}\cdot y\right)\\
&= \Phi(0,y).
\end{align*}
Hence, \(\Phi\vert_{\widetilde{\cal H}_{\delta_+}}\) is the continuous extension of \(\Phi\vert_{{\cal H}_{\delta_+}}\) to \(\widetilde{\cal H}_{\delta_+}\). And since \(\Phi\vert_{{\cal H}_{\delta_+}}\) is \(C_+\)-Lipschitz, it follows that 
\(\Phi\vert_{\widetilde{\cal H}_{\delta_+}}\) is \(C_+\)-Lipschitz too.

Similarly, we can prove that there exist \(C_- > 0\) and \(\delta_- > 0\) such that \(\Phi\vert_{-\widetilde{\cal H}_{\delta_-}}\from 
-\widetilde{\cal H}_{\delta_-}\to\R^2\) is \(C_-\)-Lipschitz. Therefore, 
\(\Phi\) is \(C\)-Lipschitz on the strip \(\{(x,y)\in\R^2 : \abs{x} < \delta\}\), where \(C = \max\{C_+,C_-\}\) and \(\delta = \min\{\delta_+,\delta_-\}\).
\end{proof}

\begin{corollary}
The inverse \(\beta\)-transform \(\Phi\from(\R^2,0)\to(\R^2,0)\) of every \(\beta\)-transition \((\lambda,\phi)\) is a germ of semialgebraic bi-Lipschitz map.
\end{corollary}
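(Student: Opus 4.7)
The plan is to obtain the bi-Lipschitz property by applying Corollary \ref{cor: Lipschitz on the strip |x| < delta} to both \((\lambda,\phi)\) and its inverse \((\lambda,\phi)^{-1}\), and then combining the two resulting Lipschitz estimates via Corollary \ref{cor: Inverse beta-transform of the inverse of a beta-transition}.

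First I would apply Corollary \ref{cor: Lipschitz on the strip |x| < delta} directly to the \(\beta\)-transition \((\lambda,\phi)\) to produce \(\delta_1 > 0\) and a constant \(C_1 > 0\) such that the map \(\Phi\) is \(C_1\)-Lipschitz on the strip \(\{(x,y)\in\R^2 : \abs{x} < \delta_1\}\); in particular, \(\Phi\) is \(C_1\)-Lipschitz on some open ball around the origin, and this ball determines a germ of Lipschitz map at \(0\). Next, I would recall (from the remark preceding the definition of the inverse \(\beta\)-transform) that \((\lambda,\phi)^{-1}\) is itself a \(\beta\)-transition, and apply Corollary \ref{cor: Lipschitz on the strip |x| < delta} to it: this yields \(\delta_2 > 0\) and \(C_2 > 0\) such that the inverse \(\beta\)-transform of \((\lambda,\phi)^{-1}\) is \(C_2\)-Lipschitz on \(\{\abs{x} < \delta_2\}\). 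By Corollary \ref{cor: Inverse beta-transform of the inverse of a beta-transition}, this inverse \(\beta\)-transform is exactly \(\Phi^{-1}\), so \(\Phi^{-1}\) is \(C_2\)-Lipschitz on a neighborhood of the origin as well.

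Then I would glue the two estimates to obtain the bi-Lipschitz property as a germ. Since \(\Phi\) is continuous and \(\Phi(0) = 0\), I can shrink the source neighborhood \(U\subseteq\{\abs{x}<\delta_1\}\) so that \(\Phi(U)\subseteq\{\abs{x}<\delta_2\}\); on such a \(U\) the restriction \(\Phi\vert_U\) is a bijection onto its image (the previous proposition on composition of inverse \(\beta\)-transforms, combined with the fact that the inverse \(\beta\)-transform of the identity \(\beta\)-transition is the identity map, shows that \(\Phi\) and \(\Phi^{-1}\) are indeed set-theoretic inverses), and both \(\Phi\vert_U\) and its inverse are Lipschitz. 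This establishes that \(\Phi\) represents a germ of bi-Lipschitz map at the origin. Finally, I would note that \(\Phi\) is semialgebraic: its piecewise definition involves the Nash diffeomorphisms \(\phi_1,\phi_2\) (which are semialgebraic by hypothesis) and the function \(\abs{x}^\beta\), which is semialgebraic because \(\beta = r/s\) is rational (its graph is cut out by \(y^s = \abs{x}^r\), \(y\geq 0\)); the limit appearing in the \(x=0\) clause is a real constant, so the resulting map is semialgebraic on each of the three pieces \(\{x>0\}\), \(\{x<0\}\), \(\{x=0\}\), and hence globally.

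I do not expect a significant obstacle in this proof: essentially all of the analytic content was already packaged into Lemma \ref{lemma: Lipschitz on the strip H_delta, right half-plane} and Corollary \ref{cor: Lipschitz on the strip |x| < delta} (via Pompeiu's Mean Value Theorem and the boundedness of \(\phi(t)-t\phi'(t)\)), and into the structural fact that the inverse of a \(\beta\)-transition is again a \(\beta\)-transition. The one point that requires a little care—but no real difficulty—is the passage from ``Lipschitz on a horizontal strip'' to ``bi-Lipschitz germ at \(0\)'', which requires choosing the source neighborhood \(U\) small enough that \(\Phi(U)\) lies in the strip on which \(\Phi^{-1}\) is known to be Lipschitz.
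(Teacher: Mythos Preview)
Your proposal is correct and follows essentially the same approach as the paper: apply Corollary~\ref{cor: Lipschitz on the strip |x| < delta} to both \((\lambda,\phi)\) and \((\lambda,\phi)^{-1}\), invoke Corollary~\ref{cor: Inverse beta-transform of the inverse of a beta-transition} to identify the latter inverse \(\beta\)-transform with \(\Phi^{-1}\), and observe semialgebraicity from the definition. The paper's version is terser and omits the neighborhood-shrinking detail you spell out, but the argument is the same.
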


\begin{proof}
Since \(\phi_1\) and \(\phi_2\) are both semialgebraic functions, it is immediate from the definition of the inverse \(\beta\)-transform that \(\Phi\) is a germ of semialgebraic map. And by Corollary \ref{cor: Lipschitz on the strip |x| < delta}, both \(\Phi\) and \(\Phi^{-1}\) (which is the inverse 
\(\beta\)-transform of \((\lambda,\phi)^{-1}\), by Corollary \ref{cor: Inverse beta-transform of the inverse of a beta-transition}) are germs of Lipschitz maps. Hence the result.
\end{proof}

\begin{proposition}
Let \(F, G\in\R[X,Y]\) be \(\beta\)-quasihomogeneous polynomials of degree \(d\geq 1\), and let \(f_+,f_-\) be the height functions of \(F\) and \(g_+,g_-\) the height functions of \(G\). Suppose that 
\((g_+,g_-)\circ(\lambda,\phi) = (f_+,f_-)\)
for some \(\beta\)-transition \((\lambda,\phi)\).
Then, \(G\circ \Phi = F\), where \(\Phi\) is the inverse \(\beta\)-transform of \((\lambda,\phi)\).
\end{proposition}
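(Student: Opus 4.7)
The plan is to verify the identity $G\circ\Phi = F$ pointwise by a direct case analysis, exploiting the fact that every point with $x\neq 0$ can be uniquely written as $(x,t\abs{x}^\beta)$ for some $t\in\R$, so that the $\beta$-quasihomogeneity of $G$ turns the evaluation into the one-variable identity that we are given as hypothesis.

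First I would handle the case $x>0$. From the definition of $\Phi$, $\Phi(x,tx^\beta) = (\lambda_1 x,\, \abs{\lambda_1}^\beta\phi_1(t)\,x^\beta)$. Applying $\beta$-quasihomogeneity of $G$ with positive scaling factor $\abs{\lambda_1}\,x$, one gets
\begin{equation*}
G\bigl(\lambda_1 x,\, \abs{\lambda_1}^\beta\phi_1(t)\,x^\beta\bigr)
= (\abs{\lambda_1}x)^d\,G\bigl(\mathrm{sgn}(\lambda_1),\,\phi_1(t)\bigr)
= \abs{\lambda_1}^d\,x^d\cdot
\begin{cases} g_+(\phi_1(t)),& \lambda_1>0\\ g_-(\phi_1(t)),& \lambda_1<0\end{cases}.
\end{equation*}
Now I invoke the hypothesis $(g_+,g_-)\circ(\lambda,\phi)=(f_+,f_-)$ as spelled out by the definition of the action of the group of proto-transitions: in the direct case ($\lambda>0$), $\abs{\lambda_1}^d g_+\circ\phi_1 = f_+$; in the reverse case ($\lambda<0$), $\abs{\lambda_1}^d g_-\circ\phi_1 = f_+$. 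Either way the right-hand side becomes $x^d f_+(t) = F(x,tx^\beta)$, where the last equality is just $\beta$-quasihomogeneity of $F$.

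The case $x<0$ is entirely analogous, writing $\Phi(x,t\abs{x}^\beta) = (\lambda_2 x,\, \abs{\lambda_2}^\beta\phi_2(t)\,\abs{x}^\beta)$, using $\beta$-quasihomogeneity of $G$ with scaling factor $\abs{\lambda_2}\abs{x}$, and noting that the sign of $\lambda_2$ coincides with that of $\lambda_1$ (since $(\lambda_1,\lambda_2)\in H$), so that the direct/reverse dichotomy matches the second component of the hypothesis: $\abs{\lambda_2}^d g_-\circ\phi_2 = f_-$ when $\lambda>0$ and $\abs{\lambda_2}^d g_+\circ\phi_2 = f_-$ when $\lambda<0$. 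The computation then reduces to $\abs{x}^d f_-(t) = F(x,t\abs{x}^\beta)$.

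Finally, on the line $\{x=0\}$ the identity follows by continuity: $\Phi$ is continuous by Corollary \ref{cor: Lipschitz on the strip |x| < delta}, $F$ and $G$ are polynomials, and the set $\{x\neq 0\}$ is dense, so the equality $G\circ\Phi=F$ propagates to the whole neighborhood of the origin. The only real care needed throughout is bookkeeping of signs when $\lambda_1,\lambda_2$ are negative; this is where $\beta$-quasihomogeneity, which a priori is a statement about positive scalings, has to be applied to the modulus $\abs{\lambda_i}\abs{x}$ and the sign is absorbed into the choice between $g_+$ and $g_-$. There is no substantive obstacle beyond this case-by-case verification.
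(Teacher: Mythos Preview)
Your proof is correct and follows essentially the same approach as the paper: a case analysis on the sign of $x$ combined with $\beta$-quasihomogeneity to reduce to the one-variable identities encoded in the hypothesis $(g_+,g_-)\circ(\lambda,\phi)=(f_+,f_-)$, followed by continuity (via Corollary~\ref{cor: Lipschitz on the strip |x| < delta}) to extend across $\{x=0\}$. The only cosmetic difference is that the paper first establishes the normalized identities $G(\lambda_1,\abs{\lambda_1}^\beta\phi_1(t))=F(1,t)$ and $G(-\lambda_2,\abs{\lambda_2}^\beta\phi_2(t))=F(-1,t)$ and then scales, whereas you scale directly; the content is the same.
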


\begin{proof}
First, we show that
\begin{equation}
\label{eq: preliminar relation between F and G}
G(\lambda_1, \abs{\lambda_1}^\beta\phi_1(t)) = F(1,t) 
\quad\text{and}\quad
G(-\lambda_2, \abs{\lambda_2}^\beta\phi_2(t)) = F(-1,t).
\end{equation}
We consider separately the cases \(\lambda > 0\) and \(\lambda < 0\).

If \(\lambda > 0\), we have
\begin{equation*}
\abs{\lambda_1}^d\cdot g_+\circ\phi_1 = f_+ 
\quad\text{and}\quad
\abs{\lambda_2}^d\cdot g_-\circ\phi_2 = f_-.
\end{equation*}
Equivalently,
\begin{equation*}
\abs{\lambda_1}^d\cdot G(1, \phi_1(t)) = F(1,t) 
\quad\text{and}\quad
\abs{\lambda_2}^d\cdot G(-1,\phi_2(t)) = F(-1,t).
\end{equation*}
Thus, using the fact that the polynomials \(F\) and \(G\) are \(\beta\)-quasihomogeneous of degree \(d\), we obtain (\ref{eq: preliminar relation between F and G}).

If \(\lambda < 0\), we have
\begin{equation*}
\abs{\lambda_1}^d\cdot g_-\circ\phi_1 = f_+ 
\quad\text{and}\quad
\abs{\lambda_2}^d\cdot g_+\circ\phi_2 = f_-.
\end{equation*}
Equivalently,
\begin{equation*}
\abs{\lambda_1}^d\cdot G(-1, \phi_1(t)) = F(1,t) 
\quad\text{and}\quad
\abs{\lambda_2}^d\cdot G(1,\phi_2(t)) = F(-1,t).
\end{equation*}
Again, using the fact that the polynomials \(F\) and \(G\) are \(\beta\)-quasihomogeneous of degree \(d\), we obtain (\ref{eq: preliminar relation between F and G}).

Now, by using once more the fact that the polynomials \(F\) and \(G\) are \(\beta\)-quasihomogeneous of degree \(d\), we obtain from (\ref{eq: preliminar relation between F and G}):
\begin{equation*}
G(\lambda_1 x, \abs{\lambda_1}^\beta\phi_1(t)\abs{x}^\beta) = 
F(x,t\abs{x}^\beta)\quad\text{for }x > 0,t\in\R 
\end{equation*}
and
\begin{equation*}
G(\lambda_2 x,\abs{\lambda_2}^\beta\phi_2(t)\abs{x}^\beta) = 
F(x,t\abs{x}^\beta)\quad\text{for }x < 0,t\in\R. 
\end{equation*}
In other words,
\begin{equation*}
G(\Phi(x,y)) = F(x,y)
\quad\text{for all }(x,y)\in\R^2, \text{ with }x\neq 0.
\end{equation*}
Since \(\Phi\) is continuous\footnote{Clearly, \(\Phi\) is continuous on the right half-plane \(\{(x,y)\in\R^2: x > 0\}\) and also on the left half-plane \(\{(x,y)\in\R^2: x < 0\}\). By Corollary \ref{cor: Lipschitz on the strip |x| < delta}, \(\Phi\) is Lipschitz (and therefore continuous) on a strip \(\{(x,y)\in\R^2:\abs{x}<\delta\}\). Since the right half-plane, the left half-plane, and the strip around the \(y\)-axis form an open cover of the plane, it follows that \(\Phi\) is continuous.}, we have
\(G(\Phi(x,y)) = F(x,y)\) for all \((x,y)\in\R^2\). 
\end{proof}

\section{The boundedness of \(\phi(t) - t\phi^\prime(t)\)}

Let \(\phi\from\R\to\R\) be a bi-Lipschitz function such that \(g\circ\phi = f\) for some nonconstant polynomials \(f,g\from\R\to\R\). We already know that \(\phi\) is bi-analytic (Lemma \ref{lemma: bi-Lipschitz iff bi-analytic}). In this section, we prove that the function \(\phi(t) - t\phi^\prime(t)\) is bounded.\footnote{The approach taken here was suggested by Prof. Maria Michalska.}

Consider the function \(\widehat{\phi}\from \R^{*}\cup\{\infty\}\to\R\) defined by
\begin{equation*}
\widehat{\phi}(t)\coloneqq
\begin{cases}
\phi(t)/t,&\text{if }t\in\R^*\\
\lim_{\abs{t}\to+\infty}\phi(t)/t,&\text{if }t=\infty
\end{cases}
\end{equation*}
By Lemma \ref{lemma: immediate consequences Lipschitz equivalence equation}, \(\widehat{\phi}\) is a well-defined continuous function. We prove that \(\widehat{\phi}\) is analytic at \(\infty\). As a consequence, we obtain the boundedness of \(\phi(t) - t\phi^\prime(t)\).

In order to do this, we consider the coordinate representation of 
\(\widehat{\phi}\) on the chart centered at \(\infty\), which is the function 
\(\psi\from\R\to\R\) given by
\begin{equation*}
\psi(t)\coloneqq
\begin{cases}
t\phi(t^{-1}), &\text{if }t\in\R\setminus\{0\}\\
\lim_{\abs{t}\to+\infty}\phi(t)/t,&\text{if } t =0
\end{cases}.
\end{equation*}

\noindent
{\bf Claim.} {\it \(\psi\) is analytic.}
\begin{proof}
Clearly, \(\psi\) is analytic at every point \(t\in\R\setminus\{0\}\), so we focus on proving the analiticity of \(\psi\) at \(t = 0\). 
Let \(P(X,Y)\coloneqq g(Y) - f(X)\) and let \(P^*(X,Y,Z)\) be the homogeneization of \(P\). 
Also, let \(f(t) = \sum_{i=0}^d a_it^i\) and \(g(t) = \sum_{i=0}^d b_it^i\), where 
\(a_d,b_d\neq 0\) and \(d\geq 1\), so
\begin{equation*}
P(X,Y) = \sum_{i=0}^d b_iY^i - \sum_{i=0}^d a_iX^i
\end{equation*}
and
\begin{equation*}
P^*(X,Y,Z) = \sum_{i=0}^d b_iY^iZ^{d-i} - \sum_{i=0}^d a_iX^iZ^{d-i}.
\end{equation*}

Since \(g(\phi(t)) = f(t)\), we have \(P(t,\phi(t)) = 0\) for all \(t\in\R\).
Equivalently, \({P^*(t,\phi(t),1) = 0}\) for all \(t\in\R\).
Since \(P^*\) is homogeneous, it follows that 
\(P^*(1,\phi(t)/t, 1/t) = 0\) for all \(t\in\R\setminus\{0\}\).
Equivalently,
\begin{equation*}
P^*(1,t\phi(t^{-1}), t) = 0 \quad\text{for all } t\in\R\setminus\{0\}.
\end{equation*}

Let \(\widetilde{P}(Y,Z)\coloneqq P^*(1,Y,Z)\). From the computations above, it follows that
\begin{equation*}
\widetilde{P}(\psi(t),t) = 0\quad\text{for all }t\in\R.
\end{equation*}
Now we prove that the equation \(\widetilde{P}(y,z) = 0\) determines \(y\) as an analytic function of \(z\) in a neighborhood of \((l,0)\), 
where \(l\coloneqq \lim_{\abs{t}\to+\infty}\phi(t)/t\). This will give the desired conclusion.

The partial derivatives of \(\widetilde{P}\) at \((y,0)\) are given by
\begin{equation*}
\frac{\partial\widetilde{P}}{\partial y}(y,0) = d\cdot b_d\cdot y^{d-1}
\quad\text{and}\quad
\frac{\partial\widetilde{P}}{\partial z}(y,0) = b_{d-1}\cdot y^{d-1} - a_{d-1}.
\end{equation*}
Since \(\widetilde{P}(l,0) = 0\) and 
\(\frac{\partial\widetilde{P}}{\partial y}(l,0) = d\cdot b_d\cdot l^{d-1}\neq 0\)
(by Lemma \ref{lemma: immediate consequences Lipschitz equivalence equation}, we have \(l\neq 0\)), there is an analytic function 
\(\widetilde{\psi}\from I\to J\) from an open interval \(I\) containing \(0\) to an open interval \(J\) containing \(l\) such that 
\begin{equation*}
\widetilde{P}(y,z) = 0 \iff y = \widetilde{\psi}(z),
\quad\text{for all }y\in J, z\in I.
\end{equation*}
Since \(\psi\) is continuous and \(\psi(0) = l\), there exists an open interval \(I_0\subseteq I\) containing \(0\) such that \(\psi(t)\in J\) for all \(t\in I_0\). And since \(\widetilde{P}(\psi(t),t) = 0\) for all \(t\in I_0\), it follows that 
\(\psi(t) = \widetilde{\psi}(t)\) for all \(t\in I_0\). Hence, \(\psi\) is analytic at \(t = 0\).
\end{proof}

Finally, we can prove that the function \(\phi(t) - t\phi^\prime(t)\) is bounded.
From the definition of \(\psi\), we see that
\begin{equation*}
\frac{\phi(t)}{t} = \psi(t^{-1})\quad\text{for all }t\in\R\setminus\{0\}.
\end{equation*}
Differentiating both sides of this equation, we get
\begin{equation*}
\frac{\phi^\prime(t)\cdot t - \phi(t)}{t^2} = -\frac{\psi^\prime(t^{-1})}{t^2}
\quad\text{for all }t\in\R\setminus\{0\}.
\end{equation*}
Equivalently, we have
\begin{equation*}
\phi(t) - t\phi^\prime(t) = \psi^\prime(t^{-1})
\quad\text{for all }t\in\R\setminus\{0\}.
\end{equation*}
Hence,
\begin{equation*}
\lim_{\abs{t}\to+\infty} \phi(t) - t\phi^\prime(t) = \psi^\prime(0).
\end{equation*}
Since the function \(\phi(t) - t\phi^{\prime}(t)\) is continuous on \(\R\), the existence of this limit implies its boundedness.

\section{A characterization of \(\beta\)-transitions}
\label{section: A characterization of beta-transitions}

Let \(F,G\in\R[X,Y]\) be \(\beta\)-quasihomogeneous polynomials of degree 
\(d\geq 1\), and let \(f_+,f_-\) be the height functions of \(F\) and \(g_+,g_-\) the height functions of \(G\). Suppose that 
\((g_+,g_-)\circ(\lambda,\phi) = (f_+,f_-)\)
for some proto-transition \((\lambda,\phi)\). We are interested in determining
when \((\lambda,\phi)\) can be chosen among the \(\beta\)-transitions.

Denote by \(e_F\) the multiplicity of \(X\) as a factor of \(F\) (in \(\R[X,Y]\)), and by \(e_G\) the multiplicity of \(X\) as a factor of \(G\). Then
\(F(X,Y) = X^{e_F}\cdot\widetilde{F}(X,Y)\) and 
\(G(X,Y) = X^{e_G}\cdot\widetilde{G}(X,Y)\), where
\(X\nmid\widetilde{F}(X,Y)\) and \(X\nmid\widetilde{G}(X,Y)\).

\begin{claim}
\label{claim: e_F = e_G}
\(e_F = e_G\)
\end{claim}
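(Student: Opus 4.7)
The plan is to express both $e_F$ and $e_G$ as affine functions of the degrees of the height functions, and then to read off the equality of those degrees from the proto-transition equation. Write $\beta = r/s$ with $\gcd(r,s)=1$. Any \(\beta\)-quasihomogeneous polynomial of degree $d$ is of the form $\sum_{k=0}^{m'} c_k X^{d-rk} Y^{sk}$ with $c_{m'}\neq 0$; the smallest power of $X$ occurring with nonzero coefficient is $d - rm'$. Thus, if $m_F$ and $m_G$ denote the largest indices appearing in the expansions of $F$ and $G$ respectively, one has $e_F = d - rm_F$ and $e_G = d - rm_G$.

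Evaluating at $X = 1$ and $X = -1$, the leading terms in $t$ of $f_+$ and $f_-$ are $c_{m_F}\, t^{s m_F}$ and $c_{m_F}(-1)^{d-rm_F}\,t^{sm_F}$, with nonzero coefficients in both cases. Hence $\deg f_+ = \deg f_- = sm_F$, and similarly $\deg g_+ = \deg g_- = sm_G$. Combining with the previous paragraph, $e_F = d - \beta \deg f_+$ and $e_G = d - \beta \deg g_+$, so it is enough to prove $\deg f_+ = \deg g_+$.

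Now unpack the hypothesis $(g_+,g_-)\circ(\lambda,\phi) = (f_+,f_-)$ using the definition of the action $\circ$ from Section~\ref{section: proto-transitions}. In both cases $\lambda > 0$ and $\lambda < 0$, the first component gives an equation of the form
\begin{equation*}
f_+ = |\lambda_1|^d\, h\circ\phi_1,
\end{equation*}
where $h\in\{g_+,g_-\}$ and $\phi_1\in\mathcal{L}$ is a bi-Lipschitz Nash diffeomorphism of $\R$. By Lemma~\ref{lemma: immediate consequences Lipschitz equivalence equation}(i), $\deg f_+ = \deg h$; since $\deg g_+ = \deg g_-$, this yields $\deg f_+ = \deg g_+$, and therefore $e_F = e_G$.

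The only subtlety is the degenerate case when $f_+$ happens to be constant, so that Lemma~\ref{lemma: immediate consequences Lipschitz equivalence equation} does not literally apply. In that case the quasihomogeneous expansion forces $F = c_0 X^d$, so $e_F = d$; the identity $|\lambda_1|^d h\circ \phi_1 = f_+$ and the bijectivity of $\phi_1$ force $h$ to be constant, whence $G = c_0' X^d$ and $e_G = d$ as well. No real obstacle is expected; the argument reduces entirely to bookkeeping on the quasihomogeneous expansions plus one application of Lemma~\ref{lemma: immediate consequences Lipschitz equivalence equation}.
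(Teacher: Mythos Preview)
Your proof is correct and follows essentially the same route as the paper: express \(e_F\) and \(e_G\) in terms of the top index in the quasihomogeneous expansion, compute \(\deg f_\pm\) and \(\deg g_\pm\) from those expansions, and then use the proto-transition relation together with Lemma~\ref{lemma: immediate consequences Lipschitz equivalence equation}(i) to equate the degrees. Your explicit treatment of the constant case and the compact identity \(e_F = d - \beta\deg f_+\) are minor expository additions, but the underlying argument is the same.
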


\begin{proof}
Let \(\beta = r/s\), where \(r > s > 0\) and \(\gcd(r,s) = 1\). Since \(F\) and \(G\) are \(\beta\)-quasihomogeneous polynomials of degree \(d\), we have:
\begin{equation*}
F(X,Y) = \sum_{k = 0}^m a_k X^{d - rk}Y^{sk}
\quad\text{and}\quad
G(X,Y) = \sum_{k = 0}^n b_k X^{d - rk}Y^{sk},
\end{equation*}
where \(a_m\neq 0\), \(b_n\neq 0\), and \(0\leq m,n\leq \lfloor d/r\rfloor\).
Then, \(e_F = d-rm\) and \(e_G = d-rn\).

We prove that \(m = n\); by the equations above, this implies that \(e_F = e_G\). Since \(f_+(t) = \sum_{k = 0}^m a_k t^{sk}\) and  
\(f_-(t) = \sum_{k = 0}^m (-1)^{d-rk}a_k t^{sk}\), we have 
\(\deg f_+ = \deg f_- = sm\); and since \(g_+(t) = \sum_{k = 0}^n b_k t^{sk}\) and  \(g_-(t) = \sum_{k = 0}^n (-1)^{d-rk}b_k t^{sk}\), we have 
\(\deg g_+ = \deg g_- = sn\). Thus, since \(f_+\) and \(f_-\) are Lipschitz equivalent to \(g_+\) and \(g_-\) in some order 
(by assumption, \((g_+,g_-)\circ(\lambda,\phi) = (f_+,f_-)\) for some proto-transition \((\lambda,\phi)\)), it follows that \(sm = sn\). Therefore, \(m = n\).
\end{proof}

From now on, we drop the subscript and denote simply by \(e\) the multiplicity  of \(X\) as a factor of either \(F\) or \(G\).

\begin{claim} 
\label{claim: F tilde and G tilde beta-quasihomogeneous of degree d-e}
For all \(t > 0\), we have:
\begin{equation*}
\widetilde{F}(tX,t^\beta Y) = t^{d-e}\widetilde{F}(X,Y)
\quad\text{and}\quad
\widetilde{G}(tX,t^\beta Y) = t^{d-e}\widetilde{G}(X,Y)
\end{equation*}
\end{claim}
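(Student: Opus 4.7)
The plan is to derive the quasihomogeneity of $\widetilde{F}$ and $\widetilde{G}$ directly from the quasihomogeneity of $F$ and $G$ by factoring out the $X^e$ part. Since $F$ is $\beta$-quasihomogeneous of degree $d$, we have the identity $F(tX,t^\beta Y) = t^d F(X,Y)$ for every $t > 0$. Writing $F = X^e \widetilde{F}$ and substituting, this becomes $(tX)^e \widetilde{F}(tX,t^\beta Y) = t^d X^e \widetilde{F}(X,Y)$, i.e.\ $t^e X^e \widetilde{F}(tX,t^\beta Y) = t^d X^e \widetilde{F}(X,Y)$.

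From here, for each fixed $t > 0$ the polynomials $X^e \widetilde{F}(tX,t^\beta Y)$ and $t^{d-e} X^e \widetilde{F}(X,Y)$ (in the variables $X$, $Y$) coincide on the Zariski-dense open set $\{X \neq 0\}$, so they coincide as polynomials. Since $\R[X,Y]$ is an integral domain, I can cancel the common factor $X^e$ and conclude that
\begin{equation*}
\widetilde{F}(tX,t^\beta Y) = t^{d-e}\widetilde{F}(X,Y)
\end{equation*}
as an identity in $\R[X,Y]$. The argument for $\widetilde{G}$ is literally the same, starting from $G(tX,t^\beta Y) = t^d G(X,Y)$ and $G = X^e \widetilde{G}$ (using Claim \ref{claim: e_F = e_G} to ensure the exponent of $X$ factored out of $G$ is the same $e$).

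There is no real obstacle here; the only thing to be careful about is the cancellation of $X^e$, which should be justified as a polynomial identity rather than by dividing pointwise (this is why the statement is valid on all of $\R^2$, not just where $X \neq 0$). Note that although the claim is phrased for $t > 0$, the identity actually extends to all $t > 0$ by the same argument since $t^\beta$ is well-defined precisely there.
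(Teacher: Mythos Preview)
Your proof is correct and follows essentially the same route as the paper: both arguments start from $F(tX,t^\beta Y)=t^dF(X,Y)$, substitute $F=X^e\widetilde{F}$, and cancel $t^eX^e$ to obtain the result, with the same remark that the argument for $\widetilde{G}$ is identical. Your explicit justification of the cancellation of $X^e$ via the integral domain property is a welcome bit of extra care (the paper simply cancels without comment), and your final sentence about extending to all $t>0$ is redundant since that is exactly the stated range.
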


\begin{proof}
Since \(F\) is a \(\beta\)-quasihomogeneous polynomial of degree \(d\),
\begin{align*}
F(tX,t^\beta Y) &= t^d F(X,Y)\\
&= t^d X^e \widetilde{F}(X,Y).
\end{align*}
On the other hand, since the multiplicity of \(X\) as a factor of \(F\) is equal to \(e\), 
\begin{align*}
F(tX,t^\beta Y) &= (tX)^e \widetilde{F}(tX, t^\beta Y)\\
&= t^e X^e \widetilde{F}(tX,t^\beta Y).
\end{align*}
Hence,
\begin{equation*}
t^e X^e \widetilde{F}(tX,t^\beta Y) = t^d X^e \widetilde{F}(X,Y).
\end{equation*}
Therefore,
\begin{equation*}
\widetilde{F}(tX,t^\beta Y) = t^{d - e} \widetilde{F}(X,Y).
\end{equation*}

Obviously, the deduction above with \(F\) replaced by \(G\) yields the other equation.
\end{proof}

Let us briefly consider the case where \(e = d\). In this case, we have 
\(F = aX^d\) and \(G = bX^d\), with \(a,b\neq 0\). 
The next proposition tells us how to determine whether any two such polynomials are \({\cal R}\)-semialgebraically Lipschitz equivalent.

\begin{proposition}
Let \(F(X,Y) = aX^d\) and \(G(X,Y) = bX^d\), where \(a,b\in\R\setminus\{0\}\) and \(d\geq 1\).
\begin{enumerate}[label = \roman*.]
\item If \(d\) is even, then \(F\) and \(G\) are \({\cal R}\)-semialgebraically Lipschitz equivalent if and only if \(a\) and \(b\) have the same sign.
\item If \(d\) is odd, then \(F\) and \(G\) are \({\cal R}\)-semialgebraically Lipschitz equivalent.
\end{enumerate}
\end{proposition}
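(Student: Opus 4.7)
The plan is to handle both parts by exhibiting an explicit linear $\Phi$ in the affirmative cases, and by a range-comparison argument for the negative direction of (i).

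For the sufficiency statements (both the "same sign" case of (i) and all of (ii)), I would set $c \coloneqq (a/b)^{1/d}$. This is a well-defined nonzero real number whenever either $d$ is odd, or $d$ is even and $a/b > 0$, so it covers exactly the cases in which equivalence is claimed. Then I take $\Phi(x,y) \coloneqq (cx, y)$. This is a linear isomorphism of $\R^2$, hence a (global) semialgebraic bi-Lipschitz homeomorphism fixing the origin, and a one-line computation gives
\begin{equation*}
G(\Phi(x,y)) = b(cx)^d = bc^d x^d = a x^d = F(x,y),
\end{equation*}
so its germ at $0$ witnesses $\mathcal{R}$-semialgebraic Lipschitz equivalence of $F$ and $G$.

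For the necessity half of (i), I would suppose $d$ is even and that there is a germ of semialgebraic bi-Lipschitz map $\Phi\from (\R^2,0)\to(\R^2,0)$ with $G\circ\Phi = F$. Because $d$ is even, $x^d\geq 0$ for all $x\in\R$, so on a neighborhood of $0$ the values of $F$ all have the same sign as $a$ (together with $0$) and the values of $G$ all have the same sign as $b$ (together with $0$). The relation $G\circ\Phi = F$ forces the local image of $F$ to be contained in the local image of $G$; if $a$ and $b$ had opposite signs, this containment would restrict the image of $F$ to $\{0\}$, contradicting the fact that $F(x,0) = ax^d$ takes nonzero values arbitrarily close to the origin. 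Hence $a$ and $b$ must have the same sign, completing (i).

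I do not foresee any real obstacle: the only point that requires a moment's care is checking that the cases in which $c = (a/b)^{1/d}$ is a genuine real number line up exactly with the cases in which equivalence holds, which is why the dichotomy between $d$ even and $d$ odd is precisely what the statement records.
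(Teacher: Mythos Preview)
Your proof is correct and essentially matches the paper's: both exhibit the linear map $\Phi(x,y) = ((a/b)^{1/d}x,\,y)$ for the affirmative directions, and both use the nonnegativity of even powers for the necessity in (i). The only cosmetic difference is that the paper reads off the sign constraint directly from the scalar identity $b\,\Phi_1(x,y)^d = a x^d$, whereas you phrase it as a comparison of the ranges of $F$ and $G$; these are the same observation.
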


\begin{proof}
(i) Suppose that \(d\) is even. If there exists a germ of semialgebraic bi-Lipschitz homeomorphism \(\Phi\from(\R^2,0)\to(\R^2,0)\) such that 
\(G\circ\Phi = F\) then \(b\cdot\Phi_1(x,y)^d = ax^d\) in a neighborhood of the origin, which implies that \(a\) and \(b\) have the same sign, since \(d\) is even. Now, assuming that \(a\) and \(b\) have the same sign, we have 
\(G\circ\Phi = F\), \(\Phi(x,y) = \left(\abs{\frac{a}{b}}^{\frac{1}{d}}\cdot x, y\right)\).

(ii) If \(d\) is odd, then \(G\circ\Phi = F\), 
\(\Phi(x,y) = \left(\left(\frac{a}{b}\right)^{\frac{1}{d}}\cdot x, y\right)\).
\end{proof}

Since our goal is ultimately to determine whether any two given \(\beta\)-quasihomogeneous polynomials are \({\cal R}\)-semialgebraically Lipschitz equivalent, from now on we focus on the case where \(e < d\).

\begin{claim}
\label{claim: limit condition, for e < d}
If \(e < d\), then 
\begin{equation*}
\abs{\lambda_1}^{\frac{d\beta}{d - e}}\cdot
   \lim_{\abs{t}\to+\infty} \abs{\frac{\phi_1(t)}{t}} = 
\abs{\lambda_2}^{\frac{d\beta}{d - e}}\cdot
   \lim_{\abs{t}\to+\infty} \abs{\frac{\phi_2(t)}{t}}.
\end{equation*}
\end{claim}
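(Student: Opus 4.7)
The plan is to exploit the equation $(g_+,g_-) \circ (\lambda,\phi) = (f_+,f_-)$, which, unpacking the definition of $\circ$, says that for each $i \in \{1,2\}$ one has a relation of the form $\abs{\lambda_i}^d\, g_\epsilon \circ \phi_i = f_i$, where $f_1 = f_+$, $f_2 = f_-$, and $\epsilon \in \{+,-\}$ depends on the sign case of $\lambda$. Since we only care about the leading asymptotic behavior at infinity, the precise matching of signs will wash out once we pass to absolute values.

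First I would write $\beta = r/s$ with $\gcd(r,s)=1$ and use the $\beta$-quasihomogeneous expansions $F(X,Y) = \sum_{k=0}^m a_k X^{d-rk} Y^{sk}$ (with $a_m\neq 0$) and $G(X,Y) = \sum_{k=0}^m b_k X^{d-rk} Y^{sk}$ (with $b_m\neq 0$); the fact that the same $m$ appears in both is Claim \ref{claim: e_F = e_G}. Because $e = d - rm$ and we are assuming $e < d$, we have $m \geq 1$, so each height function is a \emph{nonconstant} polynomial of degree $sm = (d-e)/\beta$, whose leading coefficient is $\pm a_m$ or $\pm b_m$. In particular $\abs{a_m}$ is the common absolute value of the leading coefficients of $f_+$ and $f_-$, and similarly for $\abs{b_m}$.

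Next I would apply Lemma \ref{lemma: immediate consequences Lipschitz equivalence equation} to each bi-Lipschitz map $\phi_i$ (which conjugates the nonconstant polynomial $g_\epsilon$ to a positive multiple of the nonconstant polynomial $f_i$): this guarantees that the limit $L_i \coloneqq \lim_{\abs{t}\to\infty} \abs{\phi_i(t)/t}$ exists and is strictly positive. Dividing the equation $\abs{\lambda_i}^d\, g_\epsilon(\phi_i(t)) = f_i(t)$ by $t^{sm}$, letting $\abs{t}\to\infty$, and taking absolute values collapses all the sign ambiguity and yields
\[
\abs{\lambda_i}^d \cdot \abs{b_m} \cdot L_i^{sm} = \abs{a_m}, \qquad i = 1, 2.
\]

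Equating the $i=1$ and $i=2$ instances gives $\abs{\lambda_1}^d L_1^{sm} = \abs{\lambda_2}^d L_2^{sm}$; extracting the $(sm)$-th root and using $sm = (d-e)/\beta$, so that $d/(sm) = d\beta/(d-e)$, produces precisely the claimed identity. I do not anticipate a serious obstacle: the only delicate point is keeping track of the cases $\lambda > 0$ and $\lambda < 0$ and which of $g_+, g_-$ is paired with which $\phi_i$, but absolute values dissolve that bookkeeping on both sides.
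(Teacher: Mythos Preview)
Your proof is correct and arrives at the same conclusion as the paper, but by a noticeably more direct route. The paper splits into the cases $\lambda>0$ and $\lambda<0$, rewrites each height-function equation in terms of the factored polynomials $\widetilde{F},\widetilde{G}$ (invoking Claim~\ref{claim: F tilde and G tilde beta-quasihomogeneous of degree d-e}), uses their $\beta$-quasihomogeneity of degree $d-e$ to rescale by $t^{-1/\beta}$, and passes to a limit of the form $\widetilde{G}\bigl(0,\,\abs{\lambda_i}^{d\beta/(d-e)}\ell_i\bigr)=\pm\widetilde{F}(0,1)$; only at the end does it use $\widetilde{G}(0,y)=b_n y^{sn}$ to extract the equality of absolute values. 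You bypass this machinery by observing at the outset that the height functions are degree-$sm$ polynomials with leading coefficients $\pm a_m$, $\pm b_m$, so dividing $\abs{\lambda_i}^d g_\epsilon(\phi_i(t))=f_i(t)$ by $t^{sm}$, letting $\abs{t}\to\infty$, and taking absolute values yields $\abs{\lambda_i}^d\abs{b_m}L_i^{sm}=\abs{a_m}$ immediately. Your argument is shorter and does not require Claim~\ref{claim: F tilde and G tilde beta-quasihomogeneous of degree d-e} or the sign-of-$\lambda$ case split; the paper's version, by contrast, tracks the signed limits $\ell_i$ a bit longer, which is not needed for this claim but aligns with the finer sign bookkeeping used downstream in Proposition~\ref{prop: characterization of beta-transitions}.
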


\begin{proof}
Throughout this proof, we assume that \(t > 0\). First, we consider the case where \(\lambda > 0\). In this case, we have
\begin{equation*}
\abs{\lambda_1}^d\cdot g_+\circ\phi_1 = f_+
\quad\text{and}\quad
\abs{\lambda_2}^d\cdot g_-\circ\phi_2 = f_-.
\end{equation*}

Notice that
\begin{align*}
\abs{\lambda_1}^d\cdot g_+(\phi_1(t)) = f_+(t) 
&\implies \abs{\lambda_1}^d\cdot G(1,\phi_1(t)) = F(1,t)\\
&\implies \abs{\lambda_1}^d\cdot \widetilde{G}(1,\phi_1(t)) = \widetilde{F}(1,t)\\
&\implies \abs{\lambda_1}^d\cdot \widetilde{G}\left(t^{-\frac{1}{\beta}},\frac{\phi_1(t)}{t}\right) = \widetilde{F}\left(t^{-\frac{1}{\beta}},1\right).
\end{align*}
In the last implication, we used the fact that 
\(\widetilde{F}\) and \(\widetilde{G}\) are \(\beta\)-quasihomogeneous of the same degree (Claim \ref{claim: F tilde and G tilde beta-quasihomogeneous of degree d-e}). Letting \(t\to+\infty\), we obtain:
\begin{equation*}
\abs{\lambda_1}^d\cdot
\widetilde{G}\left(0,\lim_{\abs{t}\to+\infty} \frac{\phi_1(t)}{t}\right) = 
\widetilde{F}(0,1)
\end{equation*}
Thus, by Claim \ref{claim: F tilde and G tilde beta-quasihomogeneous of degree d-e},
\begin{equation}
\label{eq: (lambda_1)^d G tilde (...) = F tilde (0,1)}
\widetilde{G}\left(0, 
   \abs{\lambda_1}^{\frac{d\beta}{d-e}} \cdot 
      \lim_{\abs{t}\to+\infty} \frac{\phi_1(t)}{t}\right) 
= \widetilde{F}(0,1).
\end{equation}

Similarly,
\begin{align*}
\abs{\lambda_2}^d\cdot g_-(\phi_2(t)) = f_-(t) 
&\implies \abs{\lambda_2}^d\cdot G(-1,\phi_2(t)) = F(-1,t)\\
&\implies \abs{\lambda_2}^d\cdot \widetilde{G}(-1,\phi_2(t)) = \widetilde{F}(-1,t)\\
&\implies \abs{\lambda_2}^d\cdot \widetilde{G}\left(-t^{-\frac{1}{\beta}},\frac{\phi_2(t)}{t}\right) = \widetilde{F}\left(-t^{-\frac{1}{\beta}},1\right).
\end{align*}
Letting \(t\to+\infty\), we obtain:
\begin{equation*}
\abs{\lambda_2}^d\cdot
\widetilde{G}\left(0,\lim_{\abs{t}\to+\infty} \frac{\phi_2(t)}{t}\right) = 
\widetilde{F}(0,1)
\end{equation*}
Thus, by Claim \ref{claim: F tilde and G tilde beta-quasihomogeneous of degree d-e},
\begin{equation}
\label{eq: (lambda_2)^d G tilde (...) = F tilde (0,1)}
\widetilde{G}\left(0, 
   \abs{\lambda_2}^{\frac{d\beta}{d-e}} \cdot 
      \lim_{\abs{t}\to+\infty} \frac{\phi_2(t)}{t}\right) 
= \widetilde{F}(0,1).
\end{equation}

From (\ref{eq: (lambda_1)^d G tilde (...) = F tilde (0,1)}) and 
(\ref{eq: (lambda_2)^d G tilde (...) = F tilde (0,1)}), we obtain:
\begin{equation*}
\widetilde{G}\left(0, 
   \abs{\lambda_1}^{\frac{d\beta}{d-e}} \cdot 
      \lim_{\abs{t}\to+\infty} \frac{\phi_1(t)}{t}\right) 
=
\widetilde{G}\left(0, 
   \abs{\lambda_2}^{\frac{d\beta}{d-e}} \cdot 
      \lim_{\abs{t}\to+\infty} \frac{\phi_2(t)}{t}\right). 
\end{equation*}
Since \(\widetilde{G}(X,Y) = \sum_{k = 0}^nb_kX^{r(n-k)}Y^{sk}\), it follows that
\begin{equation*}
b_n\cdot\left(
   \abs{\lambda_1}^{\frac{d\beta}{d - e}}\cdot
      \lim_{\abs{t}\to+\infty} \frac{\phi_1(t)}{t}
\right)^{sn}
=
b_n\cdot\left(
   \abs{\lambda_2}^{\frac{d\beta}{d - e}}\cdot
      \lim_{\abs{t}\to+\infty} \frac{\phi_2(t)}{t}
\right)^{sn}.
\end{equation*}
Hence,
\begin{equation*}
\abs{\lambda_1}^{\frac{d\beta}{d - e}}\cdot
      \lim_{\abs{t}\to+\infty} \abs{\frac{\phi_1(t)}{t}}
=
\abs{\lambda_2}^{\frac{d\beta}{d - e}}\cdot
      \lim_{\abs{t}\to+\infty} \abs{\frac{\phi_2(t)}{t}}.
\end{equation*}

Now, we consider the case where \(\lambda < 0\). In this case, we have
\begin{equation*}
\abs{\lambda_1}^d\cdot g_-\circ\phi_1 = f_+
\quad\text{and}\quad
\abs{\lambda_2}^d\cdot g_+\circ\phi_2 = f_-.
\end{equation*}

Notice that
\begin{align*}
\abs{\lambda_1}^d\cdot g_-(\phi_1(t)) = f_+(t) 
&\implies \abs{\lambda_1}^d\cdot G(-1,\phi_1(t)) = F(1,t)\\
&\implies \abs{\lambda_1}^d\cdot (-1)^e\cdot \widetilde{G}(-1,\phi_1(t)) = \widetilde{F}(1,t)\\
&\implies \abs{\lambda_+}^d\cdot (-1)^e\cdot \widetilde{G}\left(-t^{-\frac{1}{\beta}},\frac{\phi_1(t)}{t}\right) = \widetilde{F}\left(t^{-\frac{1}{\beta}},1\right).
\end{align*}
Letting \(t\to+\infty\), we obtain:
\begin{equation*}
\abs{\lambda_1}^d\cdot(-1)^e\cdot
\widetilde{G}\left(0,\lim_{\abs{t}\to+\infty} \frac{\phi_1(t)}{t}\right) = 
\widetilde{F}(0,1)
\end{equation*}
Thus, by Claim \ref{claim: F tilde and G tilde beta-quasihomogeneous of degree d-e},
\begin{equation}
\label{eq: (lambda_1)^d G tilde (...) = (-1)^e F tilde (0,1)}
\widetilde{G}\left(0, 
   \abs{\lambda_1}^{\frac{d\beta}{d-e}} \cdot 
      \lim_{\abs{t}\to+\infty} \frac{\phi_1(t)}{t}\right) 
= (-1)^e\cdot\widetilde{F}(0,1).
\end{equation}

Similarly,
\begin{align*}
\abs{\lambda_2}^d\cdot g_+(\phi_2(t)) = f_-(t) 
&\implies \abs{\lambda_2}^d\cdot G(1,\phi_2(t)) = F(-1,t)\\
&\implies \abs{\lambda_2}^d\cdot \widetilde{G}(1,\phi_2(t)) = (-1)^e\cdot\widetilde{F}(-1,t)\\
&\implies \abs{\lambda_2}^d\cdot \widetilde{G}\left(-t^{-\frac{1}{\beta}},\frac{\phi_2(t)}{t}\right) = (-1)^e\cdot\widetilde{F}\left(-t^{-\frac{1}{\beta}},1\right).
\end{align*}
Letting \(t\to+\infty\), we obtain:
\begin{equation*}
\abs{\lambda_2}^d\cdot
\widetilde{G}\left(0,\lim_{\abs{t}\to+\infty} \frac{\phi_2(t)}{t}\right) = 
(-1)^e\cdot\widetilde{F}(0,1)
\end{equation*}
Thus, by Claim \ref{claim: F tilde and G tilde beta-quasihomogeneous of degree d-e},
\begin{equation}
\label{eq: (lambda_2)^d G tilde (...) = (-1)^e F tilde (0,1)}
\widetilde{G}\left(0, 
   \abs{\lambda_2}^{\frac{d\beta}{d-e}} \cdot 
      \lim_{\abs{t}\to+\infty} \frac{\phi_2(t)}{t}\right) 
= (-1)^e\cdot\widetilde{F}(0,1).
\end{equation}

From (\ref{eq: (lambda_1)^d G tilde (...) = (-1)^e F tilde (0,1)}) and 
(\ref{eq: (lambda_2)^d G tilde (...) = (-1)^e F tilde (0,1)}), we obtain:
\begin{equation*}
\widetilde{G}\left(0, 
   \abs{\lambda_1}^{\frac{d\beta}{d-e}} \cdot 
      \lim_{\abs{t}\to+\infty} \frac{\phi_1(t)}{t}\right) 
=
\widetilde{G}\left(0, 
   \abs{\lambda_2}^{\frac{d\beta}{d-e}} \cdot 
      \lim_{\abs{t}\to+\infty} \frac{\phi_2(t)}{t}\right). 
\end{equation*}
Then, it follows that
\begin{equation*}
\abs{\lambda_1}^{\frac{d\beta}{d - e}}\cdot
      \lim_{\abs{t}\to+\infty} \abs{\frac{\phi_1(t)}{t}}
=
\abs{\lambda_2}^{\frac{d\beta}{d - e}}\cdot
      \lim_{\abs{t}\to+\infty} \abs{\frac{\phi_2(t)}{t}}.
\end{equation*}
\end{proof}

\begin{proposition}
\label{prop: characterization of beta-transitions}
Let \(F,G\in\R[X,Y]\) be \(\beta\)-quasihomogeneous polynomials of degree \(d\geq 1\), none of which being of the form \(c X^d\), and let \(f_+,f_-\) be the height functions of \(F\) and \(g_+,g_-\) the height functions of \(G\). Suppose that \((g_+,g_-)\circ(\lambda,\phi) = (f_+,f_-)\) for some proto-transition \((\lambda,\phi)\). Then \((\lambda,\phi)\) is a \(\beta\)-transition if and only if the following conditions hold:
\begin{enumerate}[label=\roman*.]
\item \(\phi_1\) and \(\phi_2\) are {\it coherent} (i.e. they are either both increasing or both decreasing)
\item None of the polynomials \(F,G\) has \(X\) as a factor, or \(\lambda_1 = \lambda_2\).
\end{enumerate}
\end{proposition}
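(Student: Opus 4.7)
The key ingredient is Claim \ref{claim: limit condition, for e < d}, which under our hypothesis $e<d$ (automatic, since neither $F$ nor $G$ has the form $cX^d$) yields the absolute-value identity
\[
|\lambda_1|^{\frac{d\beta}{d-e}}\, L_1 \;=\; |\lambda_2|^{\frac{d\beta}{d-e}}\, L_2,
\]
where $L_i\coloneqq\lim_{|t|\to+\infty}|\phi_i(t)/t|$ and $e$ is the common multiplicity of $X$ as a factor of $F$ and $G$ (Claim \ref{claim: e_F = e_G}). My plan is to compare this with the defining $\beta$-transition equation
\[
|\lambda_1|^{\beta}\, \widetilde L_1 \;=\; |\lambda_2|^{\beta}\, \widetilde L_2,
\]
where $\widetilde L_i\coloneqq\lim_{|t|\to+\infty}\phi_i(t)/t$. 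By Lemma \ref{lemma: immediate consequences Lipschitz equivalence equation} applied to $\phi_i$ (together with either $g_\pm\circ\phi_i = |\lambda_i|^{-d}f_\pm$, depending on the sign of $\lambda$), each $\widetilde L_i$ is a nonzero real number, so $L_i=|\widetilde L_i|>0$ and the sign of $\widetilde L_i$ is $+$ or $-$ according as $\phi_i$ is increasing or decreasing. Hence coherence (i) is equivalent to $\widetilde L_1$ and $\widetilde L_2$ having the same sign.

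With this reformulation in hand, the proof splits according to whether $e=0$ or $e>0$. When $e=0$, the exponent $d\beta/(d-e)$ coincides with $\beta$, so the claim reads $|\lambda_1|^\beta L_1=|\lambda_2|^\beta L_2$ and condition (ii) is vacuous. For the backward direction, coherence lets me factor a common sign $\varepsilon\in\{\pm 1\}$ out of $\widetilde L_i=\varepsilon L_i$ on both sides to recover the $\beta$-transition equation. For the forward direction, the $\beta$-transition equation forces the two sides $|\lambda_i|^\beta\widetilde L_i$ to share a sign, hence so do $\widetilde L_1$ and $\widetilde L_2$.

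When $e>0$, the exponents $\beta$ and $d\beta/(d-e)$ are distinct. For the forward direction, taking absolute values of the $\beta$-transition equation gives $|\lambda_1|^\beta L_1=|\lambda_2|^\beta L_2$; dividing this into the claim's identity yields $(|\lambda_1|/|\lambda_2|)^{\frac{d\beta}{d-e}-\beta}=1$, which since $d\beta/(d-e)\neq\beta$ forces $|\lambda_1|=|\lambda_2|$, and via $\lambda_1\lambda_2>0$ then $\lambda_1=\lambda_2$; this is (ii), and substituting back yields $\widetilde L_1=\widetilde L_2$ and in particular coherence (i). For the backward direction, condition (ii) gives $\lambda_1=\lambda_2$, so the claim collapses to $L_1=L_2$; coherence then upgrades this to $\widetilde L_1=\widetilde L_2$, and the $\beta$-transition equation follows at once.

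The only real obstacle is conceptual: one must not be misled by the exponent $d\beta/(d-e)$ produced by Claim \ref{claim: limit condition, for e < d} and see it as incompatible with the exponent $\beta$ that appears in the definition of a $\beta$-transition. The mismatch is precisely what is exploited in the $e>0$ case to force $\lambda_1=\lambda_2$, whereas in the $e=0$ case the two exponents already agree; once this dichotomy is isolated, all four implications reduce to short manipulations of signs and absolute values.
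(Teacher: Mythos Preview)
Your proposal is correct and follows essentially the same approach as the paper: both arguments hinge on comparing the $\beta$-transition identity $|\lambda_i|^\beta\widetilde L_i$ with the identity $|\lambda_i|^{d\beta/(d-e)}L_i$ supplied by Claim~\ref{claim: limit condition, for e < d}, using that coherence is equivalent to $\widetilde L_1,\widetilde L_2$ sharing a sign. The only difference is presentational: you split explicitly into the cases $e=0$ and $e>0$, whereas the paper treats both at once by deriving the equation $|\lambda_1|^{e\beta/(d-e)}=|\lambda_2|^{e\beta/(d-e)}$ and observing it is equivalent to condition~(ii).
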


\begin{proof}
First, suppose that \((\lambda,\phi)\) is a \(\beta\)-transition. Then, we have \begin{equation}
\label{eq: limit condition for beta-transitions}
\abs{\lambda_1}^\beta\cdot\lim_{\abs{t}\to+\infty}\frac{\phi_1(t)}{t}
=
\abs{\lambda_2}^\beta\cdot\lim_{\abs{t}\to+\infty}\frac{\phi_2(t)}{t}.
\end{equation}
Since \(\abs{\lambda_1} > 0\) and \(\abs{\lambda_2} > 0\), it follows that 
\(\lim_{\abs{t}\to+\infty} \phi_1(t)/t\) and \(\lim_{\abs{t}\to+\infty} \phi_2(t)/t\) have the same sign. And since \(\phi_1\) and \(\phi_2\) are monotonic, this implies that they are coherent. Hence, condition (i) is satisfied.

Since \((g_+,g_-)\circ(\lambda,\phi) = (f_+,f_-)\), where \((\lambda,\phi)\) is a proto-transition, the multiplicity of \(X\) as a factor of \(F\) is equal to the multiplicity of \(X\) as a factor of \(G\) (Claim \ref{claim: e_F = e_G}). Let us denote by \(e\) the multiplicity of \(X\) both as a factor of \(F\) and as a factor of \(G\). Since none of the polynomials \(F,G\) is of the form \(cX^d\), we have \(e < d\). So, by Claim \ref{claim: limit condition, for e < d},
\begin{equation}
\label{eq: limit condition, for e < d}
\abs{\lambda_1}^{\frac{d\beta}{d - e}}
   \lim_{\abs{t}\to+\infty} \abs{\frac{\phi_1(t)}{t}} = 
\abs{\lambda_2}^{\frac{d\beta}{d - e}}
   \lim_{\abs{t}\to+\infty} \abs{\frac{\phi_2(t)}{t}}.
\end{equation}
Since the limits \(\lim_{\abs{t}\to+\infty} \phi_1(t)/t\) and 
\(\lim_{\abs{t}\to+\infty} \phi_2(t)/t\) have the same sign, it follows that
\begin{equation}
\label{eq: limit condition, for e < d - without absolute value}
\abs{\lambda_1}^{\frac{d\beta}{d - e}}
   \lim_{\abs{t}\to+\infty} \frac{\phi_1(t)}{t} = 
\abs{\lambda_2}^{\frac{d\beta}{d - e}}
   \lim_{\abs{t}\to+\infty} \frac{\phi_2(t)}{t}.
\end{equation}

Since the limits \(\lim_{\abs{t}\to+\infty} \phi_1(t)/t\) and 
\(\lim_{\abs{t}\to+\infty} \phi_2(t)/t\) are both nonzero (because \(\phi_1\) and \(\phi_2\) are bi-Lipschitz), it follows from equations 
(\ref{eq: limit condition for beta-transitions}) and (\ref{eq: limit condition, for e < d - without absolute value}) that
\begin{equation}
\label{eq: ratio of powers of abs(lambda)}
\frac{\abs{\lambda_1}^{\frac{d\beta}{d-e}}}{\abs{\lambda_1}^\beta} = 
\frac{\abs{\lambda_2}^{\frac{d\beta}{d-e}}}{\abs{\lambda_2}^\beta}.
\end{equation}
Equivalently,
\begin{equation}
\label{eq: simplified equation powers of lambda}
\abs{\lambda_1}^\frac{e\beta}{d-e} = \abs{\lambda_2}^\frac{e\beta}{d-e}.
\end{equation}
Furthermore, this equality holds if and only if \(e = 0\) or 
\(\abs{\lambda_1} = \abs{\lambda_2}\). And since \(\lambda_1\) and 
\(\lambda_2\) have the same sign, this is equivalent to condition (ii). Since (\ref{eq: simplified equation powers of lambda}) actually holds, condition (ii) is satisfied.

Now, in order to prove the converse, suppose that conditions (i) and (ii) hold. Since \((g_+,g_-)\circ(\lambda,\phi) = (f_+,f_-)\), where \((\lambda,\phi)\) is a proto-transition, (\ref{eq: limit condition, for e < d}) still holds for this part of the argument. Thus, condition (i) implies (\ref{eq: limit condition, for e < d - without absolute value}). On the other hand, as we have just proved, condition (ii) is equivalent to (\ref{eq: ratio of powers of abs(lambda)}). Since we are assuming that condition (ii) is satisfied, (\ref{eq: ratio of powers of abs(lambda)}) holds. From (\ref{eq: limit condition, for e < d - without absolute value}) and (\ref{eq: ratio of powers of abs(lambda)}), we obtain (\ref{eq: limit condition for beta-transitions}). Therefore, \((\lambda,\phi)\) is a \(\beta\)-transition.
\end{proof}

\begin{corollary}
\label{cor: Consequences of the characterization of beta-transitions}
Let \(F,G\in\R[X,Y]\) be \(\beta\)-quasihomogeneous polynomials of degree \(d\geq 1\), none of which being of the form \(c X^d\), and let \(f_+,f_-\) be the height functions of \(F\) and \(g_+,g_-\) the height functions of \(G\). 
Also, let \(\beta = r/s\), where \(r > s > 0\) and \(\gcd(r,s) = 1\).
Suppose that \((g_+,g_-)\circ(\lambda,\phi) = (f_+,f_-)\) for some proto-transition \((\lambda,\phi)\). Then, we have:
\begin{enumerate}[label = (\alph*)]
\item If\/ \(r\) is even or \(s\) is odd, then there exists a \(\beta\)-transition 
\((\tilde{\lambda},\tilde{\phi})\) such that
\((g_+,g_-)\circ(\tilde{\lambda},\tilde{\phi}) = (f_+,f_-)\).
\item If\/ \(s\) is even, then there exists 
\(\tilde{\phi} = (\tilde{\phi}_1,\tilde{\phi}_2)\),
with \(\tilde{\phi}_1\) and \(\tilde{\phi}_2\) coherent, such that
\((g_+,g_-)\circ(\lambda,\tilde{\phi}) = (f_+,f_-)\).
\end{enumerate}
\end{corollary}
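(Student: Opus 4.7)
The plan is to handle the two parts by explicit construction of the needed $(\tilde\lambda,\tilde\phi)$, after first cataloguing the symmetries between $f_\pm$ and $g_\pm$ that come from the parities of $r$ and $s$. Writing $F(X,Y) = \sum_{k=0}^m a_k X^{d-rk}Y^{sk}$ so that $f_+(t) = \sum_k a_k t^{sk}$ and $f_-(t) = \sum_k a_k (-1)^{d-rk} t^{sk}$, I will distinguish three exhaustive, mutually exclusive situations (the case ``$r,s$ both even'' being ruled out by $\gcd(r,s)=1$): (i) $r$ even, $s$ odd, where $f_- = \epsilon f_+$ and $g_- = \epsilon g_+$ with $\epsilon := (-1)^d$; (ii) $r$ odd, $s$ odd, where $f_-(t) = \epsilon f_+(-t)$ and $g_-(t) = \epsilon g_+(-t)$; and (iii) $r$ odd, $s$ even, where every exponent $sk$ is even, so that $f_+, f_-, g_+, g_-$ are all even functions.

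For part (a), only situations (i) and (ii) occur. Setting $\eta := \id$ in case (i) and $\eta := -\id$ in case (ii) unifies them as $g_- = \epsilon\,g_+\circ\eta$ and $f_- = \epsilon\,f_+\circ\eta$. The idea is then to discard $(\lambda_2,\phi_2)$ entirely and build everything from $(\lambda_1,\phi_1)$: set $\tilde\lambda := (\lambda_1,\lambda_1)$, $\tilde\phi_1 := \phi_1$, and $\tilde\phi_2 := \eta\circ\phi_1\circ\eta$. Coherence of $\tilde\phi_1,\tilde\phi_2$ is immediate because conjugation by $\eta=\pm\id$ preserves the monotonicity of a bi-Lipschitz diffeomorphism; together with $\tilde\lambda_1 = \tilde\lambda_2$, Proposition~\ref{prop: characterization of beta-transitions} certifies $(\tilde\lambda,\tilde\phi)$ as a $\beta$-transition. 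What remains is to verify the action identity $(g_+,g_-)\circ(\tilde\lambda,\tilde\phi) = (f_+,f_-)$ in the two sign sub-cases $\lambda>0$ and $\lambda<0$: the first coordinate is trivial since $\tilde\phi_1 = \phi_1$ and $\tilde\lambda_1 = \lambda_1$, while the second coordinate, after expanding via $g_- = \epsilon\,g_+\circ\eta$ and using $\eta\circ\eta=\id$ and $\epsilon^2 = 1$, collapses to the given hypothesis $|\lambda_1|^d g_+\circ\phi_1 = f_+$ (direct case) or $|\lambda_1|^d g_-\circ\phi_1 = f_+$ (reverse case).

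Part (b) corresponds to situation (iii) and is simpler: since $g_+$ and $g_-$ are even, the quantities $g_\pm\circ\phi_2$ are insensitive to replacing $\phi_2$ by $-\phi_2$. I will therefore keep $\tilde\lambda := \lambda$ and $\tilde\phi_1 := \phi_1$, and set $\tilde\phi_2 := \phi_2$ when $\phi_1,\phi_2$ are already coherent or $\tilde\phi_2 := -\phi_2$ otherwise; the required identity then follows at once. The only real obstacle in the whole argument is the bookkeeping across the four-way case split (direct vs.\ reverse $\lambda$, combined with sub-cases (i) vs.\ (ii) in part (a)), but each individual verification boils down to a single substitution once $\eta$ and $\epsilon$ are in place.
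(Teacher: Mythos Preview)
Your proposal is correct and follows essentially the same approach as the paper's own proof: both build \((\tilde\lambda,\tilde\phi)\) in part~(a) by setting \(\tilde\lambda=(\lambda_1,\lambda_1)\) and \(\tilde\phi=(\phi_1,\phi_1)\) when \(r\) is even, respectively \(\tilde\phi=(\phi_1,\,t\mapsto -\phi_1(-t))\) when \(r,s\) are both odd, and in part~(b) both flip the sign of \(\phi_2\) if needed using evenness of \(g_\pm\). Your introduction of \(\eta\in\{\id,-\id\}\) and \(\epsilon=(-1)^d\) is a neat notational device that unifies the two sub-cases of~(a) into a single computation, but the underlying construction and the appeal to Proposition~\ref{prop: characterization of beta-transitions} are identical to the paper's.
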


\begin{proof}
Let \(F(X,Y) = \sum_{k=0}^n a_kX^{d-rk}Y^{sk}\) and 
\(G(X,Y) = \sum_{k=0}^n b_k X^{d-rk}Y^{sk}\), with \(a_n,b_n\neq 0\), 
\(n\geq 1\). (In the proof of Claim \ref{claim: e_F = e_G} we showed that the upper limit of summation \(n\) is the same for \(F\) and \(G\), provided that 
\((g_+,g_-)\circ(\lambda,\phi) = (f_+,f_-)\) for some proto-transition 
\((\lambda,\phi)\). Also, we have \(n\geq 1\) because none of the polynomials \(F,G\) is of the form \(cX^d\).) Now, we proceed to the proof of items (a) and (b).
\begin{enumerate}[label=(\alph*)]
\item {\bf Case 1.} \(r\) is even

In this case, we have
\(f_-(t) = (-1)^d\cdot f_+(t)\) and \(g_-(t) = (-1)^d\cdot g_+(t)\).
In fact, 
\begin{equation*}
f_-(t) = \sum_{k = 0}^n a_k\cdot(-1)^{d-rk}\cdot t^{sk} 
= (-1)^d\cdot\sum_{k=0}^n a_kt^{sk} = (-1)^d\cdot f_+(t).
\end{equation*}
The same reasoning, with \(f\) replaced by \(g\), gives the other equation.

By hypothesis, there exists a proto-transition \((\lambda,\phi)\) such that 
\((g_+,g_-)\circ(\lambda,\phi) = (f_+,f_-)\). We claim that 
\((g_+,g_-)\circ(\tilde\lambda,\tilde\phi) = (f_+,f_-)\), where 
\(\tilde\lambda = (\lambda_1, \lambda_1)\) and \(\tilde\phi = (\phi_1,\phi_1)\).
In fact, if \(\lambda > 0\) then \(\abs{\lambda_1}^d\cdot g_+\circ\phi_1 = f_+\) and hence 
\begin{equation*}
\abs{\lambda_1}^d\cdot g_-(\phi_1(t))  
= (-1)^d\cdot \abs{\lambda_1}^d\cdot g_+(\phi_1(t))
= (-1)^d\cdot f_+(t)
= f_-(t),
\end{equation*}
so we also have \(\abs{\lambda_1}^d\cdot g_-\circ\phi_1 = f_-\).
If \(\lambda < 0\) then \(\abs{\lambda_1}^d\cdot g_-\circ\phi_1 = f_+\) and hence
\begin{equation*}
\abs{\lambda_1}^d\cdot g_+(\phi_1(t))  
= (-1)^d\cdot \abs{\lambda_1}^d\cdot g_-(\phi_1(t))
= (-1)^d\cdot f_+(t)
= f_-(t),
\end{equation*}
so we also have \(\abs{\lambda_1}^d\cdot g_+\circ\phi_1 = f_-\).
By Proposition \ref{prop: characterization of beta-transitions}, 
\((\tilde\lambda,\tilde\phi)\) is a \(\beta\)-transform.\\

\noindent
{\bf Case 2.} \(r\) and \(s\) are both odd

In this case, we have \(f_-(t) = (-1)^d\cdot f_+(-t)\) and \(g_-(t) = (-1)^d\cdot g_+(-t)\).
In fact, 
\begin{equation*}
f_-(t) = \sum_{k = 0}^n (-1)^{d-rk}\cdot a_k\cdot t^{sk} 
= (-1)^d\cdot\sum_{k=0}^n (-1)^k a_kt^{sk} = (-1)^d\cdot f_+(-t).
\end{equation*}
The same reasoning, with \(f\) replaced by \(g\), gives the other equation.

By hypothesis, there exists a proto-transition \((\lambda,\phi)\) such that 
\((g_+,g_-)\circ(\lambda,\phi) = (f_+,f_-)\). We claim that 
\((g_+,g_-)\circ(\tilde\lambda,\tilde\phi) = (f_+,f_-)\), where 
\(\tilde\lambda = (\lambda_1, \lambda_1)\) and 
\(\tilde\phi(t) = (\phi_1(t),-\phi_1(-t))\).
In fact, if \(\lambda > 0\) then \(\abs{\lambda_1}^d\cdot g_+\circ\phi_1 = f_+\) and hence 
\begin{equation*}
\abs{\lambda_1}^d\cdot g_-(-\phi_1(-t))  
= (-1)^d\cdot \abs{\lambda_1}^d\cdot g_+(\phi_1(-t))
= (-1)^d\cdot f_+(-t)
= f_-(t),
\end{equation*}
so we also have \(\abs{\lambda_1}^d\cdot g_-(-\phi_1(-t)) = f_-(t)\).
If \(\lambda < 0\) then \(\abs{\lambda_1}^d\cdot g_-\circ\phi_1 = f_+\) and hence
\begin{equation*}
\abs{\lambda_1}^d\cdot g_+(-\phi_1(-t))  
= (-1)^d\cdot \abs{\lambda_1}^d\cdot g_-(\phi_1(-t))
= (-1)^d\cdot f_+(-t)
= f_-(t),
\end{equation*}
so we also have \(\abs{\lambda_1}^d\cdot g_+(-\phi_1(-t)) = f_-(t)\).
By Proposition \ref{prop: characterization of beta-transitions}, 
\((\tilde\lambda,\tilde\phi)\) is a \(\beta\)-transform.

\item Suppose that \(s\) is even. Then, we have
\(g_+(-t) = g_+(t)\) and \(g_-(-t) = g_-(t)\).
In fact,
\begin{equation*}
g_+(-t) = \sum_{k=0}^n b_k(-t)^{sk} = \sum_{k=0}^n b_k t^{sk} = g_+(t)
\end{equation*}
and\\
\begin{equation*}
g_-(-t) = \sum_{k=0}^n (-1)^{d-rk}\cdot b_k(-t)^{sk} 
= \sum_{k=0}^n (-1)^{d-rk}\cdot b_k t^{sk}
= g_-(t).
\end{equation*}

By hypothesis, there exist a proto-transition \((\lambda,\phi)\) such that
\((g_+,g_-)\circ (\lambda,\phi) = (f_+,f_-)\). We claim that 
\((g_+,g_-)\circ (\lambda,\overline{\phi}) = (f_+,f_-)\), where
\(\overline{\phi} = (\phi_1,-\phi_2)\). In fact, if \(\lambda > 0\) then 
\(\abs{\lambda_2}^d\cdot g_-\circ \phi_2 = f_-\) and hence 
\(\abs{\lambda_2}^d\cdot g_-\circ (-\phi_2) = f_-\)
(because \(g_-\) is an even function). If \(\lambda < 0\) then 
\(\abs{\lambda_2}^d\cdot g_+\circ \phi_2 = f_-\) and hence 
\(\abs{\lambda_2}^d\cdot g_+\circ (-\phi_2) = f_-\)
(because \(g_+\) is an even function). 

Finally, notice that \(\phi_1\) is coherent with either \(\phi_2\) or \(-\phi_2\). If \(\phi_1\) and \(\phi_2\) are coherent, we take \(\tilde\phi = \phi\). If \(\phi_1\) and \(-\phi_2\) are coherent, we take \(\tilde\phi = \overline{\phi}\).
\end{enumerate}
\end{proof}

\begin{theorem}
\label{thm: Sufficient conditions for R-semialg. Lip. equivalence}
Let \(F,G\) be \(\beta\)-quasihomogeneous polynomials of degree \(d\geq 1\), none of which being of the form \(cX^d\), and let \(f_+,f_-\) be the height functions of \(F\) and \(g_+,g_-\) the height functions of \(G\). Also, let \(\beta = r/s\), where \(r > s > 0\) and 
\(\gcd(r,s) = 1\). Suppose that
\((g_+,g_-)\circ(\lambda,\phi) = (f_+,f_-)\)
for some proto-transition \((\lambda,\phi)\). 
If any of the following conditions is satisfied then \(F\) and \(G\) are
\({\cal R}\)-semialgebraically Lipschitz equivalent:
\begin{enumerate}[label=(\alph*)]
\item \(r\) is even or \(s\) is odd
\item \(\lambda_1 = \lambda_2\)
\item None of the polynomials \(F,G\) has \(X\) as a factor.
\end{enumerate}
\end{theorem}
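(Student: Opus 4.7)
The strategy is a direct combination of Corollary \ref{cor: Consequences of the characterization of beta-transitions}, Proposition \ref{prop: characterization of beta-transitions}, and the construction of the inverse \(\beta\)-transform from Section \ref{section: beta-transitions and the inverse beta-transform}. Specifically, in each of the three cases I will upgrade the given proto-transition \((\lambda,\phi)\) to a \(\beta\)-transition \((\tilde\lambda,\tilde\phi)\) satisfying \((g_+,g_-)\circ(\tilde\lambda,\tilde\phi) = (f_+,f_-)\); then its inverse \(\beta\)-transform \(\Phi\) is, by the results of Section \ref{section: beta-transitions and the inverse beta-transform}, a germ of semialgebraic bi-Lipschitz map with \(G\circ\Phi = F\), yielding \(\mathcal{R}\)-semialgebraic Lipschitz equivalence.

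For case \((a)\), there is nothing more to do: Corollary \ref{cor: Consequences of the characterization of beta-transitions}\,(a) directly produces a \(\beta\)-transition \((\tilde\lambda,\tilde\phi)\) with \((g_+,g_-)\circ(\tilde\lambda,\tilde\phi) = (f_+,f_-)\), and the construction above concludes the proof. For cases \((b)\) and \((c)\), I may assume that case \((a)\) does not apply, i.e.\ that \(r\) is odd and \(s\) is even (otherwise case \((a)\) already gives the result). Under this assumption, Corollary \ref{cor: Consequences of the characterization of beta-transitions}\,(b) provides \(\tilde\phi = (\tilde\phi_1,\tilde\phi_2)\) with \(\tilde\phi_1\) and \(\tilde\phi_2\) coherent and \((g_+,g_-)\circ(\lambda,\tilde\phi) = (f_+,f_-)\), so condition \((i)\) of Proposition \ref{prop: characterization of beta-transitions} is met by \((\lambda,\tilde\phi)\).

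It remains to verify condition \((ii)\) of Proposition \ref{prop: characterization of beta-transitions} for \((\lambda,\tilde\phi)\). In case \((b)\), this is immediate from the hypothesis \(\lambda_1 = \lambda_2\), which is the second alternative of \((ii)\). In case \((c)\), the hypothesis that neither \(F\) nor \(G\) has \(X\) as a factor is precisely the first alternative of \((ii)\). In either case, Proposition \ref{prop: characterization of beta-transitions} guarantees that \((\lambda,\tilde\phi)\) is a \(\beta\)-transition. Taking \(\Phi\) to be its inverse \(\beta\)-transform, the final proposition of Section \ref{section: beta-transitions and the inverse beta-transform} gives \(G\circ\Phi = F\) and the corollary preceding it ensures that \(\Phi\) is a germ of semialgebraic bi-Lipschitz map, completing the proof.

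The main obstacle has already been absorbed into the preceding machinery: all the delicate work (constructing coherent representatives, matching \(\lambda_1\) and \(\lambda_2\), verifying that the inverse \(\beta\)-transform of a \(\beta\)-transition is bi-Lipschitz) lives in Corollary \ref{cor: Consequences of the characterization of beta-transitions} and in Section \ref{section: beta-transitions and the inverse beta-transform}. The proof of the theorem itself is thus essentially bookkeeping: verifying in each case that the two conditions of Proposition \ref{prop: characterization of beta-transitions} can simultaneously be arranged, so that the inverse \(\beta\)-transform is available.
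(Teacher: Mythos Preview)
Your proposal is correct and follows essentially the same approach as the paper's own proof: handle case (a) directly via Corollary \ref{cor: Consequences of the characterization of beta-transitions}(a), and for (b) and (c) reduce to the case \(s\) even, invoke Corollary \ref{cor: Consequences of the characterization of beta-transitions}(b) to obtain coherent \(\tilde\phi\), then apply Proposition \ref{prop: characterization of beta-transitions} and the inverse \(\beta\)-transform.
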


\begin{proof}
If \(r\) is even or \(s\) is odd then, by Corollary \ref{cor: Consequences of the characterization of beta-transitions}, there exists a \(\beta\)-transition
\((\tilde\lambda,\tilde\phi)\) such that
\((g_+,g_-)\circ(\tilde\lambda,\tilde\phi) = (f_+,f_-)\). 
Then \(G\circ \Phi = F\), where \(\Phi\) is the inverse \(\beta\)-transform of 
\((\tilde\lambda,\tilde\phi)\). 

Now, assume that either (b) or (c) holds. If \(s\) is odd, then condition (a) is satisfied, and therefore \(F\) and \(G\) are \({\cal R}\)-semialgebraically Lipschitz equivalent, as we have just proved. If \(s\) is even then, by Corollary \ref{cor: Consequences of the characterization of beta-transitions}, there exists \(\tilde\phi = (\tilde\phi_1,\tilde\phi_2)\), with \(\tilde\phi_1\) and 
\(\tilde\phi_2\) coherent, such that 
\((g_+,g_-)\circ(\lambda,\tilde\phi) = (f_+,f_-)\). 
Since we are assuming that either (b) or (c) holds, Proposition \ref{prop: characterization of beta-transitions} guarantees that 
\((\lambda,\tilde\phi)\) is a \(\beta\)-transition. Hence, \(G\circ\Phi = F\),
where \(\Phi\) is the inverse \(\beta\)-transform of \((\lambda,\tilde\phi)\).
\end{proof}

Next, we present an example in which condition (b) of Theorem \ref{thm: Sufficient conditions for R-semialg. Lip. equivalence} is satisfied whereas conditions (a) and (c) are not.\\

\begin{example}
Let\/ \(F(X,Y) = XY^4 + 8X^4Y^2 + 16X^7\) and\/ 
\(G(X,Y) = XY^4 + 18X^4Y^2 + 81X^7\).
These polynomials are both \(\beta\)-quasihomogeneous of degree \(7\), with \(\beta = 3/2\). In this example, \(r = 3\) and \(s = 2\), so condition (a) of Theorem \ref{thm: Sufficient conditions for R-semialg. Lip. equivalence} is not satisfied. Also, notice that \(X\) is a factor of both \(F\) and \(G\), so condition (c) is not satisfied either. Let us show that condition (b) is satisfied.

The height functions of \(F\) are \(f_+(t) = t^4 + 8t^2 + 16\) and 
\(f_-(t) = -t^4 + 8t^2 - 16\). \(f_+\) has only one critical point \(t = 0\), which has multiplicity \(2\) and is the point where \(f_+\) attains its minimum value: \(16\). \(f_-\) has three distinct critical points: \(-2,0,2\), and its multiplicity symbol is \(((0,-16,0),(2,2,2))\).

The height functions of \(G\) are \(g_+(t) = t^4 + 18t^2 + 81\) and 
\(g_-(t) = -t^4 + 18t^2 - 81\). \(g_+\) has only one critical point \(t = 0\), which has multiplicity \(2\) and is the point where \(g_+\) attains its minimum value: \(81\). \(g_-\) has three distinct critical points: \(-3,0,3\), and its multiplicity symbol is \(((0,-81,0),(2,2,2))\).

By Propositions \ref{prop: only one critical point} and \ref{prop: at least 2 critical points}, there exist bi-Lipschitz functions \(\phi_1,\phi_2\from\R\to\R\) such that
\begin{equation*}
g_+\circ\phi_1 = c f_+ \quad\text{and}\quad g_-\circ\phi_2 = c f_-,
\end{equation*}
where \(c = 81/16\). Thus, \((g_+,g_-)\circ(\lambda,\phi) = (f_+,f_-)\),
where \(\phi = (\phi_1,\phi_2)\) and \(\lambda = (c^{-1/7},c^{-1/7})\).
So condition (b) of Theorem \ref{thm: Sufficient conditions for R-semialg. Lip. equivalence} is satisfied, and therefore \(F\) and \(G\) are \({\cal R}\)-semialgebraically Lipschitz equivalent.
\end{example}

\section*{Acknowledgements} This preprint is part of my thesis, which is still a work in progress. I would like to thank my advisor, Prof. Alexandre Fernandes, and also Prof. Lev Birbrair, Prof. Vincent Grandjean, and Prof. Maria Michalska for the continuous support of my PhD studies and research. Besides, I would like to thank them all for their insightful comments and suggestions. Finally, I would like to gratefully acknowledge that throughout my PhD studies I have been supported by a Study Leave from Universidade Estadual de Santa Cruz and I would also like to acknowledge the financial support provided by Funcap (Fundação Cearense de Amparo ao Desenvolvimento Científico e Tecnológico).

\vspace{15pt}
\noindent
S. Alvarez\\
Departamento de Ciências Exatas e Tecnológicas\\
Universidade Estadual de Santa Cruz\\
Campus Soane Nazaré de Andrade, Rodovia Jorge Amado, km 16, Bairro Salobrinho\\
CEP 45662-900. Ilhéus-Bahia\\ 
Brasil\\[5pt]
sergio.qed@gmail.com

\end{document}